\theoremstyle:=definition,remark,plain\do{%
        \expandafter\g@addto@macro\csname th@\theoremstyle\endcsname{%
            \addtolength\thm@preskip\parskip
            }%
        }
\theoremstyle{plain}
\newtheorem{theorem}{Theorem}[section]
\newtheorem{lemma}[theorem]{Lemma}
\theoremstyle{definition}
\theoremstyle{remark}
\newtheorem{remark}[theorem]{Remark}
\theoremstyle{plain}
\newtheorem{proposition}[theorem]{Proposition}
\theoremstyle{plain}
\theoremstyle{remark}
\newtheorem{example}[theorem]{Example}
\newcommand{\Mlog}{Y}%Magnus logarithm
\newcommand \s {{\sigma}}
\newcommand \R {\mathbb{R}}
\DeclarePairedDelimiter\floor{\lfloor}{\rfloor}
\newcommand\Eb{\mathbb{E}}
\newcommand\Lc{\mathscr{L}}
\newcommand\Qc{\mathscr{Q}}
\newcommand{\Matrix}{\Sigma}
\newcommand{\eps}{\varepsilon}
\newcommand{\e}{\varepsilon}
\newcommand\abf{{\bf a}}
\newcommand\bbf{{\bf b}}
\newcommand\cbf{{\bf c}}
\newcommand\gbf{{\bf g}}
\newcommand\Lbf{{\bf L}}
\newcommand\Gbf{{\bf G}}
\newcommand\sbf{{\mathbf{ \s}}}
\newcommand{\norm}[1]{\left\|{#1}\right\|}
\newcommand{\dd}{d}
\newcommand\dD{\Lc}
\newcommand\dDD{\Qc}
\renewcommand \t {\tau}
\renewcommand \O {\Omega}
\renewcommand \o {\omega}
\renewcommand \d {\delta}
\renewcommand \s {\sigma}
\newcommand{\N}{\mathbb{N}}
\renewcommand{\phi}{\varphi}
\renewcommand{\tilde}{\widetilde}
\renewcommand\l {\lambda}
\newcommand \F {\mathcal{F}}
\newcommand{\Md}{\mathcal{M}^{d\times d}}
\newcommand{\ad}{\text{ad}}
\newcommand{\comm}[2]{\left[#1,#2\right]}
\newcommand{\bern}{\beta}
\newcommand{\abs}[1]{\left|#1\right|}
\newcommand{\fnorm}[1]{\|{#1}\|_{F}}
\newcommand{\fnormp}[1]{\|{#1}\|_{F}^{2p}}
\newcommand{\snorm}[1]{\left\|{#1}\right\|}
\newcommand{\enorm}[1]{\left|{#1}\right|}
\newcommand{\mainfolder}{Figures}
\newcommand{\BfixCDSeuler}{
    \includegraphics[width=\textwidth]{%
        \mainfolder/B0_fix_T0.75_d2_N76_Neuler7501_M1000/error_plot_1-eps-converted-to.pdf}
}
\newcommand{\BfixCDSmone}{
    \includegraphics[width=\textwidth]{%
        \mainfolder/B0_fix_T0.75_d2_N76_Neuler7501_M1000/error_plot_2-eps-converted-to.pdf}
}
\newcommand{\BfixCDSmtwo}{
    \includegraphics[width=\textwidth]{%
        \mainfolder/B0_fix_T0.75_d2_N76_Neuler7501_M1000/error_plot_3-eps-converted-to.pdf}
}
\newcommand{\BfixCDSmthree}{
    \includegraphics[width=\textwidth]{%
        \mainfolder/B0_fix_T0.75_d2_N76_Neuler7501_M1000/error_plot_4-eps-converted-to.pdf}
}
\newcommand{\BvarTrajectory}{
        \includegraphics[width=\columnwidth]{%
            \mainfolder/B0_var_T10_d2_N1001_Neuler100001_M1000/plot_6-eps-converted-to.pdf}
}
\newcommand{\BvarCDSeuler}{
    \includegraphics[width=\textwidth]{%
        \mainfolder/B0_var_T0.75_d2_N76_Neuler7501_M1000/error_plot_1-eps-converted-to.pdf}
}
\newcommand{\BvarCDSmone}{
    \includegraphics[width=\textwidth]{%
        \mainfolder/B0_var_T0.75_d2_N76_Neuler7501_M1000/error_plot_2-eps-converted-to.pdf}
}
\newcommand{\BvarCDSmtwo}{
    \includegraphics[width=\textwidth]{%
        \mainfolder/B0_var_T0.75_d2_N76_Neuler7501_M1000/error_plot_3-eps-converted-to.pdf}
}
\newcommand{\BvarCDSmthree}{
    \includegraphics[width=\textwidth]{%
        \mainfolder/B0_var_T0.75_d2_N76_Neuler7501_M1000/error_plot_4-eps-converted-to.pdf}
}
\newcommand{\ABconstTrajectory}{
        \includegraphics[width=\columnwidth]{%
            \mainfolder/AB_const_T0.75_d2_N76_Neuler7501_M1000/plot_1-eps-converted-to.pdf}
}
\newcommand{\ABconstCDSmone}{
    \includegraphics[width=\textwidth]{%
        \mainfolder/AB_const_T0.75_d2_N76_Neuler7501_M1000/error_plot_1-eps-converted-to.pdf}
}
\newcommand{\ABconstCDSmtwo}{
    \includegraphics[width=\textwidth]{%
        \mainfolder/AB_const_T0.75_d2_N76_Neuler7501_M1000/error_plot_2-eps-converted-to.pdf}
}
\newcommand{\ABconstCDSmthree}{
    \includegraphics[width=\textwidth]{%
        \mainfolder/AB_const_T0.75_d2_N76_Neuler7501_M1000/error_plot_3-eps-converted-to.pdf}
}
\begin{document}
\newcommand{\headlogo}{
\vspace{-5\headheight}
\raisebox{2pt}{
\includegraphics[width=.1\textwidth]{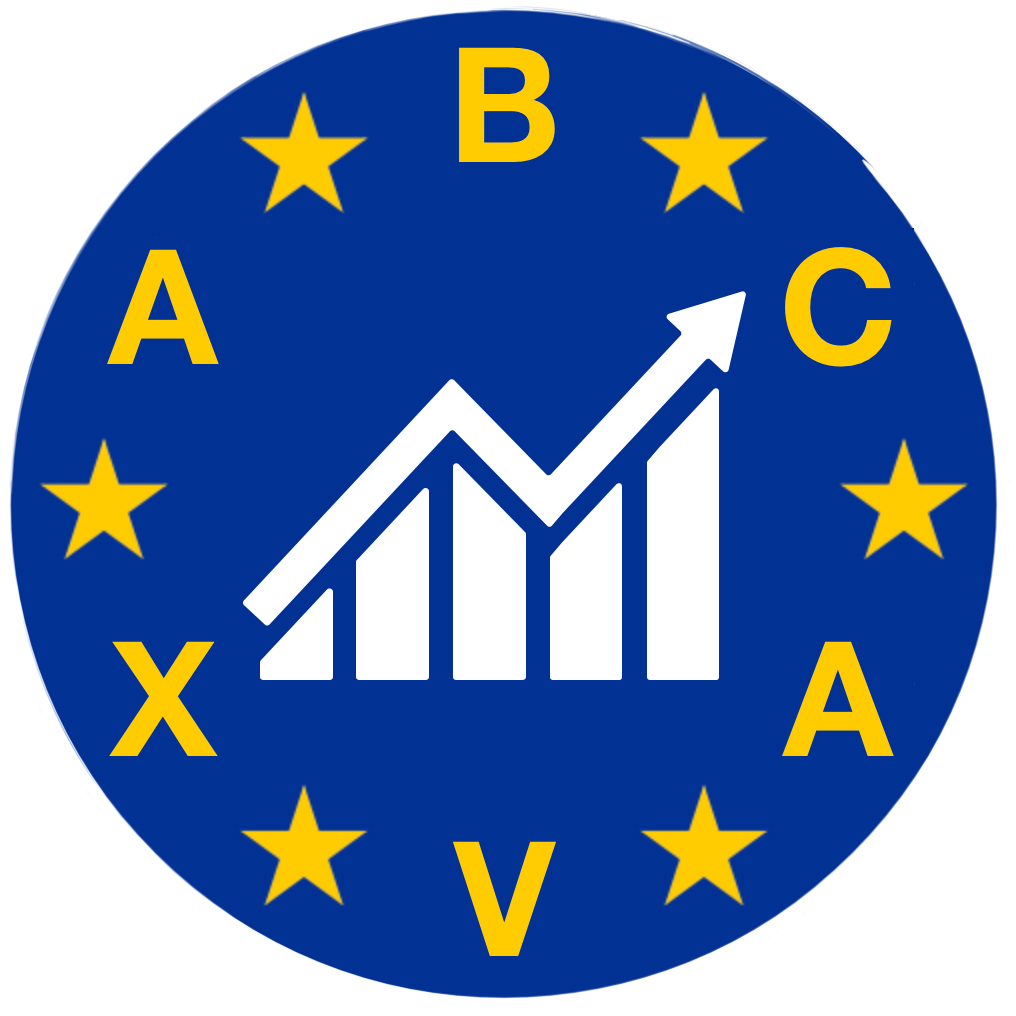}
}
\hfill
\mbox{
\includegraphics[trim=5.1cm 11cm 5.1cm 12cm,clip, width=.2\textwidth]{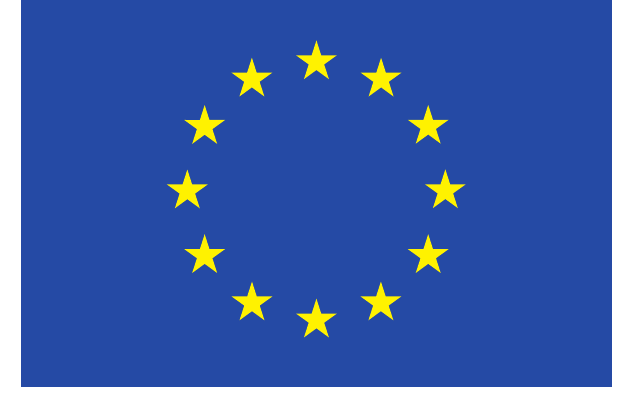}
} \hspace{-1.5em}}

%\title{Magnus expansion for matrix-valued It\^o SDEs, and application to parabolic SPDEs}
\title{{\headlogo}\\
{On the stochastic Magnus expansion and its application to SPDEs}}% and applications to SPDEs}

\author{Kevin Kamm\thanks{Dipartimento di Matematica, Universit\`a di Bologna, Bologna, Italy.
\textbf{e-mail}: kevin.kamm@unibo.it} \and Stefano Pagliarani\thanks{Dipartimento di Matematica,
Universit\`a di Bologna, Bologna, Italy. \textbf{e-mail}: stefano.pagliarani9@unibo.it} \and
Andrea Pascucci\thanks{Dipartimento di Matematica, Universit\`a di Bologna, Bologna, Italy.
\textbf{e-mail}: andrea.pascucci@unibo.it}}

\date{This version: \today}
\maketitle
\begin{abstract}
We derive the stochastic version of the Magnus expansion for linear systems of
stochastic differential equations (SDEs). The main novelty with respect to the related literature
is that we consider SDEs in the It\^o sense, with progressively measurable coefficients, for which
an explicit It\^o-Stratonovich conversion is not available. We prove convergence of the Magnus
expansion up to a stopping time $\t$ and provide a novel asymptotic estimate of the cumulative
distribution function of $\t$. As an application, we propose a new method for the numerical
solution of stochastic partial differential equations (SPDEs) based on spatial discretization and
application of the stochastic Magnus expansion. A notable feature of the method is that it is
fully parallelizable. We also present numerical tests in order to asses the accuracy of the
numerical schemes.
\end{abstract}

\noindent \textbf{Keywords}: Magnus expansion, matrix-valued SDEs, stochastic linear systems, numerical solution of SPDEs \\ \noindent
\textbf{Acknowledgements}: This project has received funding from the European Union’s Horizon 2020 research and innovation
programme under the Marie Sklodowska-Curie grant agreement No 813261 and is part of the ABC-EU-XVA project.
\section{Introduction}
\label{sec:intro} The Magnus expansion (hereafter referred to as ME) is a classical tool to solve
non-autonomous linear differential equations. Generalizations of the ME to {\it Stratonovich SDEs}
are well-known and were proposed by several authors (see for instance \cite{BenArous},
\cite{MR2494199}, \cite{burrage1999high}, \cite{Wang} and the references given in Section
\ref{refer}). In this paper we derive, for the first time to the best of our knowledge, the ME for
{\it It\^o SDEs} under general assumptions which do not guarantee an explicit It\^o-Stratonovich
conversion, namely progressively measurable stochastic coefficients. Our main results are the
convergence of the stochastic ME up to a stopping time $\t$ and a novel asymptotic estimate of the
cumulative distribution function of $\t$. The latter improves some previous estimates obtained in
purely Markovian settings and is based on an application of Morrey's inequality. We also explore
possible applications to the numerical solution of stochastic partial differential equations
(SPDEs).

%We derive a Magnus-type expansion for
Let $d,q\in\N$ and consider the linear matrix-valued It\^o SDE
\begin{equation}\label{eq:SDE_linear_b}
\begin{cases}
 \dd X_t = B_t X_t dt + %\sum_{j=1}^q
 A^{(j)}_t X_t \dd W^j_t,\\ % \qquad t>0 \\
 X_0 = I_d,
\end{cases}
\end{equation}
with $A^{(1)},\dots,A^{(q)},B$ being real $(d\times d)$-matrix-valued bounded {stochastic}
processes, $I_d$ the identity $(d\times d)$-matrix and $W=(W^1,\dots,W^q)$ a $q$-dimensional
standard Brownian motion. In \eqref{eq:SDE_linear_b}, as well as anywhere throughout the paper,
{we use Einstein summation convention to imply summation of terms containing $W^{j}$, over the
index $j$ from $1$ to $q$.}
%\red{\bf [Lo vogliamo proprio mettere?] }{\blue The solution to
%\eqref{eq:SDE_linear_b} can be seen as the fundamental solution of the analogous vector-valued
%linear SDE, in that the process defined by $Z_t = \red{X_{t}} z$ with $z$ column vector of $\R^d$
%is the solution to
%\begin{equation}
%\begin{cases}
% \dd Z_t = B_t \red{Z_t} dt + %\sum_{j=1}^q
% A^{(j)}_t Z_t \dd W^j_t,\\ % \qquad t>0 \\
% Z_0 = z.
%\end{cases}
%\end{equation}
%}

In the deterministic case, i.e. $A^{(j)}\equiv 0$, $j=1,\dots,q$, \eqref{eq:SDE_linear_b} reduces
to the matrix-valued ODE
\begin{equation}\label{ode}
 \begin{cases}
  \frac{d}{dt}X_t =B_t X_t,\\ %   , \qquad t>0 \\
  X_0 = I_d,
 \end{cases}
\end{equation}
which admits an explicit solution, in terms of matrix exponential, in the time-homoge-neous case. Namely, if $B_t\equiv B$, the unique solution to \eqref{ode} reads as
\begin{equation}
X_t = e^{tB}, \qquad t\geq 0.
\end{equation}
However, in the non-autonomous case, the ODE \eqref{ode} does not admit an explicit solution. In
particular, if $B_t$ is not constant, the solution $X_t$ typically differs from $e^{\int_0^t B_s
\dd s}$. This is due to the fact that, in general, $B_t$ and $B_{s}$ do not commute for $t\neq s$.
As it turns out, a representation of the solution in terms of {a} matrix exponential is still
possible, at least for short times, i.e.
\begin{equation}\label{eq:exponential}
X_t = e^{Y_t}, %\qquad \text{}.
\end{equation}
for $t\geq 0$ suitably small and $Y_t$ real valued $(d\times d)$-matrix.
Moreover, $Y$ %the matrix-valued function at the exponent
admits a semi-explicit expansion as a series of iterated integrals involving nested Lie
commutators of the function $B$ at different times. Such representation is known as \emph{Magnus
expansion} (\cite{MR67873}) and its first terms read as
\begin{align}\label{eq:maguns_det}
Y_t = \int_0^t B_s \dd s &+ \frac{1}{2} \int_0^t  \dd s \int_0^s [B_s,B_u] \dd u \\&+ \frac{1}{6} \int_0^t  \dd s \int_0^s  \dd u \int_0^u \big(  \big[B_s,[B_u,B_r]\big]
+\big[B_r,[B_u,B_s]\big] \big) \dd r  + \cdots ,
\end{align}
{where $\comm{A}{B}\coloneqq AB-BA$ denotes the Lie commutator.}
{The ME has a wide range of physical applications and the related literature has grown
increasingly over the last decades (see, for instance, the excellent survey paper \cite{MR2494199}
and the references given therein).}

In the stochastic case, when $j=1$, $B_t\equiv 0$ and $A$ is constant, i.e. $A_t(\omega)\equiv A$, {the}
It\^o equation \eqref{eq:SDE_linear_b} reduces to
\begin{equation}
\begin{cases}
 \dd X_t = A X_t \dd W_t,\\ % \qquad t>0 \\
 X_0 = I_d,
\end{cases}
\end{equation}
whose explicit solution can be easily proved to be of the form \eqref{eq:exponential}, with
\begin{equation}
 Y_t = -\frac{1}{2} A^2 t + A W_t , \qquad t\geq 0.
\end{equation}
%when either $j>1$, $B\neq 0$, or $A$ is not constant,
In general, when the matrices $A^{(j)}_t, A^{(j)}_{s},B_t,B_{s}$ with $t\neq s$ do not commute, an
explicit solution to \eqref{eq:SDE_linear_b} is not known. For instance, {in the non-commutative case,} neither the equation
\begin{equation}\label{eq:example1}
\begin{cases}
 \dd X_t =  B \dd t + A X_t \dd W_t,\\ % \qquad t>0 \\
 X_0 = I_d,
\end{cases}
\end{equation}
nor the equation
\begin{equation}\label{eq:example2}
\begin{cases}
 \dd X_t = A_t X_t \dd W_t,\\ % \qquad t>0 \\
 X_0 = I_d,
\end{cases}
\end{equation}
admit an explicit solution{, save some particular cases (see for instance the example in Section \ref{subsec:Auppertriang}). %Therefore, the problem of finding approximate solutions to \eqref{}, including stochastic versions of the Magnus expansion \eqref{eq:exponential}-\eqref{eq:maguns_det}.
Among the approximation tools that were developed in the literature to solve stochastic
differential equations, including \eqref{eq:SDE_linear_b}, some Magnus-type expansions that extend
\eqref{eq:exponential}-\eqref{eq:maguns_det} were derived in different contexts. We now go on to
describe our contribution to this stream of literature, and then to firm our results within the
existing ones. In particular, a detailed comparison with existing stochastic MEs previously
derived by other authors will be provided below, in the last subsection.}

\subsection{Description of the main results.} In this paper we derive a {Magnus}-type
representation formula for the solution to {the It\^o SDE} \eqref{eq:SDE_linear_b}, which is
\eqref{eq:exponential} together with
\begin{equation}\label{eq:magnusY}
Y_t = Y^{(1)}_t + Y^{(2)}_t + Y^{(3)}_t + \cdots
%\sum_{n=1}^{\infty} Y^{(n)}_t
\qquad t\in [0,\tau],
\end{equation}
for $\t$ suitably small, strictly positive stopping time. {In analogy with the deterministic ME,
the general term $Y^{(n)}$ can be expressed recursively, and contains iterated stochastic
integrals of nested Lie commutators of the processes $B,A^{(j)}$ at different times.}

In the case $j=1$, the first two terms of {the} expansion read as
\begin{align}
Y^{(1)}_t &=  \int_0^t
B_s
\dd s  +  \int_0^t  A_s  \dd W_s ,  \\
Y^{(2)}_t &= \frac{1}{2}
                \int_{0}^{t}{ \bigg(  \Big[  B_s, \int_{0}^s B_u \dd u + \int_{0}^s A_u \dd W_u \Big] -  A^2_s
                \bigg)  \dd s}
\\&\quad+
\frac{1}{2}
                \int_{0}^{t}{ \Big[  A_s, \int_{0}^s B_u \dd u + \int_{0}^s A_u \dd W_u \Big]  \dd W_s} .
\end{align}
For example, in the case of SDE \eqref{eq:example1} the latter can be reduced to
\begin{align}
Y^{(1)}_t &=
Bt+AW_t  ,  \\
Y^{(2)}_t &=  [{A},{B}]\left(
                    \frac{1}{2}tW_t-
                    \int_{0}^{t}{W_s ds} \right)
                    -\frac{1}{2}A^2 t                  .
\end{align}
Notice that the last expressions do not contain stochastic integrals. In fact, in the general
autonomous case, {and if $j=1$}, {all the iterated} stochastic integrals in $Y^{(n)}$ can be solved for any $n$ {(see Corollary 5.2.4 in \cite{KloedenPlaten})}. Therefore, in this case the
expansion becomes numerically computable by only approximating Lebesgue integrals, as opposed to
 {stochastic Runge-Kutta} schemes, which typically
require the numerical approximation of stochastic integrals. As we shall see in the numerical
tests in Section \ref{sec:num_tests}, this feature allows {us} to choose a sparser time-grid in order
to save computation time. This feature is also preserved in some non-autonomous cases as
illustrated in Section \ref{sec:num_tests}.

A notable feature of the expansion is the possibility of parallelizing the computation of its
terms. In contrast to standard iterative methods, which require the solution at a given time-step
in order to go through the next step in the iteration, the discretization of the integrals in the
terms $Y^{(n)}$ can be done simultaneously for all the time steps. Conclusively, this entails the
possibility of parallelizing {\it over all times} in the time-grid and makes the numerical
implementation of the stochastic ME perfectly GPU applicable.

As it often happens when deriving convergent (either asymptotically or absolutely) expansions, a
formal derivation precedes the rigorous result: {that is what we do for }
\eqref{eq:exponential}-\eqref{eq:magnusY} in Section \ref{sec:formal_expansion}. Just like the
derivation of the deterministic ME relies on the possibility of writing the logarithm $Y$ as the
solution to an ODE, in the stochastic case the first step consists in representing $Y$ as the
solution to an SDE. Such representation of $Y$ will be more involved compared to the deterministic
case {because of} the presence of the second order derivatives of the exponential map {coming
from} the application of It\^o{'s} formula. {This is a distinctive feature of our derivation with
respect to other analogous results in the Stratonovich setting where the standard chain-rule
applies.} With the SDE representation for $Y$ at hand, the expansion \eqref{eq:magnusY} stems,
like in the deterministic case, from applying a Dyson-type perturbation procedure to the SDE
solved by $Y$.

In the deterministic case, the convergence of the ME \eqref{eq:maguns_det} to the exact logarithm
of the solution to \eqref{ode} was studied by several authors, who proved progressively sharper
lower bounds on the maximum $\bar{t}$ such that the convergence {to the exact solution} is assured
for any $t\in [0,\bar{t}]$. At the best of our knowledge, the sharpest estimate was given in
\cite{MR2413145}, namely
\begin{equation}\label{eq:converg_cond_determ}
\bar{t} \geq \sup \bigg\{ t\geq 0 \mid \int_{0}^t  \snorm{ B_s }  \dd s < \pi  \bigg\},
\end{equation}
{where $\snorm{ B_s}$ denotes the {spectral norm.}} Note that the existence of a real logarithm
 of $X_t$ is an issue that underlies the study of the convergence of the ME. We state here our main result, proved in Section \ref{sec:convergence}, which deals
with these matters in the stochastic case{, when the coefficients $B,A^{(j)}$ in
\eqref{eq:SDE_linear_b} are progressively measurable processes. We defer a comparison with
previous convergence results for stochastic Magnus-type expansions to the next subsection}.
 We denote by $\Md$ the space of the $(d\times d)$-matrices
with real entries. {Also, for an $\Md$-valued stochastic process $M=(M_t)_{t\in[0,T]}$, we set
$\norm{M}_{T}\coloneqq\norm{\fnorm{M}}_{L^{\infty}([0,T]\times \O)}$, where $\fnorm{\cdot}$ denotes the Frobenius
(Euclidean entry-wise) norm.}
\begin{theorem}\label{th:convergence}
Let $A^{(1)},\dots,A^{(q)}$ and $B$ be bounded, progressively measurable, $\Md$-valued processes
defined on a filtered probability space $(\Omega,\mathcal{F},P,(\mathcal{F}_t)_{t\ge0})$ equipped
with a standard $q$-dimensional Brownian motion $W=(W^1,\cdots, W^q)$. For $T>0$ let also
$X=(X_t)_{t\in[0,T]}$ be the unique strong solution to \eqref{eq:SDE_linear_b} (see Lemma \ref{lem:preliminary}). There exists a strictly positive
stopping time $\tau\le T$ such that:
\begin{itemize}
\item[(i)] $X_t$ has a real logarithm $Y_t\in \Md$ up to time $\tau$, i.e.
\begin{equation}\label{eq:logarith_real_2}
 X_t = e^{Y_t},\qquad 0\le t<\tau;
\end{equation}
\item[(ii)] the following representation holds $P$-almost surely:
\begin{equation}\label{eq:convergence}
Y_t = \sum_{n=0}^{\infty} Y^{(n)}_t,\qquad 0\le t<\tau,
\end{equation}
where $Y^{(n)}$ is the $n$-th term in the stochastic ME as defined in \eqref{eq:stoch_magnus_exp} and \eqref{eq:represent_Y_general}--\eqref{eq:mu_general};
\item[(iii)] there exists a positive constant $C$, {only dependent on $\|A^{(1)}\|_{T},\dots,\|A^{(q)}\|_{T}$, $\|B\|_{T}$, $T$ and $d$}, such that
\begin{equation}\label{eq:estimate_tau_conv_b}
 P (\tau \leq t) \leq C t,\qquad t\in[0,T].
\end{equation}
\end{itemize}
\end{theorem}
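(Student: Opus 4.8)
The plan is to establish the three claims by reducing the stochastic problem to a pathwise analysis of the matrix exponential/logarithm map, combined with moment estimates for the solution $X$ and an application of Morrey's inequality to control the modulus of continuity of $X_t - I_d$. The central observation is that parts (i) and (ii) are essentially \emph{local} statements: they hold on the (stochastic) event that $X_t$ stays close enough to the identity $I_d$ for the principal matrix logarithm $\log$ to be well-defined and real-analytic, and for the formal expansion derived in Section \ref{sec:formal_expansion} to converge. Thus I would \emph{define} the stopping time $\tau$ precisely as the first time $X_t$ exits a suitable neighbourhood of $I_d$, for instance
\begin{equation}
 \tau \coloneqq \inf\{ t\in[0,T] : \fnorm{X_t - I_d} \ge \rho \} \wedge T,
\end{equation}
where $\rho\in(0,1)$ is a fixed radius small enough that on $\{\fnorm{M-I_d}<\rho\}$ the logarithm $\log M = \sum_{k\ge1} \frac{(-1)^{k-1}}{k}(M-I_d)^k$ converges absolutely, is real, and inverts the exponential map. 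Since $X_0 = I_d$ and $X$ has continuous paths, $\tau$ is strictly positive almost surely, and parts (i) and (ii) then follow on $[0,\tau)$ from the pathwise convergence of the Dyson-type series established earlier, together with the defining property $X_t = \exp(\log X_t)$.

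The main work, and the heart of statement (iii), is the quantitative estimate $P(\tau\le t)\le Ct$. By the definition of $\tau$,
\begin{equation}
 P(\tau \le t) = P\Big( \sup_{s\in[0,t]} \fnorm{X_s - I_d} \ge \rho \Big),
\end{equation}
so I would bound the probability that $X$ travels a distance $\rho$ from $I_d$ within time $t$. The naive route via Markov/Doob applied to $\Eb[\sup_{s\le t}\fnorm{X_s-I_d}^{2p}]$ gives, using the SDE \eqref{eq:SDE_linear_b} together with the Burkholder--Davis--Gundy and Jensen inequalities and the boundedness of the coefficients, an estimate of order $t^{p}$ (the drift contributes $t$, the martingale part contributes $\sqrt{t}$ to the increment, so the dominant scaling of $\fnorm{X_s-I_d}$ is $\sqrt{t}$ and its $2p$-th moment scales like $t^{p}$). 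Choosing $p$ large would then give $P(\tau\le t)\le C t^{p}$, which is \emph{stronger} than linear for small $t$ but the standard supremum bound is not sharp enough at the level of controlling the full path modulus uniformly. This is where Morrey's inequality enters, as advertised in the introduction: I would view $s\mapsto X_s$ as a Sobolev function on $[0,t]$ and use the Morrey embedding $W^{1,2p}\hookrightarrow C^{0,1-1/(2p)}$ to convert an $L^{2p}$ bound on the ``derivative'' of the path into a pathwise Hölder estimate on its oscillation, thereby bounding $\sup_{s\le t}\fnorm{X_s-I_d}$ in terms of integrated moments without losing a factor through Doob's inequality in dimension.

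Concretely, the key step will be to write, for $0\le s\le t$,
\begin{equation}
 X_s - I_d = \int_0^s B_u X_u\, \dd u + \int_0^s A^{(j)}_u X_u\, \dd W^j_u,
\end{equation}
obtain a uniform moment bound $\sup_{s\le T}\Eb[\fnorm{X_s}^{2p}]\le K$ from Lemma \ref{lem:preliminary} and Gr\"onwall, and then estimate the $W^{1,2p}([0,t])$-norm (in the Sobolev sense appropriate to the stochastic increments) of the path, applying Morrey's inequality to deduce
\begin{equation}
 \Eb\Big[ \sup_{s\in[0,t]} \fnorm{X_s - I_d}^{2p} \Big] \le C\, t^{p-1} \int_0^t \big(\|B\|_T^{2p} + \textstyle\sum_j \|A^{(j)}\|_T^{2p}\big)\, \Eb[\fnorm{X_u}^{2p}]\, \dd u \le \tilde C\, t^{p}.
\end{equation}
Markov's inequality then yields $P(\tau\le t)\le \tilde C\, \rho^{-2p}\, t^{p}$, and since $t\in[0,T]$ we bound $t^{p}\le T^{p-1}t$ for $p\ge1$ to obtain the stated linear bound $P(\tau\le t)\le Ct$ with $C$ depending only on $\|A^{(j)}\|_T$, $\|B\|_T$, $T$, and $d$ (the dimension dependence entering through the Morrey constant and the equivalence of matrix norms). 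The hard part will be setting up the Morrey/Sobolev argument rigorously for the stochastic integral term so that the supremum over $s$ is genuinely controlled pathwise rather than only in expectation at fixed times; getting the constant $C$ to depend on the coefficients only through the asserted norms, and not on finer path properties, is the delicate bookkeeping that makes (iii) uniform.
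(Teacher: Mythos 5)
Your definition of $\tau$ as the first exit time of the single solution $X$ from a ball around $I_d$ is where the proposal breaks down, because it cannot support part (ii). The convergence of the Magnus series is the heart of the theorem, and it is not something ``established earlier'': the derivation in Section \ref{sec:formal_expansion} is explicitly formal. The paper's actual mechanism is complex-analytic. One considers the whole family $X^{\eps,\delta}$ solving \eqref{eq:SDE_linear_eps} for \emph{complex} parameters, proves (Lemma \ref{lem:preliminary}) that $(\eps,\delta)\mapsto X^{\eps,\delta}_t$ is holomorphic, and defines $\tau$ so that $\fnorm{X^{\eps,\delta}_s-I_d}<1-e^{-\pi}$ holds \emph{uniformly over all} $(\eps,\delta)$ in a complex ball $B_{h}(0)$ of radius $h>1$. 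On that event the logarithm $Y^{\eps,\delta}_t$ is well defined and holomorphic on $B_{h}(0)$, so its Taylor series at $(0,0)$ converges absolutely at $(\eps,\delta)=(1,1)$; Baker's Lemma \ref{lem:Baker} (which needs $\snorm{Y^{\eps,\delta}_t}<\pi$ --- this is exactly why the radius $1-e^{-\pi}$ appears, via \eqref{estimate_log}) together with Lemma \ref{lem:verification} then identifies the Taylor coefficients with the recursively defined $Y^{(r,n-r)}$. With your $\tau$, which constrains only $X^{1,1}$, there is no control of the parametrized family, no holomorphy statement, and hence no argument at all that $\sum_n Y^{(n)}_t$ converges, let alone to $\log X_t$; the ``pathwise convergence of the Dyson-type series'' you invoke is precisely the unproved claim.

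The misplaced stopping time also inverts the role of Morrey's inequality in part (iii). For your single-path $\tau$, no Morrey argument is needed or even possible: $P(\sup_{s\le t}\fnorm{X_s-I_d}\ge\rho)$ is bounded by Markov's inequality plus the supremum moment estimate \eqref{eq:estimates_Xepsdel} (BDG/Doob already control the time-supremum inside the expectation), which gives $C t^p\rho^{-2p}\le C T^{p-1}\rho^{-2p}\, t$; whereas your plan to view $s\mapsto X_s$ as an element of $W^{1,2p}([0,t])$ in the \emph{time} variable fails outright, since the It\^o integral term has paths that are nowhere differentiable and of unbounded variation, so they do not belong to any $W^{1,q}$ in time. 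In the paper, Morrey is applied in the \emph{parameter} variables: the quantity to control is $M_t=\sup_{(\eps,\delta)\in B_{h}(0)}\max_{s\le t}\fnorm{X^{\eps,\delta}_s-I_d}$, a supremum over an uncountable family indexed by $B_{h}(0)\subset\mathbb{R}^4$, and one shows that $f_t(\eps,\delta)=\max_{s\le t}\fnorm{X^{\eps,\delta}_s-I_d}$ lies in $W^{1,2p}(B_{h}(0))$ with $E\big[\|f_t\|^{2p}_{W^{1,2p}(B_{h}(0))}\big]\le Ct^p$, using \eqref{eq:estimates_Xepsdel} and the derivative estimates \eqref{eq:derestimates_Xepsdel}; Morrey's inequality on $B_{h}(0)$ (with $2p>4$) then gives $M_t\le c_0\|f_t\|_{W^{1,2p}(B_{h}(0))}$ pointwise in $\omega$, and H\"older's inequality yields $E[M_t^2]\le ct$. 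So the linear bound you would obtain is correct but for the wrong (too small) object: it bounds the exit time of one path, not the stopping time up to which the series representation of part (ii) actually holds.
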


The proof of (i) relies on the continuity of $X$ together with a standard representation for the
matrix logarithm. The key point in the proof of (ii) consists in
showing that $X^{\eps,\delta}_t$ {and its logarithm $Y^{\eps,\delta}_t$} are holomorphic as
functions of $(\eps,\delta)$, where $X^{\eps,\delta}_t$ represents the solution of
\eqref{eq:SDE_linear_b} when $A^{(j)}$ and $B$ are replaced by $\eps A^{(j)}$ and $\delta B$,
respectively. Once this is established, the representation \eqref{eq:convergence} follows from
observing that, by construction, the series in \eqref{eq:convergence} is exactly the formal power
series of $Y^{\eps,\delta}_t$ at $(\eps,\delta)=(1,1)$. To prove the {holomorphicity} of
$X^{\eps,\delta}_t$ we follow the same approach typically adopted to prove regularity properties
of stochastic flows.
Namely, in Lemma \ref{lem:preliminary} we {state} some %non-standard
maximal $L^p$ and H\"older estimates
(with respect {to} the parameters) for solutions to SDEs with random coefficients and combine them
with {the} Kolmogorov continuity theorem. Finally, the proof of (iii) owes one more time to the $L^p$
estimates in Lemma \ref{lem:preliminary} and to a Sobolev embedding theorem to obtain pointwise
estimates w.r.t. the parameters $(\eps,\delta)$ above.

{Theorem \ref{th:convergence} has been used in the recent paper \cite{Yang2021} (cf.
Lemma 1) %for a special case used to provide convergence of the Magnus expansion. In \cite{Yang2021}
where a semi-linear non-commuative It\^{o}-SDEs is studied and Euler, Milstein and derivative-free
numerical schemes are developed, with a convergence analysis for those schemes.}

In the last part of the paper we perform numerical tests with the Magnus expansion. In particular, Section \ref{sec:par_spdes} is devoted to the application of the
stochastic ME to the numerical solution of parabolic stochastic partial differential equations
(SPDEs). The idea is to discretize the SPDE {\it only in space} and then approximate the resulting
linear matrix-valued SDE by truncating the series in
\eqref{eq:logarith_real_2}-\eqref{eq:convergence}. The goal here is to propose the application of
stochastic MEs as novel approximation tools for SPDEs; we study the error of this approximating
procedure only numerically, in a case where an explicit benchmark is available, and we defer the
theoretical error analysis to further studies.

\subsection{Review of the literature and comparison.}\label{refer} Stochastic generalizations of the MEs were
{proposed by several authors.
%\sout{not unexplored in the existingliterature.}
} To the best of our knowledge, we recognize mainly two streams of research.

The beginning of the first one can be traced back to the work \cite{BenArous}, where the author derived
\emph{exponential} stochastic Taylor expansions (see also \cite{Azencott}, \cite{KloedenPlaten}
for general stochastic Taylor series) of the solution of a system
of Stratonovich SDEs with values on a manifold $\mathcal{M}$, i.e.
\begin{equation}\label{eq:sde_benarous}
\begin{cases}
\dd X_t = B(X_t) \dd t + A^{(j)}(X_t) \circ \dd W^{j}_t,  \\
 X_0 = x_0,
\end{cases}
\end{equation}
with $B,A^{(j)}%\in C^{\infty}(M).
$ being smooth, {deterministic and autonomous} vector fields on {$\mathcal{M}$}.
%\begin{equation}
%\dd X_t = B(X_t) \dd t + A^{(j)}(X_t) \circ \dd W^{j}_t, \qquad \text{with }B,A^{(j)}\in C^{\infty}(M).
%\end{equation}
The stochastic flow of \eqref{eq:sde_benarous} is represented in terms of the exponential map of a
stochastic vector field $Y$, i.e.
\begin{equation}
X_t (x_0) = \text{exp}\, Y_t (x_0) ,
\end{equation}
the vector field $Y$ being expressed by an infinite series of iterated stochastic integrals
multiplying nested commutators of the vector fields $B,A^{(j)}$. This representation is proved up
to a strictly positive stopping time and {extends} some previous results in \cite{Doss},
\cite{Sussmann} for the commutative case and in \cite{Yamato}, \cite{Kunita}, \cite{MR658689} for
the nilpotent case. {Refinements} of \cite{BenArous} were proved in \cite{Castell} making the
expansion of $Y$ more explicit. Later, numerical methods based on these representations were
proposed in \cite{CastellGaines} and \cite{CastellGaines96}. Such techniques, known as
Castell-Gaines methods, require the approximation of the solution to a time-dependent ODE besides
the approximation of iterated stochastic integrals. Truncating the expansion of $Y$ at a specified
order, these schemes turn out to be asymptotically efficient in the sense of Newton \cite{Newton}.

%   When $\mathcal{M}=\mathcal{G}$ is a Lie group, and $B$, $A^{j}$ are smooth invariant vector fields on $\mathcal{G}$, the representation for $X$ given in \cite{BenArous} can be understood as a stochastic extension of the deterministic Magnus expansion.
% We shall observe that the author of \cite{BenArous} does not mention this connection. Possibly, this is due to the fact that the original Magnus expansion and the expansions in \cite{BenArous} are motivated by different problems. Indeed, the former is an attempt to represent the solution of a non-autonomous linear deterministic differential equation, and the latter is an attempt of representing the flow of a stochastic differential equation, which depends on several vector fields, by means of the flow of a deterministic differential equation, which depends on one single stochastic vector field.
If $\mathcal{M}=\Md$ and the vector fields are linear, then \eqref{eq:sde_benarous} reduces to the
Stratono-vich version of \eqref{eq:SDE_linear_b} with $B,A^{(j)}$ constant matrices, and the
representation of $X$ given in \cite{BenArous} can be seen as a stochastic ME, in that the
exponential map of $Y$ reduces to the multiplication by a matrix exponential. In fact, in this
case the expansion in \cite{BenArous} becomes explicit in terms of iterated stochastic integrals,
and can be shown to coincide with the expansion in this paper by applying It\^o-Stratonovich
conversion formula. In the {very interesting} paper \cite{LordMalhamSimonWiese}, the authors study
several computational aspects of numerical schemes stemming from the truncated ME, in which the
iterated stochastic integrals are approximated by their conditional expectation. Besides showing
that asymptotic efficiency holds for an arbitrary number of Brownian components, they compare the
theoretical accuracy with the one of analogous schemes based on Dyson (or Neumann) series, which
are obtained by applying stochastic Taylor expansion directly on the equation. They find that,
although the theoretical accuracy of Magnus schemes is not superior, Magnus-based approximations
seem more accurate than their Dyson counterparts in practice. They also discuss the computational
cost deriving from approximating the iterated stochastic integrals and the matrix exponentiation,
in relation to different features of the problem such as the dimension and the number of Brownian
motions, as well as to the order of the numerical scheme.
%  can be proved to be equivalent to the ME obtained in this paper, by converting Stratonovich integrals into It\^o ones.

{The second stream of literature is explicitly aimed at extending the original Magnus results to
stochastic settings and can be traced back to \cite{burrage1999high} where the ME is derived via
formal arguments for a linear system of Stratonovich SDEs with deterministic coefficients.}
Clearly, in the autonomous case such expansion coincides with the one obtained by Ben Arous (\cite{BenArous}),
whereas in the non-autonomous case, $B\equiv 0$ and $j=1$, it is formally equivalent to the
deterministic ME \eqref{eq:maguns_det} with all the Lebesgue integrals replaced by Stratonovich
ones. The authors {of \cite{burrage1999high}} do not address the convergence of the ME, but rather
study computational aspects of the resulting approximation, in particular in comparison with
Runge-Kutta stochastic schemes. The authors of \cite{GoranSolo} consider the Ito SDE
\eqref{eq:SDE_linear_b} with constant coefficients, and propose to resolve via Euler method the
SDE \eqref{eq:SDE_linear_bbb} for the logarithm of the solution. In \cite{Wang} the ME for the
Stratonovich version of \eqref{eq:SDE_linear_b} with deterministic coefficients is applied to
solve non-linear SDEs; however, the error analysis of the truncated expansion seems flawed, since
the fact that the Magnus series converges only up to a positive stopping time is overlooked. In \cite{Muniz2021}, a general procedure for designing higher
strong order methods for It\^o SDEs on matrix Lie groups is outlined.

We now go on to discuss the contribution of this paper with respect to {the existing
literature.} In the first place, Theorem \ref{th:convergence} on the convergence of the ME %\red{\sout{up to a strictly positive stopping time}}
requires very weak conditions on the coefficients, which are stochastic processes satisfying the
sole assumption of progressive measurability. This is a novel aspect compared to the results in
\cite{BenArous}, \cite{Castell}, which surely cover a wider class of SDEs, but under the
assumption of {time-independent} deterministic coefficients. We point out that this feature is
also relevant in light of the fact that our result is stated for It\^o SDEs as opposed to
Stratonovich ones. Indeed, while this difference might appear as minor in the Markovian case,
where a simple conversion formula exists (cf. \cite{correales2018vs} and \cite{MR2180429}), it
becomes substantial in the case of progressively measurable coefficients. We also point out that,
even in the Markovian non-autonomous case, convergence issues were not discussed in
\cite{burrage1999high} and \cite{LordMalhamSimonWiese}.

Another novel aspect of our result concerns the estimate \eqref{eq:estimate_tau_conv_b} for the
{cumulative distribution function} of the stopping time $\tau$ up to which the Magnus series
converges to the real logarithm of the solution: {this kind of estimate} was unknown even in
the autonomous case. Theorem 11 in \cite{BenArous} (see also \cite{Castell}) provides an
asymptotic estimate for the truncation error of the logarithm, which in the linear case studied in
this paper would read as
 \begin{equation}
%Err^{(N)} :=
Y_t =  \sum_{n=1}^N Y^{(n)}_t + t^{\frac{N+1}{2}} R_t, \qquad t<T,
\end{equation}
with $R$ bounded in probability. Although this type of result holds for the general SDE
\eqref{eq:sde_benarous}, it is weaker {than} Theorem \ref{th:convergence} in the linear case. In
fact, it can be obtained by \eqref{eq:estimate_tau_conv_b} together with the standard estimate
$\norm{ \sup_{0\leq s\leq t} \fnorm{Y^{(n)}_s}  }_{L^2(\Omega)}\leq C t^{\frac{N+1}{2}}$, but not the other
way around.

{A rigorous error analysis of the ME is left for future research, as well as
applications to non-linear SDEs (see \cite{Wang} for a recent attempt in this direction).} %For the latter, ideas can already be found in \cite{Wang}.}

%We also note, that this work has recently already influenced the works of \cite{Friz2021} and \cite{Muniz2021}.

%\sout{Another goal, which is currently being developed, is an application to financial mathematics with the aim of providing a novel SPDE method for so-called Rating-trigger models. }}
%Finally, we also propose a new method for the numerical solution of SPDEs based on spatial
%discretization and application of the stochastic ME.

{\vspace{10pt} %\blu
The rest of the paper is structured as follows. In Section \ref{sec:Magnus} we
derive %and test
the ME and prove Theorem \ref{th:convergence}. In particular, Section \ref{sec:prel} contains
the key Lemma \ref{eq:lemm_diff} with a representations for the first and second order differentials through which the terms $Y^{(n)}$ in \eqref{eq:logarith_real_2}-\eqref{eq:convergence}
will be defined, and some preliminary results that will be used to derive the expansion. Section
\ref{sec:formal_expansion}  contains a formal derivation of
\eqref{eq:logarith_real_2}-\eqref{eq:convergence}. Section \ref{sec:convergence} is entirely
devoted to the proof of Theorem \ref{th:convergence}.

In Section \ref{sec:num_tests} we first introduce a numerical test for an SDE with
constant, non-commuting coefficients. The formulas for the first three orders of the
ME in this test will also be used in
in Subsection \ref{sec:par_spdes}, where we present
the application of the ME to the numerical solution of parabolic SPDEs. In particular, {in Subsection
\ref{subsec:SPDEs} we recall some general facts about stochastic Cauchy problems,
 in Subsection \ref{sec:finitediff_magnus}} we introduce the finite-difference--Magnus
approximation scheme and
we check the effectiveness of the proposed approach through numerical tests.
Finally, we provide an additional numerical test to assess the accuracy of the ME in the case of time-dependent coefficients.}

%\vspace{10pt} %\blu
%{The rest of the paper is structured as follows. In Section \ref{sec:Magnus} we
%derive %and test
%the ME and prove Theorem \ref{th:convergence}. In particular, Section \ref{sec:prel} contains the definitions of the
%operators through which the terms $Y^{(n)}$ in \eqref{eq:logarith_real_2}-\eqref{eq:convergence}
%will be defined and some preliminary results that will be used to derive the expansion. Section
%\ref{sec:formal_expansion}  contains a formal derivation of
%\eqref{eq:logarith_real_2}-\eqref{eq:convergence}. Section \ref{sec:convergence} is entirely
%devoted to the proof of Theorem \ref{th:convergence}. In Section \ref{sec:par_spdes} we present
%the application of the ME to the numerical solution of parabolic SPDEs. In particular, {in Section
%\ref{subsec:SPDEs} we recall some general facts about stochastic Cauchy problems,
 %in Section \ref{sec:finitediff_magnus}} we introduce the finite-difference--Magnus
%approximation scheme and %, while in Section \ref{sec:spde_numerics}
%we check the effectiveness of the proposed approach through numerical tests. {\color{red}\sout{Appendix} Section} \ref{sec:num_tests} contains some additional numerical tests to assess the accuracy of the ME, and
%{\color{red}\sout{Appendix} Lemma}
%\ref{eq:lemm_diff} contains a representations for the first and second order differentials of the
%matrix exponential map.}

\section{It\^{o}-Stochastic ME}\label{sec:Magnus}
{In this section we define the terms in the expansion \eqref{eq:convergence} and
%present some
%numerical tests to confirm the accuracy of the approximate solutions to \eqref{eq:SDE_linear_b}
%stemming from the truncation of %such an
%{the series.}
prove Theorem \ref{th:convergence}.
}

\subsection{Preliminaries}\label{sec:prel}
Let $\Md$ be the vector space of $(d\times d)$ real-valued matrices. {For the readers' convenience we recall the following notations.} Throughout the paper we denote by $[\cdot,\cdot]$ the standard Lie brackets operation, i.e.
\begin{equation}
[M,N] \coloneqq MN - NM ,\qquad M,N\in\Md,
\end{equation}
and by $\snorm{\cdot}$ the spectral norm on $\Md$. Also, we denote by $\bern_k$, $k\in\N_0$, the Bernoulli numbers defined as the derivatives of the function $x\mapsto x/(e^{x}-1)$ computed at $x=0$. For sake of convenience we report the first three Bernoulli numbers: $\bern_0=1$, $\bern_1=-\frac{1}{2}$, $\bern_2=\frac{1}{6}$. Note also that $\bern_{2m+1}=0$ for any $m\in\N$.

We now define the operators that we will use in the sequel. For a fixed $\Matrix\in\Md$, we let: %define the following operators:
\begin{itemize}
\item $ \ad^j_{\Matrix} : \Md \to \Md $, for $j\in\N_0$, be the linear operators defined as
\begin{align}
\ad^0_{\Matrix}(M)  &\coloneqq M,\\
    \ad^j_{\Matrix}(M) &\coloneqq \comm{{\Matrix}}{\ad^{j-1}_{\Matrix}(M)}, \qquad j\in\N .%\qquad  \qquad M\in \Md
\end{align}
To ease notation we also set $ \ad_{\Matrix} \coloneqq \ad^1_{\Matrix}$;
\item $e^{\ad_{\Matrix}}: \Md \to \Md $ be the linear operator defined as
\begin{equation}\label{eq:ad_operator}
e^{\ad_{\Matrix}}(M)\coloneqq
        \sum_{n=0}^{\infty}{
            \frac{1}{n!} \ad_{\Matrix}^n(M)
        }=
        e^{{\Matrix}} M e^{-{\Matrix}},
\end{equation}
where $e^{\Matrix}\coloneqq \sum_{j=0}^{\infty}{\frac{{\Matrix}^j}{j!}}$ is the standard matrix exponential;
\item $\dD_{\Matrix}: \Md \to \Md $ be the linear operator defined as
\begin{equation}
\label{eq:operator_deB}
   \dD_{\Matrix} (M) \coloneqq
            \int_{0}^{1}{
                 e^{\ad_{\tau\! {\Matrix}}}(M) \dd\tau
            }=
        \sum_{n=0}^{\infty} \frac{1}{(n+1)!} \ad^n_{\Matrix}(M);
\end{equation}
\item $\dDD_{\Matrix}: \Md \times \Md \to \Md $ be the bi-linear operator defined as
\begin{align}
\label{eq:operator_ddeB}
        &
    \dDD_{\Matrix}(M,N)\coloneqq
          \dD_{\Matrix}(M)\, \dD_{\Matrix}(N)  +
        \int_{0}^{1}{
            \tau \comm{\dD_{\tau \Matrix}  (N) }{e^{\ad_{\tau {\Matrix}}}(M)}\dd \tau
        }\\ \label{eq:operator_ddeB_bis}
    &=
    \sum_{n=0}^{\infty}{
        \sum_{m=0}^{\infty}{
            \frac{ \ad^n_{\Matrix}(M)}{(n+1)!} \frac{ \ad^m_{\Matrix} (N)}{(m+1)!}
        }
    }  +
    \sum_{n=0}^{\infty}{
        \sum_{m=0}^{\infty}{
            \frac{
                \comm{\ad^n_{{\Matrix}}(N)}{\ad^m_{{\Matrix}}(M)}
            }{
                (n+m+2)(n+1)!m!
            }
        }
    }.
\end{align}

\end{itemize}

In the next lemma we provide explicit expressions for the first and second order differentials of
the exponential map $\Md\ni M\mapsto e^M$. We recall that this map is smooth and in particular, it
is continuously twice differentiable.
\begin{lemma}\label{eq:lemm_diff}
%The standard exponential map $B\mapsto e^B$ is of class $C^2$ on $\Md$. In particular,
For any $\Matrix\in\Md$, the first and the second order differentials at $\Matrix$ of the exponential map $\Md\ni M\mapsto e^M$ are given by
\begin{align}\label{eq:first_diff}
   M&\mapsto  \dD_\Matrix(M)\, e^\Matrix = e^\Matrix \, \dD_{-\Matrix}(M) ,  && M\in\Md,\\
    (M,N)&\mapsto \dDD_\Matrix(M,N)\, e^\Matrix, && M,N\in\Md,
\end{align}
where $\dD_\Matrix$ and $\dDD_\Matrix$ are the linear and (symmetric) bi-linear operators as defined in \eqref{eq:operator_deB}-\eqref{eq:operator_ddeB}.
\end{lemma}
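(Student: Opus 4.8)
The plan is to compute the differentials of the exponential map directly from a suitable integral representation of $e^{\Matrix}$ and its perturbations. The cleanest starting point is the standard identity for the first differential, which can be obtained by differentiating $e^{\Matrix+sM}$ in $s$ at $s=0$. Concretely, I would introduce the one-parameter family $F(s,t)\coloneqq e^{t(\Matrix+sM)}$ and observe that $\partial_t F = (\Matrix+sM)F$ with $F(s,0)=I_d$. Differentiating in $s$ and setting $s=0$ yields an inhomogeneous linear ODE in $t$ for the directional derivative $G(t)\coloneqq \partial_s F(s,t)|_{s=0}$, namely $\partial_t G = \Matrix\, G + M\, e^{t\Matrix}$ with $G(0)=0$. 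Solving this by variation of constants gives $G(1)=\int_0^1 e^{(1-\tau)\Matrix} M\, e^{\tau\Matrix}\,\dd\tau$, and rewriting $e^{(1-\tau)\Matrix} M e^{\tau\Matrix} = e^{\Matrix}\, e^{-\tau\Matrix}M e^{\tau\Matrix} = e^{\Matrix}\, e^{\ad_{-\tau\Matrix}}(M)$ produces precisely $e^{\Matrix}\dD_{-\Matrix}(M)$; the alternative factorization with $e^{\ad_{\tau\Matrix}}$ on the left gives $\dD_{\Matrix}(M)\,e^{\Matrix}$. The series representation in \eqref{eq:operator_deB} then follows by expanding $e^{\ad_{\tau\Matrix}}$ and integrating term by term in $\tau$.

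For the second differential I would iterate the same device. Introduce a two-parameter family $e^{t(\Matrix+sM+rN)}$, or equivalently differentiate the integral formula for the first differential once more in the direction $N$. The most systematic route is to start from $\dD_{\Matrix}(M)\,e^{\Matrix}=\int_0^1 e^{\tau\Matrix} M\, e^{(1-\tau)\Matrix}\,\dd\tau$ and apply the already-established first-differential formula to each matrix exponential appearing inside the integral, using the product rule. Differentiating $e^{\tau\Matrix}$ in the direction $N$ contributes a factor involving $\dD_{\tau\Matrix}(\tau N)=\tau\dD_{\tau\Matrix}(N)$, and similarly for $e^{(1-\tau)\Matrix}$; after collecting terms and factoring out $e^{\Matrix}$, one should recover exactly the two pieces in \eqref{eq:operator_ddeB}: the symmetric product $\dD_{\Matrix}(M)\dD_{\Matrix}(N)$ plus the commutator integral $\int_0^1 \tau\,\comm{\dD_{\tau\Matrix}(N)}{e^{\ad_{\tau\Matrix}}(M)}\,\dd\tau$. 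The equivalence of \eqref{eq:operator_ddeB} with the double-series form \eqref{eq:operator_ddeB_bis} is then a matter of expanding both $\dD$-operators and the adjoint exponentials into their defining power series and computing the scalar $\tau$-integral $\int_0^1 \tau^{n+1}(1-\tau)^m\,\dd\tau$ type coefficients, matching the denominators $(n+m+2)(n+1)!\,m!$.

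The symmetry of $\dDD_{\Matrix}$ asserted in the statement deserves a separate check: although the representation \eqref{eq:operator_ddeB} is not manifestly symmetric in $(M,N)$, the fact that it arises as a second differential of a smooth map guarantees symmetry by the equality of mixed partials (Schwarz's theorem), so I would invoke that rather than verify it by hand. Throughout, the convergence of all series and the legitimacy of term-by-term integration and differentiation are unproblematic because the matrix exponential series converge absolutely and uniformly on bounded sets, so differentiation under the integral sign and rearrangement of the double sums are justified by dominated convergence on the compact interval $[0,1]$.

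The main obstacle I anticipate is purely bookkeeping rather than conceptual: reconciling the compact integral form \eqref{eq:operator_ddeB} with the explicit double series \eqref{eq:operator_ddeB_bis} requires careful tracking of which adjoint exponential acts on $M$ versus $N$ and correctly evaluating the resulting Beta-type integrals $\int_0^1 \tau^{a}(1-\tau)^{b}\,\dd\tau = a!\,b!/(a+b+1)!$. A likely pitfall is mismatching the left-versus-right placement of $e^{\Matrix}$ when differentiating the first-order formula a second time, since the two equivalent factorizations $\dD_{\Matrix}(M)e^{\Matrix}=e^{\Matrix}\dD_{-\Matrix}(M)$ must be used consistently to keep $e^{\Matrix}$ factored cleanly to the right; I would fix the factorization at the outset and not switch midway.
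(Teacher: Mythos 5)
Your proof is correct, but it is organized differently from the paper's. For the first differential the paper simply cites the classical result (Lemma 2 in \cite{MR2494199}), while you rederive it by variation of constants; that is fine and self-contained. The substantive difference is in the second differential. The paper differentiates the product $\dD_{\Matrix+tN}(M)\,e^{\Matrix+tN}$ via the product rule and computes $\frac{\dd}{\dd t}\dD_{\Matrix+tN}(M)$ from the conjugation representation $\int_0^1 e^{\ad_{\tau(\Matrix+tN)}}(M)\,\dd\tau$, which lands \emph{directly} on the two pieces of \eqref{eq:operator_ddeB}, namely the product $\dD_\Matrix(M)\,\dD_\Matrix(N)$ and the commutator integral. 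You instead differentiate the Duhamel representation $\int_0^1 e^{\tau\Matrix} M e^{(1-\tau)\Matrix}\,\dd\tau$ under the integral sign; after factoring $e^\Matrix$ to the right this yields the time-ordered form $\int_0^1\!\int_0^\tau e^{\ad_{v\Matrix}}(N)\,e^{\ad_{\tau\Matrix}}(M)\,\dd v\,\dd\tau+\int_0^1\!\int_\tau^1 e^{\ad_{\tau\Matrix}}(M)\,e^{\ad_{u\Matrix}}(N)\,\dd u\,\dd\tau$, and recognizing this as $\dDD_\Matrix(N,M)$ requires splitting the double integral defining $\dD_\Matrix(M)\,\dD_\Matrix(N)$ over $[0,1]^2$ into the regions $\{u\le\tau\}$ and $\{u>\tau\}$ and cancelling against the commutator term. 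This rearrangement does go through, but it is exactly the step your ``collecting terms and factoring out $e^\Matrix$'' glosses over, and it is where the bookkeeping you warn about actually lives; the paper's decomposition avoids it entirely because the product-rule split produces the two target pieces at once. Two minor remarks: appealing to Schwarz's theorem for the symmetry of $\dDD_\Matrix$ is legitimate (the paper asserts symmetry in the statement without a separate verification, and the exponential map is $C^2$); and for the series identity \eqref{eq:operator_ddeB_bis} no Beta integrals are needed if you expand \eqref{eq:operator_ddeB} itself, since $\ad^n_{\tau\Matrix}=\tau^n\ad^n_\Matrix$ reduces everything to $\int_0^1\tau^{n+m+1}\,\dd\tau=\frac{1}{n+m+2}$; the Beta-type coefficients only arise in your parametrization because of the $e^{(1-\tau)\Matrix}$ factors.
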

{We point out that this result, though very basic, is novel and of independent interest (for instance it was recently employed in \cite{Friz2021}).}
 %and \cite{Muniz2021}.
\begin{proof}
The first part of the statement, concerning the first order differential, is a classical result; its proof can be found in \cite[Lemma 2]{MR2494199}
among other references.

We prove the second part.
%Denote by $e_{ij}$ the generic element of the canonical basis of $\Md$. By the first part, the first-order partial derivative of $e^B$ w.r.t. $e_{ij}$ is
%\begin{equation}
%\partial_{e_{ij}} e^B = \dD_B (e_{ij}) \cdot e^B, \qquad B\in\Md.
%\end{equation}
Fix $M\in \Md$ and denote by $\partial_{M} e^\Matrix$ the first order directional derivative of $e^\Matrix$ w.r.t. $M$, i.e.
\begin{equation}
\partial_{M} e^\Matrix \coloneqq \frac{\dd}{\dd t} e^{\Matrix+t M}\Big|_{t=0}
.
\end{equation}
By the first part, we have
\begin{equation}
\partial_{M} e^\Matrix = \dD_\Matrix (M)\, e^\Matrix, \qquad \Matrix\in\Md.
\end{equation}
We now show that, for any $M,N\in\Md$, %there exists
the second order directional derivative %$\partial_{M,N} e^\Matrix$%, i.e.
\begin{equation}
\partial_{N,M} e^\Matrix: = \frac{\dd}{\dd t} \partial_M e^{\Matrix+t N}\Big|_{t=0}
\end{equation}
is given by %and
\begin{equation}\label{eq:second_derivatives}
\partial_{N,M} e^\Matrix = \dDD_\Matrix (N,M) \, e^\Matrix, \qquad \Matrix\in\Md.
\end{equation}
We have
\begin{align}
\frac{\dd}{\dd t} \partial_{M} e^{\Matrix+t N} &= \frac{\dd}{\dd t} \Big( \dD_{\Matrix+t N} (M)\, e^{\Matrix+t N} \Big)\\ \label{eq:ste10}
 &=  \dD_{\Matrix+t N} (M) \, \dD_{\Matrix+t N} (N)\, e^{\Matrix+t N} +  \Big( \frac{d}{dt} \dD_{\Matrix+t N} (M)\Big)\, e^{\Matrix+t N} .
\end{align}
%Consider now a smooth test function $\Omega:[0,T]\to \Md$ and compute
%\begin{equation}
%\frac{\dd}{\dd t} \partial_{e_{ij}} e^{Y_t} = \frac{\dd}{\dd t} \dD_{Y_t} (e_{ij}) e^{Y_t} = \dD_{Y_t} (e_{ij}) \dD_{Y_t} (\Omega')
%\end{equation}
%and note that by the product and chain rule for the derivative we have, as well as part one of the
%proof we have
%\begin{equation}\label{eq:ste_01}
%    \frac{\dd^2}{\dd t^2} e^{Y_t} =
%    \frac{\dd}{\dd t} \left(
%        \dD_{Y_t}\left(\Omega'_t\right) e^{Y_t}
%    \right) =
%    \left(
%        \dD_{Y_t}\left(\Omega'_t\right)
%    \right)^2 \, e^{Y_t} +
%    \left(
%        \frac{\dd}{\dd t} \left(\dD_{Y_t}\left(\Omega'_t\right)\right)
%    \right) \cdot e^{Y_t}.
%\end{equation}
%Now, let us consider the derivative in the last summand.
%First of all,
We use the definition \eqref{eq:operator_deB} and exchange the differentiation and
integration signs to obtain
\begin{align}
    \frac{\dd}{\dd t} \dD_{\Matrix+t N}(M)   & =
    \int_{0}^{1}{
        \frac{\dd}{\dd t}
            e^{\ad_{\tau (\Matrix+tN)}}(M)
      \dd\tau
    }
 \intertext{(by \eqref{eq:ad_operator})}
% By \eqref{eq:ad_operator} we have
%$e^{\ad_{\tau (B+tN)}}(M)=
%    e^{\tau (B+tN)}\, M \, e^{-\tau (B+tN)}$, which leads to
%\begin{align*}
%    \int_{0}^{1}{
%        \frac{\dd}{\dd t} \left(
%            e^{\ad_{\tau (B+tN)}}(M)
%        \right)\dd\tau
%    }
    &=
    \int_{0}^{1}{
        \frac{\dd}{\dd t} \Big( e^{ \tau (\Matrix+tN)} M e^{ - \tau (\Matrix+tN)} \Big) \dd\tau  }\\
        &=       \int_{0}^{1}{
            \left(\frac{\dd}{\dd t} e^{ \tau (\Matrix+tN)} \right)\,
            M \,
            e^{ - \tau (\Matrix+tN)}
            \dd\tau
        }%\\
        +
%        \int_{0}^{1}{
%            e^{ \tau (B+tN)}\,
%            \Omega''_t \,
%            e^{ - \tau (B+tN)}  \dd\tau
%        }+
        \int_{0}^{1}{
            e^{ \tau (\Matrix+tN)} \,
            M \,
             \frac{\dd}{\dd t} e^{ - \tau (\Matrix+tN)}
            \dd\tau
        }
        \intertext{(by employing the two expressions in \eqref{eq:first_diff} for the first-order differential)}
    &=
        \int_{0}^{1}{
            \tau
                \dD_{ \tau (\Matrix+tN)}(N) \,
                e^{ \tau (\Matrix+tN)}\,
            M \,
            e^{ - \tau (\Matrix+tN)}
            \dd\tau
        }  \\
        &\quad      -
%        \int_{0}^{1}{
%            e^{ \tau (B+tN)}\,
%            \Omega''_t \,
%            e^{ - \tau (B+tN)}  \dd\tau
%        }+
         \int_{0}^{1}{
            \tau
            e^{ \tau (\Matrix+tN)} \,
            M \,
                e^{ -\tau (\Matrix+tN)} \,
                \dD_{ \tau (\Matrix+tN)}(N)
            \dd\tau
        }\\
        &=   \int_{0}^{1}{
            \tau
            \comm{
                \dD_{\tau (\Matrix+tN)} (N)
            }{
                e^{\ad_{\tau (\Matrix+tN)}}(M)
            }
            \dd\tau
        }.
        \end{align}
This, together with \eqref{eq:ste10}, proves \eqref{eq:second_derivatives}. %Now, the continuity of $\Matrix\mapsto \partial_{M,N} e^\Matrix$ is a straightforward application of Lebesgue dominated convergence theorem.

To conclude, we prove equality \eqref{eq:operator_ddeB_bis}. It is enough to observe that
\begin{align*}
    \int_{0}^{1}{
        \tau \,
            \dD_{\tau \Matrix} (N)
    \,
            e^{\ad_{\tau \Matrix}}  (M)
        \dd \tau
    }&=
    \int_{0}^{1} {
        \tau
        \sum_{n=0}^{\infty}{
            \sum_{m=0}^{\infty}{
                \frac{ \ad^n_{\tau \Matrix}(N)}{(n+1)!}
                \frac{ \ad^m_{\tau \Matrix} (M)}{m!}
            }
        }
        \dd \tau
    }\\&=
    \int_{0}^{1}{
        \sum_{n=0}^{\infty}{
            \sum_{m=0}^{\infty}{
                \frac{\ad^n_{\Matrix} (N)}{(n+1)!}
                \frac{  \ad^m_{\Matrix} (M)}{m!}
                \tau^{n+m+1}
            }
        }\dd \tau
    }\\&=
    \sum_{n=0}^{\infty}{
        \sum_{m=0}^{\infty}{
            \frac{
                \ad^n_{\Matrix} (N)
                \ad^m_{\Matrix} (M)
            }{(n+m+2)(n+1)!m!}
        }
    }.
\end{align*}
%Finally, equality \eqref{eq:operator_ddeB_ter} stems from a straightforward reordering.
\end{proof}

\begin{proposition}[\bf It\^o formula]\label{prop:Ito 1}
   % Denoting by $e^{Y_t}$ the standard matrix exponential,
  Let $Y$ be an $\Md$-valued It\^o process of the form
\begin{equation}\label{eq:general_ito}
 \dd Y_t = \mu_t \dd t + %\sum_{j=1}^{q}
 {\sigma^{j}_t \dd W^{j}_t}. %, \qquad t>0.
\end{equation}
Then we have
            \begin{align*}
        \dd e^{Y_t} =%&=
               \bigg(
            \dD_{Y_t} \left(\mu_t\right)  +
            \frac{1}{2}
                \sum_{j=1}^{q}
                {
                    \dDD_{Y_t}\big( \s^j_t , \s^j_t \big)
                }
        \bigg) e^{Y_t}  \dd t +
            %\sum_{j=1}^{q}
            {\dD_{Y_t} \big( \s^j_t \big)\, e^{Y_t} \dd W^j_t
            }.%, \qquad t>0.
    \end{align*}
\end{proposition}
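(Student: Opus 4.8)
The plan is to read the statement as a direct application of the multidimensional It\^o formula to the smooth map $F\colon\Md\to\Md$, $F(M)\coloneqq e^M$, with the process $Y$ regarded as an It\^o process taking values in $\Md\cong\R^{d^2}$. Since $M\mapsto e^M$ is real-analytic, and in particular of class $C^2$, It\^o's formula applies entry by entry. Writing $\partial_{H}F$ and $\partial_{N,H}F$ for the first and second directional derivatives of $F$, this yields the symbolic expansion
\begin{equation*}
\dd F(Y_t)=\partial_{\dd Y_t}F(Y_t)+\tfrac12\,\partial^2 F(Y_t)[\dd Y_t,\dd Y_t],
\end{equation*}
which I would make precise by substituting $\dd Y_t=\m_t\,\dd t+\s^j_t\,\dd W^j_t$ and by using the covariation rules $\dd\langle W^i,W^j\rangle_t=\d_{ij}\,\dd t$ together with $\dd t\,\dd t=\dd t\,\dd W^j_t=0$. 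In this way the only surviving second-order contributions are the $q$ diagonal terms built from the pairs $(\s^j_t,\s^j_t)$, summed over $j$.

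The crucial input is Lemma~\ref{eq:lemm_diff}, which identifies these directional derivatives in closed form as $\partial_{H}F(\Matrix)=\dD_\Matrix(H)\,e^\Matrix$ and $\partial_{N,H}F(\Matrix)=\dDD_\Matrix(N,H)\,e^\Matrix$. Inserting these expressions, the $\dd t$-coefficient collapses to $\dD_{Y_t}(\m_t)+\tfrac12\sum_{j=1}^{q}\dDD_{Y_t}(\s^j_t,\s^j_t)$ acting on $e^{Y_t}$, while each $\dd W^j_t$-coefficient becomes $\dD_{Y_t}(\s^j_t)\,e^{Y_t}$. Keeping the common factor $e^{Y_t}$ on the right throughout—exactly as it appears in Lemma~\ref{eq:lemm_diff}—reproduces the asserted identity.

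The one point requiring care, and the main (if modest) obstacle, is the translation between the coordinate form of It\^o's formula and the coordinate-free differentials of the lemma. Denoting by $Y^{ab}$ the entries of $Y$, one has $\dd\langle Y^{ab},Y^{cd}\rangle_t=\sum_{j=1}^{q}(\s^j_t)^{ab}(\s^j_t)^{cd}\,\dd t$, and I would verify that the contracted Hessian $\sum_{ab,cd}\partial^2_{Y^{ab}Y^{cd}}F(Y_t)\,(\s^j_t)^{ab}(\s^j_t)^{cd}$ equals precisely the bilinear operator $\dDD_{Y_t}$ evaluated at $(\s^j_t,\s^j_t)$, using that $\dDD_\Matrix$ is the symmetric bilinear form representing the second differential. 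Beyond this bookkeeping everything is a direct substitution, and no integrability assumptions are needed other than those already ensuring that $Y$ is a genuine It\^o process, since the It\^o formula for $C^2$ maps is of a pathwise (local) nature.
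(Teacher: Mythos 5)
Your proposal is correct and follows essentially the same route as the paper: the paper's proof likewise consists of applying the multi-dimensional It\^o formula to the $C^2$ map $M\mapsto e^M$ and invoking Lemma \ref{eq:lemm_diff} for the first and second differentials. The coordinate bookkeeping you spell out (identifying the contracted Hessian with $\dDD_{Y_t}(\s^j_t,\s^j_t)$ via the symmetry of the second differential) is exactly what the paper leaves implicit.
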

\begin{proof}
%The exponential map $\Md\ni M\mapsto e^M$ is of class $C^2$. Therefore,
The statement follows from %is a straightforward application of
the multi-dimensional It\^o formula (see, for instance, \cite{MR2791231}) combined with Lemma
\ref{eq:lemm_diff} and applied to the exponential process $e^{Y_t}$.
\end{proof}
We also have the following inversion formula for the operator $\dD_{\Matrix}$.
\begin{lemma}[\bf Baker, 1905]\label{lem:Baker}
Let ${\Matrix}\in\Md$. The operator $\dD_{\Matrix}$ is invertible if and only if the eigenvalues of the linear operator $\ad_{\Matrix}$ are different from $2m\pi$, $m\in\mathbb{Z}\setminus\{0\}$.
%such that the eigenvalues of the linear operator $\ad_{\Matrix}$ are different from $2m\pi$, $m\in\mathbb{Z}$. Then, the operator $\dD_{\Matrix}$ is invertible.
Furthermore, if $\norm{{\Matrix}}<\pi$, then
\begin{equation}\label{eq:inversion_baker}
\dD_{\Matrix}^{-1}(M) = \sum_{k=0}^{\infty} \frac{\bern_k}{k!}\ad^k_{\Matrix}(M), \qquad M\in \Md.
\end{equation}
\end{lemma}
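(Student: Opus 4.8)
The plan is to regard $\dD_{\Matrix}$ as a fixed analytic function applied to the single linear operator $\ad_{\Matrix}$, and thereby reduce the whole statement to elementary complex analysis via the spectral mapping theorem. Indeed, the series in \eqref{eq:operator_deB} shows that $\dD_{\Matrix}=g(\ad_{\Matrix})$, where $g(z)\coloneqq\sum_{n=0}^{\infty}\frac{z^{n}}{(n+1)!}=\frac{e^{z}-1}{z}$ is entire with $g(0)=1$. As a first step I would locate the zeros of $g$: the identity $g(z)=0$ forces $e^{z}=1$ with $z\neq0$, so the zeros are exactly the purely imaginary points $2\pi\mathrm{i}\,m$, $m\in\Z\setminus\{0\}$.

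For the invertibility criterion I would invoke the spectral mapping theorem for the finite-dimensional operator $\ad_{\Matrix}$ acting on $\Md$: the spectrum of $g(\ad_{\Matrix})$ is the image under $g$ of the spectrum of $\ad_{\Matrix}$, that is, the eigenvalues of $\dD_{\Matrix}$ are the numbers $g(\mu)$ with $\mu$ ranging over the eigenvalues of $\ad_{\Matrix}$. Consequently $\dD_{\Matrix}=g(\ad_{\Matrix})$ is singular if and only if $0\in g(\sigma(\ad_{\Matrix}))$, i.e. if and only if some eigenvalue $\mu$ of $\ad_{\Matrix}$ is a zero of $g$; by the previous step this means $\mu=2\pi\mathrm{i}\,m$ for some $m\neq0$, which is the asserted equivalence. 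The spectral mapping step is standard for entire functions of a matrix and requires no diagonalizability hypothesis, since it can be read off the Jordan form (or from the holomorphic functional calculus).

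To obtain the explicit inverse under the hypothesis $\norm{\Matrix}<\pi$, the key remark is that the reciprocal $\frac{1}{g(z)}=\frac{z}{e^{z}-1}=\sum_{k=0}^{\infty}\frac{\bern_{k}}{k!}z^{k}$ is, by the very definition of the Bernoulli numbers, the generating series appearing in \eqref{eq:inversion_baker}, and that its radius of convergence equals $2\pi$, the distance from the origin to the nearest singularities $\pm 2\pi\mathrm{i}$. I would then bound the size of $\ad_{\Matrix}$: equipping the space of endomorphisms of $\Md$ with the operator norm induced by the spectral norm, submultiplicativity gives $\norm{\ad_{\Matrix}(M)}\le 2\norm{\Matrix}\,\norm{M}$, hence $\norm{\ad_{\Matrix}}_{\mathrm{op}}\le 2\norm{\Matrix}<2\pi$. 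Since $2\pi$ is precisely the radius of convergence of $1/g$, the operator series $\sum_{k}\frac{\bern_{k}}{k!}\ad^{k}_{\Matrix}$ converges absolutely in operator norm.

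It then remains to verify that this convergent series is the inverse of $\dD_{\Matrix}$. Both $g(\ad_{\Matrix})$ and $(1/g)(\ad_{\Matrix})$ are absolutely convergent power series in the single operator $\ad_{\Matrix}$, so their composition may be evaluated by the Cauchy product, whose coefficients are exactly those of the scalar identity $g(z)\cdot\frac{1}{g(z)}\equiv1$; this yields $\dD_{\Matrix}\circ\big(\sum_{k}\frac{\bern_{k}}{k!}\ad^{k}_{\Matrix}\big)=\mathrm{Id}$, which is \eqref{eq:inversion_baker}. I expect the only delicate points to be the rigorous justification of the spectral mapping correspondence in the non-diagonalizable case and the absolute convergence underpinning the rearrangement in the Cauchy product; the complex-analytic ingredients, namely the location of the zeros of $g$ and the radius of convergence of $1/g$, are entirely elementary.
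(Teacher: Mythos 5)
Your proof is correct, but note that the paper itself does not prove this lemma at all: it simply defers to the survey \cite{MR2494199}. Your argument is the standard self-contained one — write $\dD_{\Matrix}=g(\ad_{\Matrix})$ with $g(z)=(e^{z}-1)/z$, get the invertibility criterion from the spectral mapping theorem (valid in finite dimensions without diagonalizability, via Jordan form or holomorphic functional calculus), and get the explicit inverse from the Bernoulli generating series $z/(e^{z}-1)$, whose radius of convergence $2\pi$ dominates $\snorm{\ad_{\Matrix}}\le 2\snorm{\Matrix}<2\pi$, with the two-sided identity $\dD_{\Matrix}\circ\sum_{k}\frac{\bern_{k}}{k!}\ad_{\Matrix}^{k}=\mathrm{Id}$ read off from the Cauchy product of the scalar identity $g(z)\cdot\frac{z}{e^{z}-1}\equiv 1$; this is essentially the proof one finds in the cited reference, so what your write-up buys is completeness rather than a new method. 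One point worth flagging: your computation correctly locates the zeros of $g$ at the purely imaginary points $2\pi\mathrm{i}m$, $m\in\Z\setminus\{0\}$, so the invertibility criterion you prove is \textquotedblleft eigenvalues of $\ad_{\Matrix}$ different from $2m\pi\mathrm{i}$\textquotedblright; the lemma as printed in the paper says $2m\pi$ (omitting the imaginary unit), which appears to be a typo in the statement rather than a defect of your argument — the version you prove is the standard (and correct) form of Baker's lemma.
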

For a proof to Lemma \ref{lem:Baker} we refer the reader to \cite{MR2494199}.

\subsection{Formal derivation}\label{sec:formal_expansion}
In this section we perform formal computations to derive the terms $Y^{(n)}$ appearing in the ME
\eqref{eq:convergence}. Although such computations are heuristic at this stage, they are meant to
provide the reader with an intuitive understanding of the principles that underlie the expansion
procedure.
%they allow to derive the terms of the expansion in a simple fashion.
 Their validity will be proved a fortiori, in Section \ref{sec:convergence}, in order to prove Theorem \ref{th:convergence}.

Let {$(\Omega,\mathcal{F},P,(\mathcal{F}_t)_{t\geq 0})$} {be} a filtered probability space. Assume
that, for any $\eps,\delta \in \mathbb{R}$, the process
$X^{\eps,\delta}=\big(X^{\eps,\delta}_t\big)_{t\geq 0}$ solves the It\^o SDE
{\begin{equation}\label{eq:SDE_linear_eps}
\begin{cases}
 \dd X^{\eps,\delta}_t =\delta B_t X^{\eps,\delta}_t dt+\eps A^{(j)}_t X^{\eps,\delta}_t \dd W^j_t,\\
 X^{\eps,\delta}_0 = I_d,
\end{cases}
\end{equation}
}and that it admits the exponential representation
\begin{equation}\label{eq:exponential_esp_del}
X^{\eps,\delta}_t = e^{Y^{\eps,\delta}_t} %\qquad \text{}.
\end{equation}
with {$Y^{\eps,\delta}%=\big(Y^{\eps,\delta}_t\big)_{t\geq 0}
$} being an %\blu{\sout{real matrix-valued}
{$\Md$-valued} It\^o process. %{\blue(If we use $\Md_{\R}$ here we have to change it all
%over the paper. My original idea was to keep $\Md$ in the paper to denote real-valued matrixes,
%except for Sec. 3, which is the only section were we use complex-valued matrices).}
Clearly, if $(\eps,\delta)=(1,1)$, then \eqref{eq:SDE_linear_eps}-\eqref{eq:exponential_esp_del}
reduce to \eqref{eq:SDE_linear_b}-\eqref{eq:exponential}.
%\eqref{eq:SDE_linear_b}, and that it admits an exponential representation as in \eqref{eq:exponential}, with $Y=(Y_t)_{t\geq 0}$ being a real matrix-valued It\^o process
Assume now that $Y^{\eps,\delta}$ is %\blu{\sout{ a $\Md_{\R}$-valued process}}
of the form \eqref{eq:general_ito}. Then, Proposition \ref{prop:Ito 1} yields
\begin{align}\label{eq:rep_A}
\eps A^{(j)}_t & = \dD_{Y^{\eps,\delta}_t} \big( \s^j_t \big), \qquad j=1,\dots,q,  \\ \label{eq:rep_B}
\delta B_t  &=  \dD_{Y^{\eps,\delta}_t} \left(\mu_t\right)  +
            \frac{1}{2}
                \sum_{j=1}^{q}{
                    \dDD_{Y^{\eps,\delta}_t}\big( \s^j_t , \s^j_t \big)
                }  .
\end{align}
Inverting now \eqref{eq:rep_A}-\eqref{eq:rep_B}, in accord with \eqref{eq:inversion_baker}, one obtains
\begin{align}
\s^j_t &=  \dD_{Y^{\eps,\delta}_t}^{-1} \big( \eps A^{(j)}_t \big) = \eps \sum_{k=0}^{\infty} \frac{\bern_k}{k!}\ad^k_{Y^{\eps,\delta}_t}\big(A^{(j)}_t\big) , \qquad \qquad j=1,\dots,q, \\
\mu_t &=  \dD_{Y^{\eps,\delta}_t}^{-1} \bigg( \delta B_t  -  \frac{1}{2}
                \sum_{j=1}^{q}{
                    \dDD_{Y^{\eps,\delta}_t}\big( \s^j_t , \s^j_t \big)
                }  \bigg) \\
                & =
                \sum_{k=0}^{\infty} \frac{\bern_k}{k!}\ad^k_{Y^{\eps,\delta}_t}\biggl(
                 \delta B_t  -  \frac{1}{2}
                \sum_{j=1}^{q}
                \sum_{n=0}^{\infty}
        \sum_{m=0}^{\infty}
                \begin{aligned}[t]\arraycolsep=0pt
\biggl(&
            \frac{ \ad^n_{Y^{\eps,\delta}_t}\big(\s^j_t\big)}{(n+1)!} \frac{ \ad^m_{Y^{\eps,\delta}_t} \big(\s^j_t\big)}{(m+1)!}
       \\&+            \frac{
                \big[{\ad^n_{{Y^{\eps,\delta}_t}}\big(\s^j_t\big)},{\ad^m_{{Y^{\eps,\delta}_t}}\big(\s^j_t\big)}\big]
            }{
                (n+m+2)(n+1)!m!
            }
    \biggr) \biggr).
                \end{aligned}
\end{align}
Equivalently, $Y^{\eps,\delta}$ solves the It\^o SDE %\eqref{eq:Ito} with
{\begin{equation}\label{eq:SDE_linear_bbb}
\begin{cases}
 \dd Y^{\eps,\delta}_t = \mu^{\eps,\delta}\big(t,Y^{\eps,\delta}_t\big) \dd t +  %\sum_{j=1}^q
 \sigma^{\eps}_j\big(t,Y^{\eps,\delta}_t\big) \dd W^j_t,%\qquad t>0,
 \\ Y^{\eps,\delta}_0 = 0,
\end{cases}
\end{equation}
}with
\begin{align} \label{ae1}
\sigma^{\eps}_j(t,\cdot)   &=  \eps  \sum_{n=0}^{\infty} \frac{\bern_n}{n!}
\ad^n_{\,\cdot}\big(A^{(j)}_t\big)       ,\qquad {j=1,\dots,d,}    \\  \label{ae2}
\mu^{\eps,\delta}(t,\cdot) & =        \sum_{n=0}^{\infty} \frac{\bern_n}{n!} \ad^n_{\,\cdot}
\bigg(\delta B_t - \frac{1}{2} \sum_{j=1}^q
\dDD_{\cdot}\big(\s^{\eps}_j(t,\cdot),\s^{\eps}_j(t,\cdot)\big)\bigg).
\end{align}

We now assume that $Y^{\eps,\delta}$ admits the representation
\begin{equation}\label{eq:expansion_Yepsdel}
Y^{\eps,\delta}_t = \sum_{n=0}^{\infty} \sum_{r=0}^{n} Y^{(r,n-r)}_t \eps^{r}\delta^{n-r},
\end{equation}
for a certain family {$(Y^{(r,n-r)})_{n,r\in\N_0}$} of stochastic processes. In particular,
setting $(\eps,\delta)=(1,1)$, \eqref{eq:expansion_Yepsdel} would yield
\begin{equation}\label{eq:stoch_magnus_exp}
Y_t = \sum_{n=0}^{\infty} Y^{(n)}_t \quad \text{with}\quad Y^{(n)}_t  \coloneqq \sum_{r=0}^{n}
Y^{(r,n-r)}_t.
\end{equation}
\begin{remark}
Note that it is possible to re-order the double series\\ $\sum\limits_{n=0}^{\infty}
\sum\limits_{r=0}^{n} Y^{(r,n-r)}_t$ according to any arbitrary choice, for the latter will be
proved to be absolutely convergent. The above choice for $Y^{(n)}$ contains all the terms of equal
order by weighing $\eps$ and $\delta$ in the same way. A different choice, which respects the
probabilistic relation $\sqrt{\Delta t} \approx  \Delta W_t$, corresponds to weighing $\delta$ as
$\eps^2$. This would lead to setting
\begin{equation}
Y^{(n)}_t \coloneqq \sum_{r=0}^{\floor*{\frac{n}{2}}} Y^{(n-2 r, r)}_t
\end{equation}
in \eqref{eq:stoch_magnus_exp}.
\end{remark}

\begin{remark}\label{rem:initial_condition}
Observe that, if the function $(\eps,\delta)\mapsto Y^{\eps,\delta}_0$ is assumed to be continuous $P$-almost surely, then the initial condition in \eqref{eq:SDE_linear_bbb} implies
\begin{equation}
 Y^{(i,j)}_0 = 0 \quad P\text{-a.s}., \qquad i,j\in\N_0,
\end{equation}
 and thus
 \begin{equation}
Y^{(n)}_0 = 0 \quad P\text{-a.s}., \qquad n\in\N_0.
\end{equation}
\end{remark}
We now plug \eqref{eq:expansion_Yepsdel} into \eqref{eq:SDE_linear_bbb} and collect {all} terms of equal
order in $\eps$ and $\delta$. Up to order $2$ we obtain
%together with Remark \ref{rem:initial_condition}, yields
\begin{align}\label{eq:Y0}
\eps^0\delta^0:&\quad  Y^{(0,0)}_t = 0,\\ \label{eq:Y10}
%\end{equation}
%\begin{equation}
 \eps^1\delta^0:&\quad
 Y^{(1,0)}_t =   % \sum_{j=1}^q
 \int_0^t  A^{(j)}_s  \dd W^j_s , %\qquad t\geq 0,
%\begin{cases}
%\dd Y^{(1,0)}_t =   \dd t +  \sum_{j=1}^q   \dd W^j_t,\qquad t>0 \\
%Y^{(1,0)}_0 = 0
%\end{cases},
\\ \label{eq:Y01}
\eps^0\delta^1:&\quad
Y^{(0,1)}_t =  \int_0^t   B_s   \dd s, %\qquad t\geq 0,
%\begin{cases}
%\dd Y^{(0,1)}_t =   \dd t +  \sum_{j=1}^q   \dd W^j_t,\qquad t>0 \\
%Y^{(0,1)}_0 = 0
%\end{cases},
\\
\eps^2\delta^0:&\quad
 Y^{(2,0)}_t = - \frac{1}{2} %\sum_{j=1}^q
 \int_0^t   \big(A^{(j)}_s\big)^2  \dd s  + \frac{1}{2}  %\sum_{j_1=1}^q \sum_{j_2=1}^q
 \int_0^t
 \Big[ A^{(j)}_s , \int_{0}^{s} A^{(i)}_r \dd {W^{i}_r}  \Big]  \dd W^{j}_s ,\qquad %\red{\textbf{Check!}} %\qquad t\geq 0,
%\begin{cases}
%\dd Y^{(2,0)}_t =   \dd t +  \sum_{j=1}^q   \dd W^j_t,\qquad t>0 \\
%Y^{(2,0)}_0 = 0
%\end{cases},
\\
 \eps^1\delta^1:&\quad Y^{(1,1)}_t =  \frac{1}{2} %\sum_{j=1}^q
 \int_0^t   \Big[ B_s , \int_{0}^{s}
 A^{(j)}_r \dd W_r^j  \Big]  \dd s  + \frac{1}{2}   %\sum_{j=1}^q
 \int_0^t \Big[ A^{(j)}_s ,
 \int_{0}^{s} B_r \dd %W_r
 r  \Big]  \dd {W^{j}_s} ,%\qquad \red{\textbf{Check: there was $W^{j_1}_s$}}
%\begin{cases}
%\dd Y^{(1,1)}_t =   \dd t +  \sum_{j=1}^q   \dd W^j_t,\qquad t>0 \\
%Y^{(1,1)}_0 = 0
%\end{cases},
\\
\eps^0\delta^2:&\quad
Y^{(0,2)}_t =  \frac{1}{2}    \int_0^t
\Big[ B_s , \int_{0}^{s} B_r \dd r  \Big]  \dd s ,
%\begin{cases}
%\dd Y^{(0,2)}_t =   \dd t +  \sum_{j=1}^q   \dd W^j_t,\qquad t>0 \\
%Y^{(0,2)}_0 = 0
%\end{cases}.
\end{align}
for any $t\geq 0$, where we used, one more time, Einstein summation convention to imply summation over the
indexes {$i,j$} %{\bf [We might use $i,j$ instead of $j,j_{1},j_{2}$]}}
and Remark
\ref{rem:initial_condition} to set all the initial conditions equal to zero. Proceeding by
induction, one can obtain a recursive representation for the general term $Y^{(r,n-r)}$ in
\eqref{eq:expansion_Yepsdel}, namely: {\begin{equation}\label{eq:represent_Y_general}
 Y^{(r,n-r)}_t =  \int_0^t \mu^{r,n-r}_s \dd s + %\sum_{j=1}^q
 \int_0^t \sigma^{r,n-r,j}_s \dd W^j_s, \qquad n\in \N_0, \ r=0,\dots,n,
\end{equation}
}%with
%\begin{align}
%I^{0,r,n-r}_t := \int_0^t \mu^{r,n-r}_s \dd s,  &&  I^{1,r,n-r}_t = \int_0^t \sigma^{r,n-r}_s \dd W_s,
%\end{align}
where the terms $\sigma^{r,n-r,j},\mu^{r,n-r}$ are defined recursively as
\begin{align}\label{eq:sigma_general}
%I^{1,r,n-r}_t = \int_0^t \sigma^{r,n-r}_s \dd W_s,&&
\sigma^{r,n-r,j}_s &\coloneqq \sum_{i=0}^{n-1}\frac{\beta_i}{i!}   S^{r-1,n-r,i}_s\big(A^{(j)}\big),\\ \label{eq:mu_general}
\mu^{r,n-r}_s & \coloneqq \sum_{i=0}^{n-1}\frac{\beta_i}{i!}    S^{r,n-r-1,i}_s(B)
 - \frac{1}{2}  \sum_{j=1}^q
  \sum_{i=0}^{{ n-2}}\frac{\beta_i}{i!}  \sum_{q_1=2}^{{ r }} \sum_{q_2=0}^{{ n-r}}  S^{r-q_1,n-r-q_2,i}   \big( Q^{q_1,q_2,j} \big),
\end{align}
with
\begin{align}
 Q^{q_1,q_2,j}_s \coloneqq    \sum_{i_1=2}^{q_1}&\sum_{i_2=0}^{q_2} \sum_{h_1=1}^{i_1-1} \sum_{h_2=0}^{i_2}  \sum_{p_1=0}^{q_1-i_1}
        \sum_{{p_2}=0}^{q_2-i_2}\ \sum_{m_1=0}^{p_1+p_2}
        \ \sum_{{m_2}=0}^{q_1-i_1-p_1+q_2-i_2-p_2} \\
        & \Bigg({
            \frac{S_s^{p_1,p_2,m_1}\big(\s^{h_1,h_2,j}_s\big)}{({m_1}+1)!} \frac{ S_s^{q_1-i_1-p_1,q_2-i_2-p_2,m_2} \big(\s^{i_1-h_1,i_2-h_2,j}_s\big)}{({m_2}+1)!}
           }   \\
        & \qquad\qquad +    {\frac{
                \comm{S_s^{p_1,p_2,m_1}\big(\s^{i_1-h_1,i_2-h_2,j}_s\big)}{S_s^{q_1-i_1-p_1,q_2-i_2-p_2,m_2}\big(\s^{h_1,h_2,j}_s\big)}
            }{
                ({m_1}+{m_2}+2)({m_1}+1)!{m_2}!
            }
        }
    \Bigg),
\end{align}
and with the operators $S$ being defined as
%\begin{align}
%S^{0,0,0}_s(A) = A,&&  S^{r-1,n-r,0}_s(A) = 0\ \text{for } r\neq 1 \ \text{or } n \neq r ,
%\end{align}
\begin{align}
S^{r-1,n-r,0}_s(A) &\coloneqq
\begin{cases}
    A  & \text{if } r=n=1,\\
    0  & \text{otherwise},
\end{cases}\\
%\end{equation}
%and
%\begin{align}
S^{r-1,n-r,i}_s(A)  &\coloneqq %\sum_{\substack{ m_1,\cdots, m_i\in \{0,1\}\\ m_1+\cdots+m_i=l}}
\sum_{\substack{(j_1,k_1),\dots,(j_i,k_i) \in\N_0^2 %\\ (k^0_1,j^0_1),\cdots,(k^0_{i-l},j^0_{i-l})\in\N_0^2
\\  j_1 + \cdots %+ k^1_l + k^0_1 + \cdots
+  j_i  = r-1
\\  k_1+ \cdots %+ j^1_l + j^0_1 + \cdots
 +k_{i} = n-r
}}     \big[Y^{(j_1,k_1)}_s ,  \big[ \dots , \big[  Y^{(j_i,k_i)}_s, A_s   \big] \dots \big]  \big]  \\
&= \sum_{\substack{(j_1,k_1),\dots,(j_i,k_i) \in\N_0^2 %\\ (k^0_1,j^0_1),\cdots,(k^0_{i-l},j^0_{i-l})\in\N_0^2
\\  j_1 + \cdots %+ k^1_l + k^0_1 + \cdots
+  j_i  = r-1
\\  k_1+ \cdots %+ j^1_l + j^0_1 + \cdots
k_{i} = n-r
}}   \ad_{Y^{(j_1,k_1)}_s}%\circ \ad_{I^{m_2,k_2}_s}
\circ \cdots \circ \ad_{Y^{(j_i,k_i)}_s}(A_s) , \qquad i\in\N.
\end{align}
\begin{remark}
All the processes $Y^{(r,n-r)}$, with $n\in\N$ and $r=0,\dots,n$, are well defined according to
the recursion \eqref{eq:represent_Y_general}-\eqref{eq:sigma_general}-\eqref{eq:mu_general}, as
long as $B$ and $A^{(1)},\dots, A^{(q)} $ are bounded and progressively measurable stochastic
processes.
\end{remark}
\begin{example}
As we already pointed out in the introduction, in the case $j=1$ and $B\equiv 0$, the SDE \eqref{eq:SDE_linear_b} admits an explicit solution given by
\begin{equation}
 Y_t = -\frac{1}{2} A^2 t + A W_t , \qquad t\geq 0,
\end{equation}
and the terms in the ME \eqref{eq:stoch_magnus_exp} read as
\begin{equation}
Y^{(1)}_t = A W_t,\qquad Y^{(2)}_t = -\frac{1}{2} A^2 t,\qquad Y^{(n)}_t=0, \ n\geq 3.
\end{equation}
In particular, the ME coincides with the exact solution with the first two terms.
\end{example}

\subsection{Convergence analysis}\label{sec:convergence} In this section we prove Theorem
\ref{th:convergence}. {To avoid ambiguity, only in this section, w}e denote by $\Md_{\R}$ and
$\Md_{\mathbb{C}}$ the spaces of $(d\times d)$-matrices with real and complex entries,
respectively; on these spaces we shall make use of the Frobenius %(Euclidean entrywise)
norm denoted by $\fnorm{\cdot}$. We say that a matrix-valued function is holomorphic if all its entries
are holomorphic functions. {We recall that
%For fixed $\e,\d\in\mathbb{C}$, we consider the SDE
%\begin{equation}\label{eq:SDE_linear_eps2}
% \dd X^{\eps,\delta}_t =\delta B_t X_t^{\e,\d} dt+ \eps A^{(j)}_t X_t^{\e,\d} \dd W^j_t,\qquad
% X^{\eps,\delta}_0 = I_d,
%\end{equation}
$W=(W^1,\dots, W^q)$ is a $q$-dimensional standard Brownian motion and $A^{(1)},\dots, A^{(q)},B$
are bounded $\Md_{\R}$-valued progressively measurable stochastic processes defined on a filtered
probability space $(\Omega,\mathcal{F},P,(\mathcal{F}_t)_{t\geq 0})$. Also recall that, for any
$\Md_{\mathbb{R}}$-valued process $M=(M_t)_{t\in[0,T]}$, we set
$\norm{M}_{T}\coloneqq\norm{\fnorm{M}}_{L^{\infty}([0,T]\times \O)}$.

%As in Section \ref{sec:formal_expansion}, we associate to \eqref{eq:SDE_linear_eps} the auxiliary
%SDE
%\begin{equation}\label{eq:SDE_linear_bbb2}
% \dd Y^{\eps,\delta}_t = \mu^{\eps,\delta}\big(t,Y^{\eps,\delta}_t\big) \dd t +
% \sigma_{j}^{\eps}\big(t,Y^{\eps,\delta}_t\big) \dd W^j_t,\qquad Y^{\eps,\delta}_0 = 0,
%\end{equation}
%where the coefficients $\mu^{\eps,\delta}$ and $\sigma_j^{\eps}$ are defined as in
%\eqref{ae1}-\eqref{ae2}.

%For any $\eps,\delta \in\mathbb{C}$ consider the complex matrix-valued SDE
%\begin{equation}\label{eq:SDE_linear_complex}
%\dd X^{\eps,\delta}_t =
%\sum_{j=1}^q \eps A^{(j)}_t X^{\eps,\delta}_t \dd W^j_t+
%\delta B_t X^{\eps,\delta}_t dt
%, \qquad X^{\eps,\delta}_0=I_d.
%\end{equation}
%Clearly, for $\eps=\delta=1$ we obtain the original SDE \eqref{eq:SDE_linear}. In the sequel, we say that a matrix-valued function is holomorphic if all its components are holomorphic functions.  Furthermore, for any $\eps,\delta\in\mathbb{C}$ we fix the following notation:
%\begin{align}
%\eps = u_{\eps} + i v_{\eps},\qquad \delta = u_{\delta} + i v_{\delta}, \qquad \text{with }  u_{\eps},v_{\eps},u_{\delta},v_{\delta}\in \R .   %\\
%\end{align}
We start with two preliminary lemmas.
\begin{lemma}\label{lem:verification}
%Assume that $Y=(Y^{\eps,\delta}_t)_{t\geq 0,\, |\eps|,|\delta|<h}$ is a $\Md_{\R}$-valued
Assume that $Y=(Y^{\eps,\delta}_t)_{\eps,\delta\in\R,\, t\in\R_{\ge0}}$ is a $\Md_{\R}$-valued
stochastic process that can be represented as a convergent series of the form
\eqref{eq:expansion_Yepsdel}. If $Y$ solves the SDE \eqref{eq:SDE_linear_bbb} up to a positive
stopping time $\tau$, then $Y^{(r,n-r)}$ in \eqref{eq:expansion_Yepsdel} are It\^o processes and
satisfy \eqref{eq:represent_Y_general}-\eqref{eq:sigma_general}-\eqref{eq:mu_general} for any
$t<\tau$.
%such that:
%\begin{enumerate}
%\item[(a)] for any $(\eps,\delta)\in D$, $Y^{\eps,\delta}$ ;
%%\item[(b)] $(t,\eps,\delta)\mapsto Y^{\eps,}$ is continuous on $[0,\infty[\times U$.%, $P$-a.s.;
%\item[(b)] $Y^{\eps,\delta}_t$ can be represented as \eqref{eq:expansion_Yepsdel} for any $(t,\eps,\delta)\in [0,\tau[\times U$, %$P$-a.s.,
%for some random coefficients $Y^{(r,n-r)}_t$. %adapted and continuous processes.
%\end{enumerate}
%Then the families %, $P$-a.s.
%%Assume that the real $\Md$-valued random field $(t,\eps,\delta)\mapsto Y^{\eps,}$ is $P$-a.s. continuous. Assume also that, for any $\eps,\delta$ that belongs to a certain neighborhood of $(0,0)$, the process $Y=(Y^{\eps,\delta}_t)_{t\geq 0}$ solves \eqref{eq:SDE_linear_bbb}.
\end{lemma}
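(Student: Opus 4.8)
The plan is to read the given dynamics \eqref{eq:SDE_linear_bbb} in integral form,
\begin{equation*}
Y^{\eps,\delta}_t = \int_0^t \mu^{\eps,\delta}(s,Y^{\eps,\delta}_s)\,\dd s + \int_0^t \sigma^{\eps}_j(s,Y^{\eps,\delta}_s)\,\dd W^j_s, \qquad t<\tau,
\end{equation*}
as an identity between two functions of the parameters $(\eps,\delta)$, and to extract and match their Taylor coefficients at $(\eps,\delta)=(0,0)$. By hypothesis the left-hand side is the convergent series \eqref{eq:expansion_Yepsdel}, whose coefficient of $\eps^r\delta^{n-r}$ is exactly $Y^{(r,n-r)}_t$; so the heart of the argument is to compute the corresponding coefficient of the right-hand side and to verify that it equals $\int_0^t \mu^{r,n-r}_s\,\dd s + \int_0^t \sigma^{r,n-r,j}_s\,\dd W^j_s$, with $\mu^{r,n-r},\sigma^{r,n-r,j}$ as in \eqref{eq:sigma_general}--\eqref{eq:mu_general}.

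First I would expand the coefficient fields. Substituting the series \eqref{eq:expansion_Yepsdel} into the defining formulas \eqref{ae1}--\eqref{ae2} and using that each $\ad^i$ is $i$-linear in its subscript, the nested bracket $\ad^i_{Y^{\eps,\delta}_s}(A^{(j)}_s)$ expands into a sum over multi-indices $(j_1,k_1),\dots,(j_i,k_i)$ whose coefficient of $\eps^a\delta^b$ is precisely the operator $S^{a,b,i}_s(A^{(j)})$. Carrying the leading factor $\eps$ in \eqref{ae1} through, the coefficient of $\eps^r\delta^{n-r}$ in $\sigma^{\eps}_j(s,Y^{\eps,\delta}_s)$ becomes $\sum_i \frac{\bern_i}{i!}S^{r-1,n-r,i}_s(A^{(j)})$, which is \eqref{eq:sigma_general}; the sum truncates at $i=n-1$ because $Y^{(0,0)}_s=0$ forces every contributing multi-index to carry total weight at least one. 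The drift is treated in the same spirit: its $B$-term yields $\sum_i \frac{\bern_i}{i!}S^{r,n-r-1,i}_s(B)$ (the $\delta$-index shifted by the explicit factor $\delta$), while the bilinear term $\dDD$ in \eqref{ae2}, expanded through \eqref{eq:operator_ddeB_bis} with the series substituted in every $\ad$-slot and in the two diffusion arguments, reproduces exactly the array $Q^{q_1,q_2,j}_s$ and hence \eqref{eq:mu_general}. This step is a purely algebraic rearrangement of absolutely convergent multiple series.

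With the coefficients identified, I would justify interchanging the extraction of the $(\eps,\delta)$-Taylor coefficient with the two time integrals. On the stochastic interval $[0,t\wedge\tau)$ the series \eqref{eq:expansion_Yepsdel} converges and the processes involved remain bounded, so the multiple series defining $\mu^{\eps,\delta},\sigma^{\eps}_j$ converge absolutely; this permits interchanging summation with the Lebesgue integral by dominated convergence and with the It\^o integral by It\^o's isometry ($L^2$-convergence of the partial sums on $[0,t\wedge\tau)$). Matching coefficients of $\eps^r\delta^{n-r}$ then produces the integral recursion \eqref{eq:represent_Y_general}; the matching itself rests on uniqueness of Taylor coefficients for the (path-continuous, hence convergent on a common event of full probability) power series on both sides. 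Finally, since $\mu^{r,n-r}$ and $\sigma^{r,n-r,j}$ are built from commutators and products of the bounded, progressively measurable processes $A^{(j)},B$ and the lower-order $Y^{(a,b)}$, they are themselves progressively measurable and locally bounded, so each $Y^{(r,n-r)}$ is an It\^o process, as claimed.

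I expect the main obstacle to be twofold and intertwined: the bookkeeping needed to check that naive coefficient collection reproduces the precise combinatorial structure of $S^{\cdot,\cdot,i}$ and $Q^{q_1,q_2,j}$ — in particular tracking the shifted indices $r-1$ and $n-r-1$, the lower bound $q_1\ge 2$ forced by the two diffusion factors, and the Bernoulli weights — together with the justification of the interchange for the stochastic integral, where uniform convergence in $\omega$ is not automatic and one must exploit boundedness up to $\tau$ to obtain the $L^2$-control required by It\^o's isometry. By contrast, the identity-theorem step is routine once both sides are recognized as convergent power series in $(\eps,\delta)$ with continuous coefficients.
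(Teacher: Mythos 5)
Your strategy is sound, and its algebraic core is exactly the paper's formal derivation in Section \ref{sec:formal_expansion}: substituting \eqref{eq:expansion_Yepsdel} into \eqref{ae1}--\eqref{ae2}, using multilinearity of the iterated brackets, and collecting powers does reproduce \eqref{eq:sigma_general}--\eqref{eq:mu_general}, including the shifted indices and the constraint $q_1\ge 2$. But your analytic route is genuinely different from the paper's. The paper never interchanges an infinite sum with a stochastic integral: it evaluates at $(\eps,\delta)=(0,0)$ to get $Y^{(0,0)}\equiv 0$; then, for instance for $Y^{(1,0)}$, it sets $\delta=0$ and writes the SDE as $Y^{\eps,0}_t = \eps \int_0^t A^{(j)}_s \dd W^j_s + \eps R^\eps_t$, where $R^\eps$ lumps together all higher-order contributions; since $R^0\equiv 0$ and $\eps\mapsto R^\eps_t$ is continuous at $0$ (inferred from the continuity of $\eps\mapsto Y^{\eps,0}_t$ granted by the convergent series), dividing by $\eps$ and letting $\eps\to 0$ identifies the coefficient, and similarly for $Y^{(0,1)}$; the general term is then claimed by induction, with details omitted. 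So the paper handles each order with finitely many terms plus a vanishing remainder, at the price of inductive bookkeeping (and of a continuity-of-the-remainder step that is itself asserted rather than proved), whereas you get all orders at once but must pay for the interchange of summation and integration.

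It is in paying for that interchange that your proof has a concrete gap: you assert that ``the processes involved remain bounded'' on $[0,t\wedge\tau)$, but no such hypothesis appears in the lemma. Only $A^{(j)}$ and $B$ are bounded; the lemma assumes merely that the series \eqref{eq:expansion_Yepsdel} converges and that $Y$ solves \eqref{eq:SDE_linear_bbb} up to $\tau$, and neither gives a deterministic (in $\omega$) bound on $Y^{\eps,\delta}$ or on the coefficients $Y^{(r,n-r)}$. Hence dominated convergence in $\omega$ and the It\^o isometry, which requires $E\int \fnorm{\cdot}^2 \dd s<\infty$, are not directly available. The repair is standard but must actually be carried out: convergence of \eqref{eq:expansion_Yepsdel} at some point gives, via Abel-type estimates, random bounds of the form $\fnorm{Y^{(r,n-r)}_t}\le C_t(\omega)\rho^{-n}$; one then localizes with stopping times $\tau_m=\inf\{t: C_t>m\}\wedge\tau$, performs the interchange in $L^2(\Omega\times[0,t\wedge\tau_m])$, and lets $m\to\infty$. (One also needs a single null set off which the coefficient matching holds for all $(\eps,\delta)$ simultaneously, e.g.\ via a countable dense set of parameters --- a point you do flag.) With that localization inserted your argument closes; as written, the blanket boundedness claim is the one step that would not survive scrutiny, and it is precisely the step that the paper's order-by-order continuity argument is designed to avoid.
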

\begin{proof}
We prove \eqref{eq:represent_Y_general}-\eqref{eq:sigma_general}-\eqref{eq:mu_general} only for
$n=0,1$. Namely, we show that \eqref{eq:Y0}, \eqref{eq:Y10} and \eqref{eq:Y01} hold up to time
$\tau$, $P$-a.s. The representation for the general term $Y^{(r,n-r)}$ can be proved by induction;
we omit the details for brevity.

Since $Y$ is of the form \eqref{eq:expansion_Yepsdel} then $Y^{(0,0)}_t = Y^{0,0}_t$ for any
$t<\tau$. Moreover, since $Y$ solves the SDE \eqref{eq:SDE_linear_bbb} then $Y^{0,0}\equiv 0$ on
$[0,\tau[$, $P$-a.s. Thus \eqref{eq:Y0} holds up to time $\tau$, $P$-a.s.

Now, \eqref{eq:SDE_linear_bbb} yields
\begin{align}
Y^{\eps,0}_t %& =\int_{0}^t \bigg(    \eps  A^{(j)}_s + \eps \sum_{k=1}^{\infty}
%\frac{\bern_k}{k!}\ad^k_{Y^{\eps,0}_s}\big(A^{(j)}_s\big)    \bigg) \dd W^j_s \\ &\quad -
%\frac{\eps^2}{2} \int_0^t \dD_{Y^{\eps,0}_s}^{-1} \Bigg(
%                {
%                    \dDD_{Y^{\eps,0}_s}\bigg(  \sum_{k=0}^{\infty} \frac{\bern_k}{k!}\ad^k_{Y^{\eps,0}_s}\big(A^{(j)}_s\big)     , \sum_{k=0}^{\infty} \frac{\bern_k}{k!}\ad^k_{Y^{\eps,0}_s}\big(A^{(j)}_s\big)     \bigg)
%                }  \Bigg) \dd s
%  \intertext{($A$ is progressively measurable and bounded)}
               & = \eps \int_{0}^t    A^{(j)}_s  \dd W^{j}_s + \eps R^{\eps}_t , \qquad t\in[0,\tau[, \quad P\text{-a.s.},
               \label{eq:ste123}
\end{align}
%$P$-a.s.,
where
\begin{multline}
 R^{\eps}_t  = \int_{0}^t \bigg(    \sum_{k=1}^{\infty} \frac{\bern_k}{k!}\ad^k_{Y^{\eps,0}_s}\big(A^{(j)}_s\big)    \bigg) \dd W^j_s \\- \frac{\eps}{2}  \int_0^t \dD_{Y^{\eps,0}_s}^{-1} \Bigg(
                %\sum_{j=1}^{q}
                    \dDD_{Y^{\eps,0}_s}\bigg(  \sum_{k=0}^{\infty} \frac{\bern_k}{k!}\ad^k_{Y^{\eps,0}_s}\big(A^{(j)}_s\big)     , \sum_{k=0}^{\infty} \frac{\bern_k}{k!}\ad^k_{Y^{\eps,0}_s}\big(A^{(j)}_s\big)     \bigg)
                 \Bigg) \dd s.
\end{multline}
Note that, again by \eqref{eq:SDE_linear_bbb}, $R^{0}\equiv 0$ $P$-a.s. Moreover, representation
\eqref{eq:expansion_Yepsdel} implies continuity of $\eps\mapsto Y^{\eps,0}_t$ near $\eps=0$, which
in turn implies the continuity of $\eps\mapsto R^{\eps}_t$. Thus we have $\lim\limits_{\eps\to
0}R^\eps_t=R^0_t$
%\begin{equation}
%R^\eps_t \longrightarrow R^{0}_t = 0 \quad \text{as } \eps\to 0, \qquad t\geq 0,
%\end{equation}
$P$-a.s. This, together with \eqref{eq:ste123} and \eqref{eq:expansion_Yepsdel} implies that
\eqref{eq:Y10} necessarily holds, up to time $\tau$, $P$-a.s.

Similarly, \eqref{eq:SDE_linear_bbb} yields
\begin{align}
Y^{0,\delta}_t %& = \int_0^t \bigg(  \delta  B_s + \delta \sum_{k=1}^{\infty} \frac{\bern_k}{k!}\ad^k_{Y^{0,\delta}_s} (B_s ) \bigg)  \dd s
%\intertext{($B$ is progressively measurable and bounded)}
 = \delta \int_0^t  B_s \dd s + \delta Q^{\delta}_t , \qquad t\in[0,\tau[, \quad P\text{-a.s.},
\end{align}
with
\begin{equation}
Q^{\delta}_t  =     \int_0^t \bigg(  \sum_{k=1}^{\infty} \frac{\bern_k}{k!}\ad^k_{Y^{0,\delta}_s}
(B_s ) \bigg)  \dd s
\end{equation}
and the same argument employed above yields \eqref{eq:Y01} up to time $\tau$, $P$-almost surely.
\end{proof}
%\begin{lemma}\label{lem:log_matrix}
%Let $M\in\mathcal{M}_{\mathbb{C}}^{d\times d}$ be invertible with no non-positive real eigenvalues, i.e. $\lambda \in \mathbb{C}\setminus ]-\infty,0]$ for any $\lambda$ eigenvalue of $M$.
% Then $M$ admits a {\blue(unique ?)} logarithm, which is %. In particular, we have
%\begin{equation}\label{eq:log_M}
%\log M = (M - I_d) \int_0^{\infty} \frac{1}{1+\mu} (\mu I_d + M)^{-1} \dd \mu.
%\end{equation}
%\end{lemma}

%We set $\norm{B}_{\infty}=\||B|\|_{L^{\infty}(\R_{\ge0}\times \O)}$.
%We recall the notation  on $\mathcal{M}_{\mathbb{C}}^{d\times d}$.
\begin{lemma}\label{lem:log_matrix}
Let $M\in\mathcal{M}_{\mathbb{C}}^{d\times d}$ be {nonsingular} and such that $\snorm{ M - I_d} < 1$
where $\snorm{\cdot}$ is the spectral norm. Then $M$ has a unique logarithm, which is
\begin{align}\label{eq:log_M}
\log M &=  \sum_{n=1}^{\infty} (-1)^{n+1} \frac{(M-I_d)^n}{n} \\ &=  (M - I_d) \int_0^{\infty}
\frac{1}{1+\mu} (\mu I_d + M)^{-1} \dd \mu .
\end{align}
In particular, we have
\begin{equation}\label{estimate_log}
 \snorm{\log M}\leq - \log\big( 1 - \snorm{ M - I_d }  \big).
\end{equation}
\end{lemma}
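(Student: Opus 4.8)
The plan is to reduce everything to the scalar theory of the principal logarithm, exploiting that every matrix in play is a function of $N\coloneqq M-I_d$; the hypothesis then reads $\snorm{N}<1$ (which already forces $M$ to be nonsingular, by the Neumann series). I would first dispose of convergence and the estimate together. Since $\snorm{N^n}\le\snorm{N}^n$, the series $L\coloneqq\sum_{n=1}^{\infty}(-1)^{n+1}N^n/n$ converges absolutely in the spectral norm, and term-by-term
\begin{equation*}
\snorm{L}\le\sum_{n=1}^{\infty}\frac{\snorm{N}^n}{n}=-\log\big(1-\snorm{N}\big),
\end{equation*}
which is exactly \eqref{estimate_log}. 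This is the routine part.

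Next I would show $e^{L}=M$ without any combinatorial rearrangement of $\exp\circ\log$, by a one-parameter deformation. Set $L(t)\coloneqq\sum_{n\ge1}(-1)^{n+1}t^nN^n/n$ for $t\in[0,1]$; since $\snorm{tN}<1$, the matrix $I_d+tN$ is invertible and $L'(t)=\sum_{n\ge1}(-1)^{n+1}t^{n-1}N^n=N(I_d+tN)^{-1}$. The key point is that every matrix here is a power series in $N$, so they all commute; this licenses the scalar rule $\tfrac{d}{dt}e^{L(t)}=L'(t)\,e^{L(t)}$ and a short computation then gives $\tfrac{d}{dt}\big(e^{L(t)}(I_d+tN)^{-1}\big)=0$. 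As the bracket equals $I_d$ at $t=0$, I conclude $e^{L(t)}=I_d+tN$ on $[0,1]$, and at $t=1$ this is $e^{L}=M$. The same computation yields the by-product $L=\int_0^1 N(I_d+tN)^{-1}\,dt$, which I will use to connect with the integral formula.

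For the integral representation I would first note that $\snorm{N}<1$ places every eigenvalue $\lambda$ of $M$ in $\{|\lambda-1|<1\}\subset\{\operatorname{Re}\lambda>0\}$, so $\mu I_d+M$ is invertible for all $\mu\ge0$ and the integrand decays like $\mu^{-2}$, whence the integral converges. To identify it with $L$, I would argue that all three objects are the \emph{same} holomorphic function of $M$: the principal logarithm $\operatorname{Log}$ is holomorphic on a neighbourhood of $\operatorname{spec}(M)$; the power series $\sum(-1)^{n+1}(z-1)^n/n$ equals $\operatorname{Log} z$ on $\{|z-1|<1\}$; and the partial-fraction identity $(\lambda-1)\int_0^{\infty}\frac{d\mu}{(1+\mu)(\mu+\lambda)}=\operatorname{Log}\lambda$ holds there as well. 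By the holomorphic functional calculus (or, more elementarily, by checking the identity on the dense set of diagonalizable $M$ and passing to the limit) the two matrix expressions coincide with $\operatorname{Log}(M)$.

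Finally, uniqueness, which I expect to be the delicate step. Because the eigenvalues of $L$ are $\operatorname{Log}\lambda$ with $|\operatorname{Im}\operatorname{Log}\lambda|<\pi/2$, I read ``unique logarithm'' as the principal one: the unique $L'$ with $e^{L'}=M$ and $\operatorname{spec}(L')\subset\{|\operatorname{Im} z|<\pi\}$. Given such an $L'$, it commutes with $M=e^{L'}$; and $L$, being a convergent power series in $M$ (hence a limit of polynomials in $M$), commutes with everything that commutes with $M$, so $L$ and $L'$ commute. Commuting complex matrices are simultaneously triangularizable, so in a common basis the diagonal entries of $L-L'$ are differences of eigenvalues of $L$ and $L'$, each with imaginary part in $(-\pi,\pi)$, hence in $(-2\pi,2\pi)$. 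On the other hand commutativity gives $e^{L-L'}=e^{L}e^{-L'}=MM^{-1}=I_d$, forcing every eigenvalue of $L-L'$ into $2\pi i\Z$; the only such value in the strip is $0$, so $L-L'$ is nilpotent. A nilpotent $Z$ with $e^{Z}=I_d$ must vanish, since $e^{Z}-I_d=Z\,U$ with $U=\sum_{k\ge0}Z^k/(k+1)!$ invertible, whence $L'=L$. This simultaneous-triangularization and eigenvalue bookkeeping is the main obstacle; the remaining steps are standard estimates and functional-calculus facts.
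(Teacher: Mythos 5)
Your proof is correct, and it is considerably more self-contained than the paper's, which is essentially a citation-plus-one-verification. The paper's own proof declares the power series representation ``a standard result'', obtains the integral representation from the Jordan factorization $M=VJV^{-1}$ --- the classical formula for the principal logarithm of any matrix with no eigenvalues in $\left]-\infty,0\right]$ --- and the only thing it actually checks is that $\snorm{M-I_d}<1$ excludes non-positive real eigenvalues, via $1>\snorm{Mv-v}=\abs{\lambda-1}$ for a normalized eigenvector $v$ of a real eigenvalue $\lambda$; the estimate \eqref{estimate_log} and the uniqueness claim are left implicit as part of the classical theory. You instead rebuild the whole package: the norm estimate by term-wise bounding of the series; the identity $e^{L}=M$ via the commuting one-parameter family $L(t)$ and the vanishing derivative of $e^{L(t)}(I_d+tN)^{-1}$ (the commutativity of all matrices involved, being power series in $N=M-I_d$, is exactly what licenses the scalar chain rule, and you say so); the integral formula via the scalar partial-fraction identity plus density of diagonalizable matrices, thereby avoiding Jordan form altogether; and --- what the paper never spells out --- an actual uniqueness proof, correctly read as uniqueness among logarithms with spectrum in the strip $\{\,z: \abs{\operatorname{Im}z}<\pi\,\}$, obtained by commutation, simultaneous triangularization, the eigenvalue bookkeeping placing the spectrum of $L-L'$ in $2\pi i\mathbb{Z}$ intersected with that strip, and the observation that a nilpotent $Z$ with $e^{Z}=I_d$ vanishes. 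Both arguments pivot on the same spectral localization (the spectrum of $M$ lies in the disc $\abs{z-1}<1$, hence off the negative reals, and in fact in the right half-plane); yours is longer but buys a complete, Jordan-free proof, and in particular your reformulation of ``unique logarithm'' is not pedantry: over $\mathbb{C}$ the statement is false without the strip restriction (e.g.\ $L+2\pi i I_d$ is another logarithm), so your reading repairs the wording of the lemma rather than merely interpreting it.
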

%\begin{proof}
%The existence {\blue(and uniqueness ?)} can be found in \citep{gantmakher1959theory}. The representation \eqref{eq:log_M} stems from the factorization $M=V J V^{-1}$ with $J$ in Jordan form.
\proof The first representation is a standard result. The second representation stems from the
factorization $M=V J V^{-1}$ with $J$ in Jordan form, under the assumption that $M$ has no
non-positive real eigenvalues, i.e. $\lambda \in \mathbb{C}\setminus ]-\infty,0]$ for any
$\lambda$ eigenvalue of $M$. This last property, however, is ensured by the assumption $\snorm{ M - I_d
} < 1$. Indeed, the latter implies
\begin{equation}
 \fnorm{M v - v} < 1 , \qquad v\in\R^d,\ \enorm{v}=1,
\end{equation}
which in turn implies that, if $\l$ is a real
eigenvalue of $M$ and $v$ is one of its normalized eigenvectors, then %by \eqref{ae1} we have
  $$1>\fnorm{M v - v}=\enorm{\l v-v}=\abs{\l-1}.$$
%\end{proof}
\endproof
We have one last preliminary lemma, containing some technical results concerning the solutions to
\eqref{eq:SDE_linear_eps}. % and the relative flows $(t,\eps,\delta)\mapsto Y^{\eps,\delta}_t$.
These are semi-standard, in that they can be inferred by combining and adapting existing results
in the literature. %For sake of completeness, we report a sketch of the proof in Appendix \ref{app:proof_lemma}.
%In the following statement,
\begin{lemma}\label{lem:preliminary}
For any $T>0$ and $\e,\d\in\mathbb{C}$, the SDE \eqref{eq:SDE_linear_eps} has a unique strong
solution $(X^{\eps,\delta}_t)_{t\in[0,T]}$. For any $p\ge 1$ and $h>0$ there exists a positive
constant $\kappa$, only dependent on $\|A^{(1)}\|_{T},\dots,\|A^{(q)}\|_{T}$, $\|B\|_{T}$, $d$,
$T$, $h$ and $p$, such that
\begin{align}\label{eq:estimates_holder_eps_delta}
 E\Big[  \fnormp{  X^{\eps,\delta}_t - X^{\eps',\delta'}_{s}} \Big]& \leq
 \kappa {\big( \abs{t-s}^p +\left(\abs{\eps-\eps'}+\abs{\delta-\delta'}\right)^{2p} \big)} ,\\
 \label{eq:estimates_Xepsdel}
 E\Big[  \sup_{0 \leq u\leq t}  \fnormp{  X^{\eps,\delta}_u - X^{\eps,\delta}_{0}  } \Big]
 &\leq  \kappa t^{p} (\abs{\eps} + \abs{\delta})^{2p}, %e^{\kappa (|\eps| + |\delta|) t}
\end{align}
for any $0\le t,s\le T$ and $\eps,\delta,\eps',\delta'\in \mathbb{C} $ with
$\abs{\eps},\abs{\delta},\abs{\eps'},\abs{\delta'}\le h$.

Up to modifications, $(X^{\eps,\delta}_t)_{\e,\d\in\mathbb{C},\, t\in[0,T]}$ is a continuous
process such that:
\begin{itemize}
  \item[i)] for any $t\in[0,T]$, the function
  $(\eps,\delta)\mapsto X^{\eps,\delta}_t$ is holomorphic;
  \item[ii)] the functions $(t,\eps,\delta)\mapsto \partial_{\eps} X^{\eps,\delta}_t$ and $(t,\eps,\delta)\mapsto \partial_{\delta} X^{\eps,\delta}_t$ are
  continuous;
  \item[iii)] for any $p\ge1$ and $h>0$ there exists a positive constant $\kappa$ only dependent on\\
  $\|A^{(1)}\|_{T},\dots,\|A^{(q)}\|_{T}$, $\|B\|_{T}$, $d$, $T$,
$h$ and $p$, such that
\begin{equation}\label{eq:derestimates_Xepsdel}
 E\Big[  \sup_{0 \leq s\leq t} \Big\{  \fnormp{   \partial_{\eps} X^{\eps,\delta}_s  }
 + \fnormp{  \partial_{\delta} X^{\eps,\delta}_s  }  \Big\} \Big] \leq
   \kappa t^p(\abs{\eps} + \abs{\delta})^p,  %{\blue\qquad\text{is this correct??}}
\end{equation}
for any $t\in[0,T]$ and $\abs{\eps},\abs{\delta}\le h$.
\end{itemize}
\end{lemma}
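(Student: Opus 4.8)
The plan is to treat \eqref{eq:SDE_linear_eps} as a linear SDE with bounded, progressively measurable coefficients and to base every assertion on a single foundational moment bound. For complex $\eps,\delta$ one splits the equation into real and imaginary parts, obtaining a real linear system of doubled dimension whose coefficients remain bounded and progressively measurable; the classical theory for SDEs with (here linear, hence globally Lipschitz) coefficients then gives a unique strong solution $(X^{\eps,\delta}_t)_{t\in[0,T]}$. First I would establish, by applying It\^o's formula to $\fnorm{X^{\eps,\delta}_t}^{2p}$, the Burkholder--Davis--Gundy inequality to the martingale part and Gronwall's lemma, the uniform moment bound $\sup_{t\le T}E\big[\fnormp{X^{\eps,\delta}_t}\big]\le C$ valid for $\abs\eps,\abs\delta\le h$, with $C$ depending only on the quantities listed in the statement. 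This bound is the common ingredient for all the estimates that follow.

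For \eqref{eq:estimates_Xepsdel} I would write $X^{\eps,\delta}_u-I_d$ in integral form, take $\fnorm{\cdot}^{2p}$, pass to the supremum over $u\in[0,t]$ and then to expectation, bounding the drift by H\"older's inequality and the stochastic term by Burkholder--Davis--Gundy; inserting the foundational moment bound, the factor $(\abs\eps+\abs\delta)^{2p}$ is produced by the coefficients $\delta B,\eps A^{(j)}$ and the factor $t^p$ by the time integration. For the joint estimate \eqref{eq:estimates_holder_eps_delta} I would split $X^{\eps,\delta}_t-X^{\eps',\delta'}_s$ into the time increment $X^{\eps,\delta}_t-X^{\eps,\delta}_s$, handled exactly as above to yield the term $\abs{t-s}^p$, and the parameter increment $Z:=X^{\eps,\delta}-X^{\eps',\delta'}$. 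The latter solves the linear SDE $\dd Z_t=\delta B_t Z_t\,\dd t+\eps A^{(j)}_t Z_t\,\dd W^j_t+(\delta-\delta')B_t X^{\eps',\delta'}_t\,\dd t+(\eps-\eps')A^{(j)}_t X^{\eps',\delta'}_t\,\dd W^j_t$ with $Z_0=0$; applying the same moment machinery and Gronwall to $Z$ turns the size of the forcing into the factor $(\abs{\eps-\eps'}+\abs{\delta-\delta'})^{2p}$.

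Since \eqref{eq:estimates_holder_eps_delta} holds for every $p$, I would apply the Kolmogorov continuity theorem to the $\Md_\C$-valued field $(t,\eps,\delta)\mapsto X^{\eps,\delta}_t$ over its five real parameters (time together with the real and imaginary parts of $\eps$ and $\delta$), choosing $p$ large enough to beat the dimension; this produces a jointly continuous modification, which I fix once and for all. To obtain holomorphicity (i) I would run a difference-quotient argument: for $h\in\C$, the increment $D_h:=X^{\eps+h,\delta}-X^{\eps,\delta}$ satisfies $\dd(D_h/h)=\delta B_t (D_h/h)\,\dd t+\eps A^{(j)}_t (D_h/h)\,\dd W^j_t+A^{(j)}_t X^{\eps+h,\delta}_t\,\dd W^j_t$ with zero initial datum, and by Burkholder--Davis--Gundy and Gronwall $D_h/h$ converges in $L^{2p}$ to the solution $V$ of the same SDE with $X^{\eps+h,\delta}$ replaced by $X^{\eps,\delta}$. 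Because $h$ enters this equation only through $X^{\eps+h,\delta}$, whose $L^{2p}$-limit as $h\to0$ is $X^{\eps,\delta}$ irrespective of the direction of $h$ in $\C$, the complex difference quotient converges to the same $V$ for every direction, which is exactly complex differentiability; thus $(\eps,\delta)\mapsto X^{\eps,\delta}_t$ is holomorphic.

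The limit $V$ found above is, by construction, the solution of the linear SDE obtained by formally differentiating \eqref{eq:SDE_linear_eps} in $\eps$, namely $\dd V_t=\delta B_t V_t\,\dd t+\eps A^{(j)}_t V_t\,\dd W^j_t+A^{(j)}_t X^{\eps,\delta}_t\,\dd W^j_t$, $V_0=0$, and symmetrically for $\partial_\delta X$; these are well-posed inhomogeneous linear SDEs. For (ii) I would derive for these derivative processes H\"older estimates in $(t,\eps,\delta)$ analogous to \eqref{eq:estimates_holder_eps_delta} and invoke Kolmogorov once more to get continuous modifications, checking that they coincide with the $L^{2p}$-limits of the difference quotients. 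Estimate \eqref{eq:derestimates_Xepsdel} then follows by applying the Burkholder--Davis--Gundy and Gronwall machinery directly to the derivative SDEs, tracking the way $\eps$ and $\delta$ enter the coefficients and the forcing. The step I expect to be most delicate is precisely this passage from the formal derivative SDEs to genuine complex differentiability: one has to make the difference-quotient convergence compatible with the continuous modification fixed after Kolmogorov, so that holomorphicity in $(\eps,\delta)$ and continuity of the derivatives hold simultaneously for one and the same version of the process, which is the same care that underlies regularity results for stochastic flows.
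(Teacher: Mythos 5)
Your overall strategy coincides with the paper's: the paper's own proof is essentially by citation --- existence and the moment bounds \eqref{eq:estimates_holder_eps_delta}--\eqref{eq:estimates_Xepsdel} are quoted from Section 5, Chapter 2 of \cite{krylov2008controlled}, and the continuity/holomorphicity/derivative statements from a refined Kolmogorov continuity theorem (Section 2.3 of \cite{MR3929750}, with a detailed proof in \cite{phdthesis}) --- whereas you reconstruct these arguments (Gronwall and Burkholder--Davis--Gundy moment bounds, the decomposition of the parameter increment $Z=X^{\eps,\delta}-X^{\eps',\delta'}$ as a linear SDE with small forcing, Kolmogorov over the five real parameters, difference quotients for the derivatives). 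The moment estimates and the Kolmogorov step are sound as sketched. However, the step you flag as ``most delicate'' is not merely delicate: it is precisely the content of the refined Kolmogorov theorem the paper invokes, and your proposal leaves it open. Direction-independent convergence of the difference quotients in $L^{2p}$, for each \emph{fixed} $(\eps,\delta)$, does not by itself give that the jointly continuous modification is $P$-a.s.\ holomorphic on all of $B_h(0)$: the exceptional null set a priori depends on the point and on the direction, and $B_h(0)$ is uncountable. A standard way to close this: $L^{2p}$-differentiability with locally bounded derivative means the map $\eps\mapsto X^{\eps,\delta}_t$ is holomorphic as a Banach-space-valued function, so its contour integrals vanish (Cauchy's theorem); for the continuous modification the same contour integrals are a.s.\ limits of the same Riemann sums, hence vanish a.s.\ for each fixed contour; running over a countable family of contours and applying Morera (variable by variable, plus continuity, for joint holomorphicity) yields a.s.\ holomorphicity of the single fixed modification. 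Without this device (or an equivalent one, e.g.\ applying Kolmogorov to the difference-quotient field), part (i) and the compatibility of (ii) with the chosen modification are not proved.

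There is also one concrete claim in your sketch that cannot be carried out: the factor $(\abs{\eps}+\abs{\delta})^p$ in \eqref{eq:derestimates_Xepsdel} does not follow from ``tracking the way $\eps$ and $\delta$ enter the coefficients and the forcing'', because the forcing term of the derivative SDE, $A^{(j)}_t X^{\eps,\delta}_t \dd W^j_t$, carries no factor of $\eps$ or $\delta$. In fact the inequality with that factor is unobtainable: at $(\eps,\delta)=(0,0)$ one has $X^{0,0}\equiv I_d$ and $\partial_{\eps}X^{0,0}_t=\int_0^t A^{(j)}_s\,\dd W^j_s$, whose supremum moment is of order $t^p$, while the right-hand side of \eqref{eq:derestimates_Xepsdel} vanishes; so the displayed bound fails unless $A^{(j)}\equiv 0$ (this appears to be an inaccuracy in the paper's statement rather than in your method). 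What your Burkholder--Davis--Gundy/Gronwall computation honestly yields is $E\big[\sup_{0\le s\le t}\{\fnormp{\partial_{\eps}X^{\eps,\delta}_s}+\fnormp{\partial_{\delta}X^{\eps,\delta}_s}\}\big]\le \kappa\, t^p$ uniformly for $\abs{\eps},\abs{\delta}\le h$, and this is exactly the bound used in the proof of part (iii) of Theorem \ref{th:convergence}, so nothing downstream is affected; but you should state that bound rather than claim to derive the paper's displayed one.
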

\proof Existence of the solution and estimates
\eqref{eq:estimates_holder_eps_delta}-\eqref{eq:estimates_Xepsdel} of the moments follow from the
results in Section 5, Chapter 2 in \cite{krylov2008controlled} (in particular, see Corollary 5 on
page 80 and Theorem 7 on page 82).

The second part of the statement is a refined version of the Kolmogorov continuity theorem in the
form that can be found for instance in Section 2.3 in \cite{MR3929750}: a detailed proof is
provided in \cite{phdthesis}.
\endproof

\begin{remark}
The existence and uniqueness for the solution to \eqref{eq:SDE_linear_b} is a particular case of
the previous result.
\end{remark}
We are now in the position to prove Theorem \ref{th:convergence}.
\begin{proof}[of Theorem \ref{th:convergence}]
%The strong existence and uniqueness of SDE \eqref{eq:SDE_linear_eps} clearly stems from Lemma \ref{lem:log_matrix}.
We fix {$h>1$,} $T>0$, and let $(X^{\eps,\delta}_t)_{\e,\d\in\mathbb{C},\, t\in[0,T]}$ be the
solution of the SDE \eqref{eq:SDE_linear_eps} as defined in Lemma \ref{lem:preliminary}. Moreover,
for
$t\in\,]0,T]$% and $h>1$
, we set $Q_{t,h}\coloneqq\, ]0,t[ \times B_{h}(0)$ where $B_{h}(0)=\{(\e,\d)\in\mathbb{C}^{2}\mid |(\e,\d)|<h\}$.%,
%with $B$ being an open ball in $\mathbb{C}^2$ centered at $(0,0)$ with radius strictly greater
%than one.
%We also let $(X^{\eps,\delta})_{(\eps,\delta)\in B_2(0)}$ be a family of modifications of the
%solutions to \eqref{eq:SDE_linear_eps} satisfying the properties a), b), c) and d) in Lemma
%\ref{lem:preliminary}-(ii).

\vspace{5pt}

\noindent\underline{Part (i)}: as $X^{\eps,\delta}_0=I_d$, by continuity the random time defined
as
\begin{equation}\label{eq:def_tau}
 \tau \coloneqq \sup \left\{t\in[0,T] \left| \fnorm{ X^{\eps,\delta}_s - I_d} < 1-e^{-\pi} \text{ for any } (s,\eps,\delta)\in Q_{t,h}\right.\right\}
\end{equation}
is strictly positive. Furthermore, again by continuity,
\begin{equation}
 (\tau \leq t) = \bigcup_{ %\substack{
% (s,\eps,\delta)\in ([0,t] \times B_2(0) )\cap \mathbb{Q}^5% \\ (\eps,\delta)\in B_2(0)  \cap \mathbb{Q}^2}
 (s,\eps,\delta)\in \tilde{Q}_{t,h}% \\ (\eps,\delta)\in B_2(0)  \cap \mathbb{Q}^2}
 }  \left( \fnorm{  X^{\eps,\delta}_s -I_d } \geq 1-e^{-\pi }\right),   \qquad t\in[0,T],
\end{equation}
where $\tilde{Q}_{t,h}$ is a countable, dense subset of $Q_{t,h}$, which implies that $\tau$ is a
stopping time.

%there exists a random time $\tau$ such that the cylinder $H_{\tau}:= ]0,\tau[ \times B_{2}(0)$, with $B_{2}(0)$ being the ball in $\mathbb{C}^2$ centered at $(0,0)$ with radius $2$, is such that
%\begin{equation}
% H_{\tau} \subset \{(t,\eps,\delta)\in \R\times\mathbb{C}^2 \text{ such that } \big| X^{\eps,\delta}_t - I_d  \big| < 1 \}.
%\end{equation}
Let $(t,\eps,\delta)\in Q_{\tau,h}$: by Lemma \ref{lem:log_matrix} applied to
$M=X^{\eps,\delta}_t$ we have
\begin{align}\label{eq:Omega_eps_delta}
Y_t^{\eps,\delta} &\coloneqq \log X_t^{\eps,\delta}  =  \sum_{n=1}^{\infty} (-1)^{n+1}
\frac{\big(X_t^{\eps,\delta}-I_d\big)^n}{n} \notag\\&= \big(X_t^{\eps,\delta} - I_d\big)
\int_0^{\infty} \frac{1}{1+\mu} \big(\mu I_d + X_t^{\eps,\delta}\big)^{-1} \dd \mu.
%, \qquad (t,\eps,\delta)\in
%H_{\tau}.
\end{align}
%Also, since $X^{\eps,\delta}_t$ is real for $\eps,\delta\in\R$, then also $Y^{\eps,\delta}_t$ is
%real for any $(t,\eps,\delta)\in [0,\tau[\times \big(B_2(0)\cap \R^2\big)$. In particular, $Y_t =
%Y_t^{1,1}$ is real, and this proves Part (i).
Notice that $X^{\eps,\delta}_t$ (and therefore also $Y^{\eps,\delta}_t$) is real for
$\eps,\delta\in\R$: in particular, $Y_t = Y_t^{1,1}$ is real and this proves Part (i).

\vspace{5pt}

\noindent\underline{Part (ii)}: since $(\eps,\delta)\mapsto X_t^{\eps,\delta}$ is holomorphic, we
can differentiate \eqref{eq:Omega_eps_delta} to infer that $(\eps,\delta)\mapsto
Y_t^{\eps,\delta}$ is holomorphic as well: indeed, we have for $(t,\eps,\delta)\in Q_{\tau,h}$
\begin{align}%\label{eq:Omega_eps_delta}
\partial_{\eps}Y_t^{\eps,\delta} & = \partial_{\eps} X_t^{\eps,\delta}  \int_0^{\infty} \frac{1}{1+\mu} \big(\mu I_d + X_t^{\eps,\delta}\big)^{-1} \dd \mu  \\
&\quad + \big(X_t^{\eps,\delta} - I_d\big)  \int_0^{\infty} \frac{1}{1+\mu} \big(\mu I_d +
X_t^{\eps,\delta}\big)^{-1}  \big( \partial_{\eps}X_t^{\eps,\delta}  \big) \big(\mu I_d +
X_t^{\eps,\delta}\big)^{-1}   \dd \mu ,
\end{align}
and similarly by differentiating w.r.t. to $\delta$. Then the expansion of $Y_t^{\eps,\delta}$ in
power series at $(\eps,\delta)=(0,0)$ is absolutely convergent on $B_{h}(0)$ and the
representation \eqref{eq:expansion_Yepsdel} holds on $Q_{\t,h}$
%for
%any $t\in [0,\tau[$ and $(\eps,\delta)\in B $,
%\begin{equation}\label{eq:power_series}
%Y_t^{\eps,\delta} = \sum_{n=0}^{\infty} \sum_{r=0}^{n} Y^{(r,n-r)}_t \eps^{r} \delta^{n-r}, \qquad t<\tau,\quad (\eps,\delta)\in B .
%\end{equation}
for some random coefficients $Y^{(r,n-r)}_t$. To conclude we need to show that the latter are as
given by \eqref{eq:represent_Y_general}-\eqref{eq:sigma_general}-\eqref{eq:mu_general}. Then
\eqref{eq:convergence} will stem from \eqref{eq:expansion_Yepsdel} by setting
$(\eps,\delta)=(1,1)$.
%\begin{equation}
%\sum_{n=0}^{\infty} \sum_{r=0}^{n} Y^{(r,n-r)}_t \eps^{r} \delta^{n-r}
%\end{equation}
%with $Y^{(r,n-r)}$

%Set the family of events
%\begin{equation}
%F_n := \big\{ \|Y^{\eps,\delta}_t \| <\pi \text{ for any } (t,\eps,\delta)\in [0,\tau[\times B_{{1}/{n}}(0)    \big\}  , \qquad n \in \N.
%\end{equation}
%By the continuity of $(t,\eps,\delta)\mapsto Y_t^{\eps,\delta}$ together with $Y^{0,0}\equiv 0$ $P$-a.s., we have
%\begin{equation}
%F_n \nearrow, \qquad P\big(\cup_{n\in\N} F_n \big)  = 1.
%\end{equation}
In light of %Lemma \ref{eq:lemm_diff}, Lemma \ref{lem:Baker} and
Lemma \ref{lem:log_matrix}, the logarithmic map is continuously twice differentiable on the open
subset of $ \mathcal{M}_{\mathbb{C}}^{d\times d}$ of the matrices $M$ such that $\snorm{ M - I_d } < 1
$: thus $Y_{t}^{\eps,\delta}$ admits an It\^o representation \eqref{eq:general_ito} for
$(t,\eps,\delta) \in Q_{\t,h}$. Then Proposition \ref{prop:Ito 1} together with
\eqref{eq:SDE_linear_eps} yield \eqref{eq:rep_A}-\eqref{eq:rep_B} $P$-a.s. up to $\tau$ {for any
$(\eps,\delta)\in B_h(0)\cap \R^2$}. Furthermore, by estimate \eqref{estimate_log} of Lemma
\ref{lem:log_matrix} we also have $\snorm{Y_{t}^{\eps,\delta}}<\pi$ for $t<\tau$. Therefore, we can
apply Baker's Lemma \ref{lem:Baker} to invert $\dD_{Y^{\eps,\delta}_t}$ in
\eqref{eq:rep_A}-\eqref{eq:rep_B} and obtain that
{$Y^{\eps,\delta}$ solves \eqref{eq:SDE_linear_bbb} up to $\tau$ for any $(\eps,\delta)\in B_h(0)\cap \R^2$}. %$Y_{t}^{\eps,\delta}$ solves \eqref{eq:SDE_linear_bbb} for $(t,\e,\d)\in Q_{\t,h}$. %up to time $\tau$, for any $(\eps,\delta) \in B_{h}(0)\cap \R^2$.
Part (ii) then follows from Lemma \ref{lem:verification}.

%Since the latter coincides with $\sum_{n=0}^{\infty} Y^{(n)}_t$ for $(\eps,\delta)=(1,1)$, this proves \eqref{eq:convergence} and concludes the proof.

\vspace{5pt}

\noindent\underline{Part (iii)}: for $t\le T$ let
  $$%\sup_{\substack{\eps,\delta\in B  \\ 0\leq s\leq t}}|X^{\eps,\delta}_{s} - I_d|,\qquad
  f_t(\eps,\delta)\coloneqq \max_{ s\in[0,t] }\fnorm{X^{\eps,\delta}_{s} - I_d},\qquad M_{t}\coloneqq\sup_{(\eps,\delta)\in
  B_{h}(0)}f_t(\eps,\delta).
  %\qquad 0<t\le T.
  $$
By definition \eqref{eq:def_tau}, we have%\red{(the first is an equality or inequality?)}
  \begin{equation}\label{eq:mark_property}
 P (\tau \leq t)\le P\left(M_{t}\geq 1-e^{-\pi}\right)\le \frac{1}{\left(1-e^{-\pi}\right)^2}{E\left[M^2_{t}\right]},
\end{equation}
and therefore \eqref{eq:estimate_tau_conv_b} follows by suitably estimating
$E\left[M^2_{t}\right]$. To prove such an estimate we will show in the last part of the proof that
$f_t$ belongs to the Sobolev space $W^{1,2 p}(B_{h}(0))$ for any $p\geq 1$ and we have
\begin{equation}\label{eq:sobolev}
 E\big[ \|  f_t   \|^{2 p}_{W^{1,2p}(B_{h}(0))}  \big]  \leq   %{\blue d^p}
 C  t^p,% \red{e^{C t}},
 \qquad t\in[0,T],
\end{equation}
where the positive constant $C$ depends only on $\|A^{(1)}\|_{T},\dots,\|A^{(q)}\|_{T}$,
$\|B\|_{T}$, $d$, $T$, $h$ and $p$. Since $f_t\in W^{1,2 p}(B_{h}(0))$ and
$B_{h}(0)\subseteq\mathbb{R}^{4}$, by Morrey's inequality (cf., for instance, Corollary 9.14 in
\cite{MR2759829}) for any $p>2$ we have
\begin{equation}\label{ae5}
  M_{t}\le c_{0} \|f_t\|_{W^{1,2p}(B_{h}(0))},
\end{equation}
where $c_{0}$ is a a positive constant, dependent only on $p$ and $h$ (in particular, $c_{0}$ is
independent of $\o$).
%Choosing $p>2$ %($\mathbb{C}^{2}$ is isomorphic to $\R^4$)
%Applying Morrey's inequality on $B_{h}(0)\subseteq\mathbb{R}^{4}$ with $p>2$ (cf., for instance,
%Corollary 9.14 in \citep{MR2759829}) and Jensen's inequality we obtain
Combining \eqref{eq:sobolev} with \eqref{ae5}, for a fixed $p>2$ we have
\begin{align}
 E\left[M^2_{t}\right] &\leq c^{2}_{0} E\big[ \|f_t\|^{2}_{W^{1,2p}(B_{h}(0))}  \big]\le
\intertext{(by H\"older inequality)}
 &\leq %{\blue d}\,
 c^{2}_{0}C t,\qquad t\in[0,T].
\end{align}
%for some positive constant $C$ that depends only on $h$ and $p$.
This last estimate, combined with \eqref{eq:mark_property}, proves \eqref{eq:estimate_tau_conv_b}. % stems from applying Jensen inequality and combin

To conclude, we are left with  the proof of \eqref{eq:sobolev}. First we have
\begin{equation}\label{eq:estimate_L2p}
E\bigg[ \int_{B_{h}(0)} | f_t(\eps,\delta) |^{2p}  \dd \eps\, \dd \delta \bigg] %&
 = \int_{B_{h}(0)} E\big[ | f_t(\eps,\delta) |^{2p} \big] \dd \eps\, \dd \delta
%\intertext{(by the estimate in Lemma \ref{lem:preliminary}-(i))}
%&
\leq %{\blue d^p}
 C%_p
  %e^{C t}}
  t^p,
\end{equation}
where we used the estimate \eqref{eq:estimates_Xepsdel} of Lemma \ref{lem:preliminary} in the last
inequality.
Fix now $t\in\,]0,T]$, $(\eps,\delta),(\eps',\delta')\in B_{h}(0)  %\cong \R^4
$ such that $f_t(\eps',\delta')\leq f_t(\eps,\delta)$ and set
\begin{equation}
  \bar{t} \in\underset{0\leq s\leq t}{\arg\max}  \fnorm{X^{\eps,\delta}_{s} - I_d},\qquad \tilde{t}
  \in\underset{0\leq s\leq t}{\arg\max}  \fnorm{X^{\eps',\delta'}_{s} - I_d} .
\end{equation}
Note that the $\arg\max$ above do exist in that the process $g_s(\eps,\delta)\coloneqq
X^{\eps,\delta}_{s} - I_d$
is continuous in $s$ %Denoting by $g_t(\eps,\delta):= X^{\eps,\delta}_{s}(\omega) - I_d$,
and we have
\begin{align*}
\abs{f_t(\eps,\delta)-f_t(\eps',\delta')}
&= \abs{ \fnorm{g_{\bar{t}}(\eps,\delta)} - \fnorm{g_{\tilde{t}}(\eps',\delta') } }%{|(\eps,\delta)-(\eps',\delta')|}
\leq{\abs{ \fnorm{g_{\bar{t}}(\eps,\delta)} - \fnorm{g_{\bar{t}}(\eps',\delta')} } }%{|(\eps,\delta)-(\eps',\delta')|}
 \\&\leq  \fnorm{g_{\bar{t}}(\eps,\delta) - g_{\bar{t}}(\eps',\delta')}
  \leq \sup_{0\leq s\leq t} \fnorm{g_{s}(\eps,\delta) - g_{s}(\eps',\delta')}
    \\&\leq \enorm{(\eps,\delta)-(\eps',\delta')} \sup_{0\leq s\leq t}
  \sup_{\substack{ |\bar{\eps} - \eps | \leq | \eps' - \eps|   \\  |\bar{\delta} - \delta | \leq | \delta' - \delta|   }}   \fnorm{\nabla    %_{\eps,\delta}
  g_{s}(\bar{\eps},\bar{\delta}) },
\end{align*}
where $\nabla=\nabla_{\!\e,\d}$.
%for some $\bar{\eps},\bar{\delta}\in\mathbb{C}^2$ such that $|\bar{\eps} -\eps|\leq |\eps' -\eps|$ and $|\bar{\delta} -\delta |\leq | \delta' -\delta |$.
This, as $(s,\eps,\delta)\mapsto \nabla    %_{\eps,\delta}
  g_{s}({\eps},{\delta})$ is continuous on $Q_{t,h}$, implies $f_t\in W^{1,2 p}(B_{h}(0))$ and yields the key inequality
\begin{equation}
 \enorm{ \nabla f_t(\eps,\delta) } \leq \sup_{ 0\leq s\leq t } %\big| \nabla ( X^{\eps,\delta}_{s}(\omega) - I_d ) \big|,
 \fnorm{ \nabla X^{\eps,\delta}_{s} },
 \qquad (\eps,\delta)\in B_{h}(0). %{\blue\qquad !!!}
\end{equation}
Therefore, we have
%\begin{equation}
%E\bigg[ \int_{B } | \partial_{u_{\eps}} f_t(\eps,\delta) |^{2p}  \dd \eps\, \dd \delta \bigg] = \int_{B } E\big[ | \partial_{u_{\eps}} f_t(\eps,\delta) |^{2p} \big] \dd \eps\, \dd \delta
%\leq   \int_{B } E\bigg[ \sup_{ 0\leq s\leq t }|\partial_{u_{\eps}} X^{\eps,\delta}_{s} |^{2p} \bigg]   \dd \eps\, \dd \delta \leq {{\blue d}\,C t\, e^{C t}},
%\end{equation}
\begin{align}
 E\bigg[ \int_{B_{h}(0)} \enorm{ \nabla f_t(\eps,\delta) }^{2p}  \dd \eps\, \dd \delta \bigg] &=
 \int_{B_{h}(0)} E\big[ \enorm{ \nabla f_t(\eps,\delta) }^{2p} \big] \dd \eps\, \dd \delta
 \\&\leq   \int_{B_{h}(0)} E\bigg[ \sup_{ 0\leq s\leq t }\fnormp{\nabla X^{\eps,\delta}_{s} } \bigg]   \dd \eps\, \dd \delta \leq %{\blue d^p}\,
 C t^p,
\end{align}
where we used the estimate \eqref{eq:derestimates_Xepsdel} of Lemma \ref{lem:preliminary} in the
last inequality.
%Similar arguments yield the same estimates for
%\begin{equation}
%E\bigg[ \int_{B } | \partial_{v_{\eps}} f_t(\eps,\delta) |^{2p}  \dd \eps\, \dd \delta \bigg], \ E\bigg[ \int_{B } | \partial_{u_{\delta}} f_t(\eps,\delta) |^{2p}  \dd \eps\, \dd \delta \bigg],\ \text{and }  E\bigg[ \int_{B } | \partial_{v_{\delta}} f_t(\eps,\delta) |^{2p}  \dd \eps\, \dd \delta \bigg].
%\end{equation}
%Such estimates
This, together with \eqref{eq:estimate_L2p}, proves \eqref{eq:sobolev} and conclude the proof.
\end{proof}

%\newpage

%The following statement holds.%, which is a recursive representation for the general term $Y^{(i,j)}$ in \eqref{eq:expansion_Yepsdel}.

\section{Numerical tests {and applications to SPDEs}}\label{sec:num_tests}
We present here some numerical tests in order to confirm the accuracy of the approximate solutions to \eqref{eq:SDE_linear_b}
stemming from the truncation of %such an
the series \eqref{eq:convergence}. We also show how this approximation can be applied to approximate the solutions to stochastic partial differential equations (SPDEs) of parabolic type.

We consider two examples {of SDEs (one in Section \ref{sec:ABconstant} and one in Section \ref{subsec:Auppertriang})}, for which we compute the first three terms of the ME given by \eqref{eq:stoch_magnus_exp}-\eqref{eq:represent_Y_general} and present numerical experiments to test the accuracy of the approximate solutions to \eqref{eq:SDE_linear_b} stemming from it. In both cases we consider $j=1$ in \eqref{eq:SDE_linear_b} and replace $A^{(1)}$ with $A$ to shorten notation. The first example will be for %deterministic and
constant matrices $ A$ and $B$. %in \eqref{eq:SDE_linear_b} with $j=1$. % and $d=2$.
%The second one will concern itself with the pure stochastic case, i.e.
In the second one we will consider $B\equiv 0$ and {a} deterministic upper diagonal $A_t$. % a deterministic upper diagonal matrix.
%one deterministic and time dependent two times two upper-triangular matrix $A_t$.
 For each numerical test we will implement the exponential of the truncated ME up to order $n=1,2$ and $3$, i.e.
 \begin{equation}\label{eq:magnus_approx}
 X^{(n)}\coloneqq e^{\sum_{i=1}^{n}Y^{(i)}}, \qquad n=1,2,3,
\end{equation}
and compare it with a benchmark solution to \eqref{eq:SDE_linear_b}. %In particular, we will plot the trajectories of one component of the solution, and provide a table with different type of errors.
In Section \ref{sec:par_spdes} we turn our attention to the application of the ME to the numerical resolution of SPDEs. In particular, in the numerical tests we will make use of the ME for constant matrices discussed in Section \ref{sec:ABconstant}.%
%present some numerical tests in the case of constant coefficients, and thus we will empl
%After the first example of an SDE with constant coefficients, we will show how one can apply this special case of the ME to solving SPDEs numerically.
%The section will be structured as follows: First of all, we fix some notations and define the errors we consider in the numerical experiments, then there will be the two examples. For each example we {\blue show the expansion formulas and} define the necessary parameters for the numerical methods. After that there will be a plot of one solution in the solution matrix for one trajectory over all times and a table with some errors for each method we use.
\paragraph*{Error and notations.$\ {}$}
Throughout this section we will employ the following tags:
\begin{itemize}
\item[-] \verb+euler+ for the solution obtained with Euler-Maruyama scheme,
which was implemented with Matlab's pagefun for the matrix multiplication on a single GPU and
vectorized over all samples;
\item[-] \verb+exact+ to denote the time-discretization of an explicit solution, if available;
\item[-] \verb+m1+,
\verb+m2+ and \verb+m3+ for the time-discretization of the Magnus approximations in \eqref{eq:magnus_approx}, up to order 1,2 and 3, respectively. %Namely, \verb+Mn+ refers to the time-discretization of $e^{Y^{(1)}_t + \cdots + Y^{(n)}_t}$.
\end{itemize}
%If available, we will use explicit solutions as a benchmark, otherwise the Euler-Maruyama scheme. The naming of the methods in the pictures and tables will be \verb+exact+ if an explicit solution is available, \verb+euler+ for the Euler-scheme, \verb+m1+,
%\verb+m2+ and \verb+m3+ for the Magnus scheme of order 1,2 and 3, resp.

For the {numerical} error analysis {in the SDE examples }we will make use of the following norms. Denoting by $X^{\text{ref}}$ and
by $X^{\text{\text{app}}}$ a benchmark and an approximate solution, respectively, to
\eqref{eq:SDE_linear_b} and by {$\left(t_k\right)_{k=0,\dots,N}$} a homogeneous discretization of $[0,t]$, %and by $(\omega_m)_{m\in M}$ a family of independent realizations of the discretized Brownian trajectories,
we consider the random variable% an approximate solution.
                \begin{align}  %\hspace{-15pt}
            \text{Err}_t \coloneqq     \frac{\Delta}{t}
                    \sum_{k=1}^N
                            \frac{\fnorm{ X^{\text{ref}}_{t_k}- X^{\text{\text{app}}}_{t_k}} }{\fnorm{ X^{\text{ref}}_{t_k}}}\approx
             \frac{1}{t}
                \int_{0}^{t}
                % \inf_{\omega\in\Omega :  | X^{\text{ref}}_s | > 0.1}
                {
                     \frac{
                        \fnorm{ X^{\text{ref}}_s  -  X^{\text{\text{app}}}_s   }}{\fnorm{ X^{\text{ref}}_s }}
                }  %\bigg| \big| X^{\text{ref}}_s \big| > 0.1
             \dd s  \qquad  \text{with $\Delta=\frac{t}{N}$} ,
                \end{align}
namely a discretization of the time-averaged relative error on the interval $[0,t]$. This is a way to measure the error on the whole trajectory as opposed to the error at a specific given time. Then we use Monte Carlo simulation, with $M$ independent realizations of the discretized Brownian trajectories, to approximate the distribution of $\text{Err}_t$. %expected value of :

The matrix norm above is the Frobenius norm. %, i.e. Euclidean entry-wise norm.
In the following tests, \verb+m1+, \verb+m2+ and \verb+m3+ will always play the role of
$X^{\text{\text{app}}}$, \verb+exact+ always the role of $X^{\text{ref}}$, whereas \verb+euler+
will be either $X^{\text{\text{app}}}$ or $X^{\text{ref}}$ depending on whether \verb+exact+ is
available or not.
%The stochastic ME will by its very nature be close to the exact solution at the start and will lose accuracy at terminal time. Therefore, a consideration of plain minimal and maximal errors would most likely only reflect in the errors at the start and end of the time line. Hence, we decided to choose errors for comparison, which take all times into account and will be averaged over all trajectories to avoid a favoritism of an exceptionally good or bad path in terms of differing from the exact solution.
%Conclusively, we choose the $L^1$ mean from above for the absolute errors and to be consistent we defined the relative errors in the same manner.

We used for the calculations {\verb+Matlab R2021a+} with Parallel Computing Toolbox running on
Windows 10 Pro, on a machine with the following specifications: processor Intel(R) Core(TM)
i7-8750H @ 2.20\,GHz,  2x32 GB (Dual Channel) Samsung SODIMM RAM @ 2667\,MHz, and a NVIDIA GeForce
RTX 2070 with Max-Q Design (8\,GB GDDR6 RAM).
%{\color{magenta} We used for the calculations \verb+Matlab R2019a+ with Parallel Computing Toolbox on a Gigabyte Aero-15-X9 Laptop with Windows 10 Pro with the following specifications: processor Intel(R) Core(TM) i7-8750H @ 2.20\,GHz,  2x16 GB (Dual Channel) Samsung SODIMM RAM @ 2667\,MHz, and a NVIDIA GeForce RTX 2070 with Max-Q Design (8\,GB GDDR6 RAM).
%}
Also, we will make use of the Matlab built-in routine \verb+expm+ for the computation of the
matrix exponential. As it turns out, this represents the most expensive step in the implementation
of the Magnus approximation. However important, the pursue of optimized method for the matrix
exponentiation is an extended topic of separate interest, which goes beyond the goals of this
paper. Therefore, here we will limit  {ourselves} to pointing out, separately, the computational
times for the approximations of the logarithm and of the matrix exponential.

In the implementation we simulate the Brownian motion first and use it as an input for each scheme
to be able to compare the trajectories of each scheme amongst each other.

%{\color{blue} [Andrea: Sure we want to say the following?]}\\ {\color{red} In most of the
%numerical examples we will be limited to using only 1000 samples due to limiting hardware
%components, i.e. 8\,GB RAM on the GPU. \sout{The method causing this limitation is
%\protect\UseVerb{euler} out of necessity of using a finer time grid compared to the Magnus methods
%to achieve the same accuracy. This can be explained by the theoretical convergence rate of the
%Euler-Maruyama scheme, which is of order one-half, while the convergence of the numerical
%deterministic integration, using standard Riemann-sums, is of order one, which is the method of
%our choice in the Magnus methods.}}
% : time for the discretized  %because we do not want to address in these experiments problems resulting from implementing a numerical method for computing the matrix exponential.

%{\blue The titles in plots are generated automatically by the \verb+Matlab+ code and look like
%$X(i,j,t,\omega)$, where $i,j$ is the position in the solution matrix $X_t$. For
%$t$ there will be $:$, which means, that the plot will be shown for all times and
%for $\omega$ there will be a number indicating which trajectory of the solution we are considering, e.g. $X(1,2,:,1)$ will be the first trajectory of the entry in the first row and second column over all times. (Any hope to remove $X(1,2,:,1)$ from the plots altogether?)}
\vspace{5pt}
\subsection{Example: constant $A$ and $B$.}\label{sec:ABconstant}

%%%%%%%%%%%%%%%%%%%%%%%%%%%%%%%%%%%%%%%%%%%%%%%%%%%%%%%%%%%%%%%%%%%%%%%%%%%%%%%%%%%%%%%%%%%%%%%
%\input{Numerics/AB_const/AB_const_commands.tex}
%\subparagraph*{Expansion formulas.}
 With a slight abuse of notation, we consider $A_t\equiv A$ and $B_t\equiv B$. Recall that, if $A$
and $B$ do not commute, there is in general no closed-form solution to \eqref{eq:SDE_linear_b}.
The first three terms of the ME read as
\begin{align}
    \Mlog_t^{(1)}&=B t + A W_t,\qquad
    \Mlog_t^{(2)}=\comm{A}{B}\left(\frac{1}{2}tW_t-\int_{0}^{t}{W_s ds}\right)
    -\frac{1}{2}A^2 t,\\
    \Mlog_t^{(3)}&=\comm{\comm{B}{A}}{A}\bigg(
            \frac{1}{2} \int_{0}^{t}{W_s^2 ds}
            -\frac{1}{2} W_t \int_{0}^{t}{W_s ds}
            +\frac{1}{12} tW_t^2 \bigg) \\
            &\quad
            +\comm{\comm{B}{A}}{B}\bigg(
             \int_{0}^{t}{sW_s ds}
             -\frac{1}{2}  t\int_{0}^{t}{W_s ds}
            -\frac{1}{12} t^2 W_t\bigg). \label{eq:ab_const_y3}
%\\ \Mlog_t^3&=
%           -\frac{1}{2}\comm{\comm{B}{A}}{A} W_t \int_{0}^{t}{W_s ds}
%           +\frac{1}{2}\comm{\comm{B}{A}}{A} \int_{0}^{t}{W_s^2 ds}
%           +\frac{1}{12}\comm{\comm{B}{A}}{A} tW_t^2 \\
%           &\quad
%           -\frac{1}{2} \comm{\comm{B}{A}}{B} t\int_{0}^{t}{W_s ds}
%           +\comm{\comm{B}{A}}{B} \int_{0}^{t}{sW_s ds}
%           -\frac{1}{12}\comm{\comm{B}{A}}{B} t^2 W_t.
\end{align}
%%%%%%%%%%%%%%%%%%%%%%%%%%%%%%%%%%%%%%%%%%%%%%%%%%%%%%%%%%%%%%%%%%%%%%%%%%%%%%%%%%%%%%%%%%%%%%%
%\subparagraph*{Numerical test.}
We point out that, in this case, all the stochastic integrals appearing in the ME can be solved in terms of Lebesgue integrals by using It\^o's formula. Therefore, in order to discretize $Y^{(n)}$ it is not necessary to approximate stochastic integrals. This allows to use a sparser time grid compared to the Euler method, for which the discretization of stochastic integrals is necessary. {In particular, the theoretical speed of convergence with respect to the time-step is of order $\sqrt{\Delta}$ for Euler-Maruyama scheme and of order $\Delta$ for deterministic Euler, which is the scheme used to discretize the Lebesgue integrals in the Magnus expansion above.} {In the following numerical tests, we discretize in time with mesh $\Delta$ equal to $10^{-4}$ for \verb+euler+ and equal to
$\sqrt{\Delta}=10^{-2}$ for \verb+m1+, \verb+m2+ and \verb+m3+. Note that, as it is confirmed by the results in Table \ref{tab:AB_const}, choosing a finer time-discretization for \verb+euler+ (our reference method here) is essential in order to make it comparable with \verb+m3+.} {Furthermore, in the example of Section \ref{subsec:Auppertriang}, where an explicit solution is available, we show (see Tables \ref{tab:B0_fix} and \ref{tab:B0_var}) that choosing a sparser time-grid (say $\Delta=10^{-3}$) the Euler-Maruyama method incurs a sensitive loss of precision.}

It is also clear that the implementation is totally parallelizable, in that $Y^{(1)}$,
$Y^{(2)}$ and $Y^{(3)}$ do not depend on each other and thus they can be computed in parallel. More importantly, the discretization of the integrals in each $Y^{(n)}$ can be parallelized {as the latter are explicit and not implicitly defined through a differential equation}.

%{In particular, in this formula one can see clearly, that for a numerical implementation it is totally parallelizable: Each term $Y^{(1)}$, $Y^{(2)}$ and $Y^{(3)}$ does not depend on each other and can be computed in parallel, as well as in each term all times and simulations can be computed in parallel.}
 %, because we want our reference method for the error analysis to be more accurate, than the Magnus methods. We will see in Table \ref{tab:AB_const} that for small times the error of the third order stochastic ME to the \verb+euler+ method is approximately 0.13\,\%, which confirms the need to use a finer time-discretization for the \verb+euler+ method.}
%Also, w
We choose $A$ and $B$ at random and normalize them by their spectral norms. In particular,
the results below refer to
\begin{equation}
A= \left( \begin{matrix} 0.335302 & -0.645492 \\ -0.264419 & 0.634641
\end{matrix}\right), \qquad
B= \left( \begin{matrix} -0.0572262 & 0.0493763 \\ -0.665366 & 0.742744
\end{matrix}\right).
\end{equation}
%\begin{align*}
%   &A=
%   \ABconstA
%   &&
%   &B=
%   \ABconstB.
%\end{align*}
%Also, we discretize in time with mesh $\Delta$ equal to $10^{-4}$ for \verb+euler+ and equal to $10^{-3}$ for \verb+m1+, \verb+m2+ and \verb+m3+. {\blue Note we use a smaller $\Delta$ for \verb+euler+ compared to the MEs. This is because the Euler method requires a smaller time-step in order to reach a good accuracy level, due the discretization of the stochastic integrals.}
In Figure \ref{fig:AB_const} we plot one realization of the trajectories of the top-left component
$(X_t)_{11}$, computed with the methods above, up to time $t=0.75$.
\begin{figure}
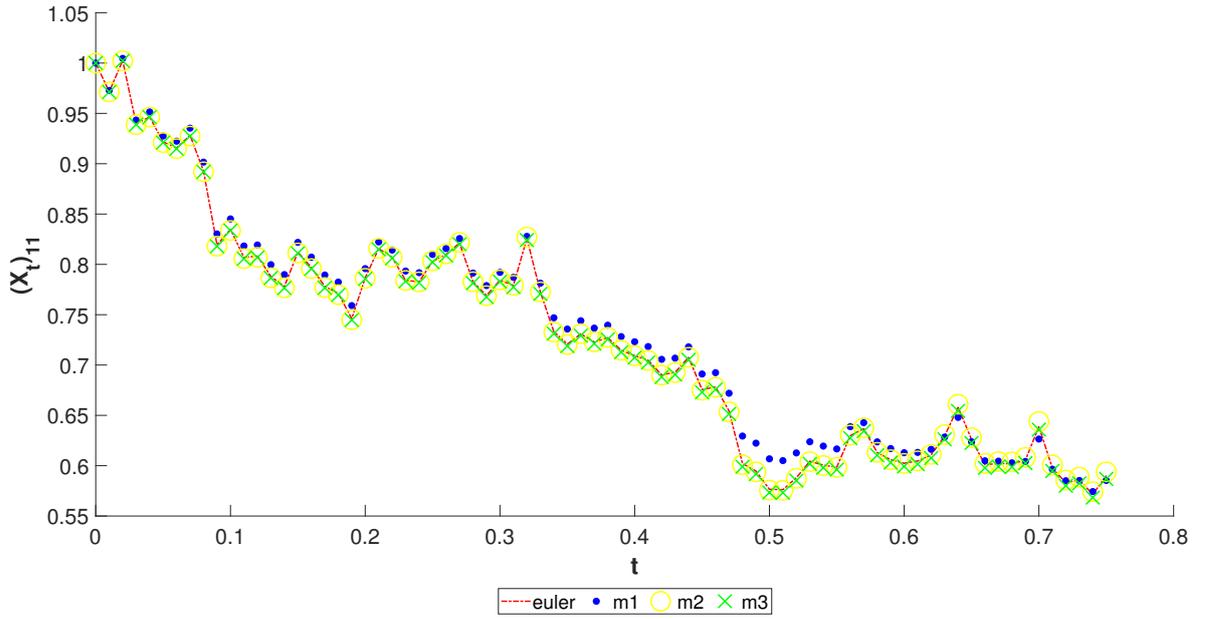
%
\ABconstTrajectory%
\caption{$A$ and $B$ constant. One realization of the trajectories of the top-left component $(X_t)_{11}$, computed with \protect\UseVerb{euler}, \protect\UseVerb{m1}, \protect\UseVerb{m2}, \protect\UseVerb{m3}.}%
\label{fig:AB_const}%
\end{figure}
In Table \ref{tab:AB_const} we show the expectations $E[\text{Err}_t]$ for different values of
$t$, with \verb+euler+ as benchmark solution, computed via Monte Carlo simulation with $10^{3}$
samples. The same samples are used in Figure \ref{fig:AB_constcds} %\ref{fig:cdf1}
to plot the empirical CDF of $\text{Err}_t$.
%corresponding to the samples used to compute the expectations.
\begin{table}
\centering
    \caption{$A$ and $B$ constant. Values of $E[\text{Err}_t]$ (in percentage) for \protect\UseVerb{m1}, \protect\UseVerb{m2}, \protect\UseVerb{m3}, with \protect\UseVerb{euler} as benchmark solution, obtained with $10^{3}$ samples.
}
\begin{tabular}{ccccccc}
Method               & $t=0.25$    & $t=0.5$     &  $t=0.75$   &  $t=1$          & $t=2$
&  $t=3$\\
\hline
\multicolumn{7}{c}{\bf Euler $\Delta=10^{-4}$, Magnus $\Delta=10^{-2}$}\\
\protect\UseVerb{m1} & $4.59\,\%$  & $9.6\,\%$  & $14.8\,\%$  & $21.1\,\%$  & $49.5\,\%$
& $86.1\,\%$   \\
\protect\UseVerb{m2} & $0.217\,\%$ & $0.503\,\%$ & $0.951\,\%$ & $1.55\,\%$   & $5.29\,\%$
& $10.9\,\%$  \\
\protect\UseVerb{m3} & $0.176\,\%$ & $0.256\,\%$ & $0.371\,\%$ & $0.543\,\%$ & $2.03\,\%$
& $5.25\,\%$ %\\
%\hline
%\multicolumn{7}{c}{\bf Euler $\Delta=10^{-3}$, Magnus $\Delta=10^{-2}$}\\
%\protect\UseVerb{m1} & $4.55\,\%$  & $9.48\,\%$  & $15.1\,\%$  & $21\,\%$  & $48.5\,\%$
%& $87.9\,\%$   \\
%\protect\UseVerb{m2} & $0.454\,\%$ & $0.759\,\%$ & $1.28\,\%$ & $1.87\,\%$   & $5.36\,\%$
%& $10.7\,\%$  \\
%\protect\UseVerb{m3} & $0.43\,\%$ & $0.588\,\%$ & $0.795\,\%$ & $0.993\,\%$ & $2.44\,\%$
%& $5.45\,\%$
\end{tabular}
\label{tab:AB_const}
\end{table}
%\UseVerb{term}
%In the case $t\in [0,\ABconstT]$
%with $\ABconstNfine$ time-steps and samples for the Euler-method as a reference and
%$\ABconstN$
%time-steps and samples for all Magnus-orders we have the Figure \ref{fig:AB_const}\\
%The corresponding errors for the 4 different solutions in the solution matrix are in \ref{tab:AB_const}.
%\begin{table}%
%   \ABconsttotalerrtab
%   \centering
%\caption{$A$ and $B$ constant: values of $\Eb[\text{Err}_t]$ (in percentage) for \protect\UseVerb{m1}, \protect\UseVerb{m2}, \protect\UseVerb{m3}, using \protect\UseVerb{euler} as benchmark solution.
%}
%\label{tab:AB_const}
%\end{table}
\begin{figure}
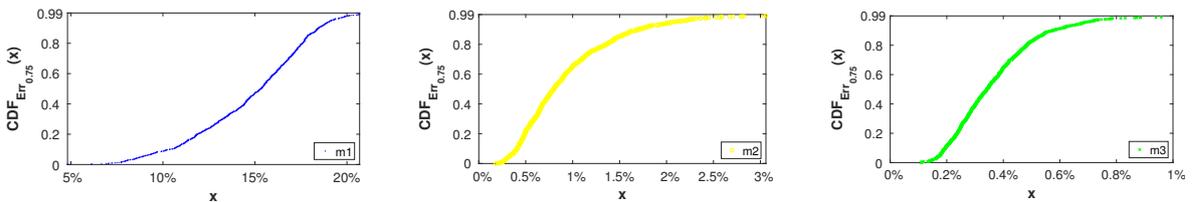
\label{fig:cdf1}%
    \begin{center}
    \begin{minipage}[c][][c]{.30\linewidth}
        \ABconstCDSmone%
    \end{minipage}\hfill
    \begin{minipage}[c][][c]{.30\linewidth}
        \ABconstCDSmtwo%
    \end{minipage}\hfill
    \begin{minipage}[c][][c]{.30\linewidth}
        \ABconstCDSmthree%
    \end{minipage}%
    \end{center}
\caption{$A$ and $B$ constant. Empirical CDF of $\text{Err}_t$, at $t=0.75$, for
\protect\UseVerb{m1}, \protect\UseVerb{m2}, \protect\UseVerb{m3}, with \protect\UseVerb{euler} as
benchmark solution, obtained with $10^{3}$ samples.}
\label{fig:AB_constcds}%
\end{figure}
{The computational times for the $10^3$ sampled trajectories of $X$, up to time $t=1$, computed with \protect\UseVerb{m1}, \protect\UseVerb{m2}, \protect\UseVerb{m3} and \protect\UseVerb{euler} are reported in Table \ref{tab:ctimes_AB_const_T1_d2_N1000_M1000}. For the Magnus methods we separate the time to compute the approximate logarithm from the one to compute the matrix exponential.}
%{\color{red}\sout{
%which is given in
%Table \ref{tab:ctimes_AB_const_T1_d2_N1000_M1000} in more detail, is roughly
%$7$
%seconds
%for \verb+euler+ and $0.6$ seconds for either \verb+m1+, \verb+m2+ or \verb+m3+. The latter,
%however, is divided as follows: nearly $0.03$ seconds to compute the %\blu{\sout{approximate logarithm}
%ME %}
%and nearly $0.56$ seconds to compute the matrix exponential with the Matlab function \verb+expm+.
%}}
\begin{table}[!ht]
\centering
    \caption{$A$ and $B$ constant. Computational times for $10^3$ sampled trajectories of $X$, up to time
    $t=1$, computed with \protect\UseVerb{m1}, \protect\UseVerb{m2}, \protect\UseVerb{m3} and \protect\UseVerb{euler}.
}
\begin{tabular}{*{4}{c}}
\text{Method} &\text{Log} &\text{Matrix Exp} &\text{Total}\\
\hline
\protect\UseVerb{euler} $\Delta=10^{-4}$ & 0 & 0 & 6.7784 \\
\protect\UseVerb{m1} $\Delta=10^{-2}$ & 0.0093466 & 0.535331 & 0.544678 \\
\protect\UseVerb{m2} $\Delta=10^{-2}$ & 0.0221759 & 0.569511 & 0.591687 \\
\protect\UseVerb{m3} $\Delta=10^{-2}$ & 0.0475184 & 0.584098 & 0.631616 \\
\end{tabular}
\label{tab:ctimes_AB_const_T1_d2_N1000_M1000}
\end{table}

\begin{remark}\label{rem:compTimes}%
    {%Let us point out the implications of the computational times in
%        Table \ref{tab:ctimes_AB_const_T1_d2_N1000_M1000} and Table \ref{tab:ctimes_B0_T1_d2_N1000_M1000}.
                {We can see from Table \ref{tab:ctimes_AB_const_T1_d2_N1000_M1000} that the Magnus methods \protect\UseVerb{m1}, \protect\UseVerb{m2} and
        \protect\UseVerb{m3} are significanty faster than \protect\UseVerb{euler}. The reason for this is two-fold: on the one hand, we have the possibility of parallelizing  the Magnus methods over both time and
        samples, while \protect\UseVerb{euler} is only parallelizable over all samples,
                and on the other hand, {we can discretize the Magnus expansion with a time-step that is the square root of the one used for Euler-Maruyama, due to the different rates of convergence.}
%                we have the advantage of using only a square-root of the necessary time step size compared to \protect\UseVerb{euler} due to the different convergence rate of the chosen deterministic integration scheme (order 1) and the
%                theoretical convergence rate of the Euler-Maruyama scheme (order 0.5).
                }

        In our numerical experiments we already use 6 CPU cores to parallelize the computation of the matrix exponential on the CPU, while we use one GPU to compute the Magnus logarithm. For \protect\UseVerb{euler} we speed up in each iteration the matrix multiplications by using \verb+pagefun+ on a GPU to parallelize over all samples.
        As for \protect\UseVerb{m1}, \protect\UseVerb{m2} and
        \protect\UseVerb{m3}, if we were to increase the number of CPU cores to, say, 12, we could see an approximate reduction in the
        computation time of matrix exponentiation by half (plus overhead), making it about {$24$ times as
        fast as the Euler method.}

                {        Now, the very nature of \protect\UseVerb{euler} (see Example \ref{exm:moments} together with Table \ref{tab:moments}) as an iterative scheme yields another advantage of the Magnus methods; %over it, n
        namely, that the computation of the logarithm is very fast and if one needs only the solution of the SDE at the terminal time then one has to compute the matrix exponential only at a single time. %point over all samples.
        Let us consider Table \ref{tab:ctimes_AB_const_T1_d2_N1000_M1000} for the moment. In this particular experiment it would mean that we can divide the computational time of the matrix exponentiation by approximately $\Delta^{-1}=10^2$ without increasing the CPU core count. Hence, the Magnus methods would
        require approximately only $0.04$ seconds plus effects from distributing the memory to the different processors. The \protect\UseVerb{euler} method, in contrast, does not benefit
        from this because, as an iterative method, it must fully evaluate the trajectories.}

        Such situations are not uncommon; for example, in mathematical finance pricing a European call option depends only on the terminal time of the underlying process, giving the Magnus methods a tremendous advantage even without increasing CPUs or GPUs. We will illustrate such a situation in Example \ref{exm:moments} together with Table \ref{tab:moments}.
        In calibration procedures, such as fitting a model to data at few points in time, the Magnus method also excels for the same reason.
    }
\end{remark}
\begin{table}%
    \caption{$A$ and $B$ constant. Computational times and values of first, second and third moment at the terminal time $t=1$ for \protect\UseVerb{m1}, \protect\UseVerb{m2}, \protect\UseVerb{m3}, using $\Delta = 10^{-2}$, and \protect\UseVerb{euler}, using $\Delta=10^{-4}$, obtained with $10^3$ samples.}
    \begin{tabularx}{\linewidth}{c*{4}{X}c}
        \textbf{Method} & $E\left[\left(\left(X_t\right)_{11}\right)^k\right]$
                                                & $E\left[\left(\left(X_t\right)_{12}\right)^k\right]$
                                                & $E\left[\left(\left(X_t\right)_{21}\right)^k\right]$
                                                & $E\left[\left(\left(X_t\right)_{22}\right)^k\right]$
                                                &\text{\textbf{Total time}}\\
        \hline
        \multicolumn{6}{c}{\bf First moment, $k=1$}\\
                \protect\UseVerb{euler} & 0.884995 & 0.136974 & -0.913738 & 1.99784 & 6.83223 \\
                \protect\UseVerb{m1} & 1.23538 & -0.510346 & -1.38672 & 2.88552 & 0.11308 \\
                \protect\UseVerb{m2}  & 0.92461 & 0.0488442 & -0.889341 & 2.00131  & 0.16783 \\
                \protect\UseVerb{m3} & 0.886685 & 0.132748 & -0.910886 & 1.9915 & 0.185966 \\
        \hline
        \multicolumn{6}{c}{\bf Second moment, $k=2$}\\
                \protect\UseVerb{euler} & 1.20982 & 1.09315 & 1.78757 & 7.06842 & 6.83263 \\
                \protect\UseVerb{m1} & 2.49141 & 3.21348 & 3.99804 & 15.4156 & 0.113674 \\
                \protect\UseVerb{m2}  & 1.31038 & 1.18291 & 1.7166 & 7.07746 & 0.168869 \\
                \protect\UseVerb{m3}  & 1.21421 & 1.09186 & 1.77727 & 7.00593 & 0.186955 \\
        \hline
        \multicolumn{6}{c}{\bf Third moment, $k=3$}\\
                \protect\UseVerb{euler}  & 2.62519 & -3.20706 & -5.74199 & 40.8729 & 6.83229 \\
                \protect\UseVerb{m1}  & 8.21939 & -20.9025 & -18.9891 & 136.058 & 0.1131 \\
                \protect\UseVerb{m2}  & 2.95392 & -4.05804 & -5.44452 & 40.8636 & 0.168065 \\
                \protect\UseVerb{m3}  & 2.6546 & -3.27689 & -5.70576 & 40.2687 & 0.185915
    \end{tabularx}
    \centering
    \label{tab:moments}
\end{table}
\begin{example}\label{exm:moments}%
    %\color{red}
    In this example we want to demonstrate the benefit, explained in Remark \ref{rem:compTimes}, of using the Magnus methods compared to iterative schemes, such as the Euler method, when calculating the first, second and third element-wise moments of the terminal value of a matrix-valued SDE.
    %To be more precise
    Precisely, we evaluate %$E\left[F(X_t)\right]$, where $F:\R^{d\times d}\rightarrow \R^{d\times d}, F(X)\coloneqq f(X_{ij})$, $i,j=1,\dots,d$, and $f:\R\rightarrow\R,\ f(x)\coloneqq x^k$,
   $E\big[((X_t)_{ij})^k\big]$ for $i,j=1,\dots,d$ and $k=1,2,3$.
%     {\color{red} 
%     \sout{Compared to the parameters we used in Table \ref{tab:AB_const}, we will increase the sample size to $10^4$ for all methods, but keep the same step sizes.}
%     }
        {We will keep the same parameters as in Table \ref{tab:AB_const}.}
%{\color{blue} In order to make the comparison of the computational times fairer, regardless of
%accuracy, with respect to the Euler method, we keep $T=1$ but alter, compared to the parameters
%used in Table \ref{tab:AB_const}, $\Delta$ to $10^{-3}$ and the samples to $10^4$ for \verb+m1+,
%\verb+m2+,    \verb+m3+ and \verb+euler+.}

The results of this example are summarized in Table \ref{tab:moments}. In this table columns 2--4
contain the values of the element-wise moments at the terminal time of the solution to the SDE
with constant coefficients starting with the upper left corner of the solution matrix, then the
upper right, lower left and lower right, respectively. In the last column we present the
computational times in seconds.

    %{\color{blue} \sout{We can see that the theoretically conjectured times from Remark \ref{rem:compTimes} for evaluating matrix exponential differs from the real times displayed in Table \ref{tab:moments}, because of various reasons, e.g. distributing memory to the different processors, idle time of a processor that has already finished, etc.}}
        {
        %\color{blue} %We can see that when each sub-table for $k=1,2,3$ is considered separately,
        The values of the moments do not differ significantly between \protect\UseVerb{euler}  and \protect\UseVerb{m3}, %However, due to the increased step-size, it could well be that the values produced by \verb+m3+ are more accurate.
    and remarkably %, compared to the Euler method,
    the Magnus methods %yield a roughly 2-times speed-up
    are roughly 35 times as fast in this particular example. We stress again at this point that a coarser time-grid for \protect\UseVerb{euler} would not be comparable to the accuracy of \protect\UseVerb{m3}.

%        \sout{Decreasing the step-size for both methods, on the other hand, will favor the Magnus methods, for
%example, if we change $\Delta$ to $10^{-4}$ %in another experiment with $10^4$ samples,
%then
%\protect\UseVerb{m3} will take about $2.49$ seconds and \protect\UseVerb{euler} $6.11$ seconds, yielding a 2.5-fold
%acceleration.}
}
\end{example}
%{\blue In the case $T=1$ with $10^3$ time-steps and samples for Magnus and
%$10000$ time-steps and $10^3$ samples for Euler it took roughly $84$ seconds to compute the Euler-method, $2$ seconds for the Magnus-logarithm and $46$ seconds for the Matrix exponential, i.e. in total for each Magnus order separately $48$ seconds to compute the whole time evolution for all samples.
%The Magnus-methods are faster than the Euler-method in this case despite the fact, that we still use \verb+Matlab+'s \verb+expm+. (is this the computation time for the whole trajectory up to a certain time?)}
{In the interesting paper \cite{Lord2018} a non-linear extension in the case of commuting $A$ and $B$ can be found and applications to SPDEs via space discretizations are discussed, which is the same approach we take in the next subsection with the ME.}

\subsection{Applications to %\blu{parabolic (remove?)}
SPDEs}\label{sec:par_spdes} The aim of this subsection is to apply {the previously derived ME} for {the
numerical solution} of parabolic stochastic partial differential equations (SPDEs). {We derive an approximation scheme for the general case of variable coefficients, which we only test in the case of the stochastic heat-equation (Example \ref{sec:spde_numerics}), for which an exact solution is available.}

\subsubsection{Stochastic Cauchy problem and fundamental solution}\label{subsec:SPDEs}
Let {$(\Omega,\F,P, (\F_t)_{t\geq 0})$} be a filtered probability space endowed with a real
Brownian motion $W$. We consider the %parabolic SPDE of the type
stochastic Cauchy problem
\begin{equation}\label{eq:SPDE}
\begin{cases}
 \dd u_t(x) = \Lbf_t u_t (x)   \dd t + \Gbf_t u_t(x) \dd W_t, \qquad t> 0,\ x\in \R,\\
 u_0 = \varphi,
\end{cases}
\end{equation}
where $\Lbf_t $ is the %second-order
elliptic linear operator acting as
\begin{equation}
\Lbf_t u_t(x) = \frac{1}{2} \abf_t(x) \partial_{xx}u_t(x) + \bbf_t(x) \partial_{x}u_t(x) + \cbf_t(x)u_t(x) ,
\end{equation}
and $\Gbf_t$ is the %second-order
first-order linear operator acting as
\begin{equation}
\Gbf_t u_t(x) = \sbf_t (x) \partial_x u_t(x) +\gbf_t(x) u_t(x)  .
\end{equation}
The coefficients $(\abf, \bbf, \cbf, \gbf, \sbf)$ are random fields indexed by $(t,x)\in
[0,\infty[\times \R$ and the initial datum $\varphi$ is a random field on $\R$. {A classical
solution to \eqref{eq:SPDE} is understood here as a predictable and almost-surely continuous random field
$u=u_t(x)$ over $[0,\infty[\times \R$,} such that $u_t \in C^{2}(\R)$ a.s. for any $t>0$ and
\begin{equation}
  u_t (x) = \phi(x) + \int_{0}^t   \Lbf_{\tau} u_{\tau} (x)  \dd \tau +
  \int_0^t  \Gbf_{\tau} u_{\tau}(x)   \dd W_{\tau}, \qquad t\geq 0,\ x\in\R. %\ x\in\R.
\end{equation}
%for any $x\in\R$.
%Under suitable measurability, regularity and boundedness assumptions for such random fields, Eq. \eqref{eq:SPDE} admits a unique (up to indistinguishability) solution.  %satisfying suitable regularity and measurability assumptions that will be specified later on.
%Solutions to \eqref{eq:SPDE} are understood here as suitably regular random fields over $[0,\infty[\times \R$ such that
%\begin{equation}
%u_t (x) = u_0 + \int_{0}^t   \Lbf_{\tau} u_{\tau} (x)  \dd \tau + \int_0^t  \Gbf_{\tau} u_{\tau}(x)   \dd W_{\tau}, \qquad t\geq 0, \ x\in\R.
%\end{equation}
There is a vast literature on stochastic SPDEs and problems of the form \eqref{eq:SPDE}, under
suitable measurability, regularity and boundedness assumptions on the coefficients and on the
initial datum: see, for instance, \cite{MR0501350}, \cite{Mikulevicius}, \cite{Chowbook},
\cite{PascucciPesce} and the references therein.

Note that, in analogy with deterministic PDEs, the solution of the Cauchy problem \eqref{eq:SPDE}
can be written, in some cases, as a %space-
convolution of the initial datum with a \emph{stochastic
fundamental solution} $p(t,x;0,\xi)$, i.e.
\begin{equation}
 u_t(x) = \int_{\R} p(t,x;0,\xi) \phi(\xi)  \dd \xi, \qquad (t,x)\in\,]0,\infty[\times \R,%\qquad P\text{-a.s.},
\end{equation}
with $p(t,x;0,\xi)$ being a random field that solves the SPDE in \eqref{eq:SPDE} with respect to
the variables $(t,x)$ and which approximates a Dirac delta centered at $\xi$ as $t$ approaches
$0$. %We refer to \citep{} for a precise definition, and for existence-uniqueness results for such
%fundamental solution.

\subsubsection{Finite-difference Magnus scheme}\label{sec:finitediff_magnus}
We employ the stochastic ME to develop an approximation scheme for the Cauchy problem %the SPDE
\eqref{eq:SPDE}. Our goal here is only to hint at the possibility that the stochastic ME is a
useful tool for the numerical solution of SPDEs. Therefore, we keep the exposition at a heuristic
level and postpone the rigorous study of the problem for further research.

The idea is to apply finite-difference space-discretization for the operators $\Lbf$ and $\Gbf$,
and then ME to solve the resulting linear (matrix-valued) It\^o SDE. We fix a bounded interval
$[a,b]$ and use the following notation: for a given { $d\in\N$}, we denote by {$\varsigma_d$} a
mesh of {$d+2$} equidistant points in $[a,b]$, i.e.
\begin{equation}
{ \varsigma_d = \{  x^d_i \mid x^d_i = a + i h ,\ i=0,\dots,  d+1 \}, \qquad
h\coloneqq\frac{b-a}{d+1} },
\end{equation}
and for any random field ${\bf f}(x)$, $x\in \R$, we denote by ${\bf f}^{d}= ({\bf f}^{{d}}_{0},
\dots, {\bf f}^{{d}}_{{d+1}})$ the random vector whose components correspond to ${\bf f}$
evaluated at the points of the mesh, namely
\begin{equation}
{\bf f}^{{d}}_{i} = {\bf f}(x^{{d}}_i), \qquad i=0,\dots, {d+1}.
\end{equation}
%
%
%and the random vector $u^m_t =  (u^m_{t,0}, \cdots, u^m_{t,m})_{t> 0}$ whose components correspond to the solution $u_t$ evaluated at the points of the mesh, namely
%\begin{equation}
%u^m_{t,i} = u_t (x^m_i), \qquad i=0,\cdots, m.
%\end{equation}
Following the classical centered finite-difference discretization, we approximate the spatial derivatives in each point as
\begin{equation}
 \partial_x u_t (x^{{d}}_i) \approx \frac{u^{{d}}_{t,i} - u^{{d}}_{t,i-1}}{h} ,\quad \partial_{xx} u_t (x^{{d}}_i) \approx \frac{u^{{d}}_{t,i+1} - 2 u^{{d}}_{t,i} + u^{{d}}_{t,i-1}}{h^2}, \qquad i=1,\dots, {d} ,
\end{equation}
to obtain the %finite-difference
system of It\^o SDEs
\begin{equation}\label{eq:SPDE_discrete}
\begin{cases}
 \dd u^{{d}}_{t,i} = (\Lbf^{{d}}_t u^{{d}}_t)_i   \dd t + (\Gbf^{{d}}_t u^{{d}}_t)_i \dd W_t,&%\qquad t> 0,
 \\
 u^{{d}}_{0,i} = \phi^{{d}}_i,&
\end{cases}
\end{equation}
for $i=1,\cdots, {d}$, where $\Lbf^{{d}}_t$ and $\Gbf^{{d}}_t$ are now the %linear
operators %from $\R^m$ onto itself
acting as
\begin{align}
(\Lbf^{{d}}_t  u^{{d}}_t)_i  &=  \frac{1}{2} \abf^{{d}}_{t,i} \frac{u^{{d}}_{i+1} - 2 u^{{d}}_{i}
+ u^{{d}}_{i-1}}{h^2} + \bbf^{{d}}_{t,i} \frac{u^{{d}}_i - u^{{d}}_{i-1}}{h} + \cbf^{{d}}_{t,i}
u^{{d}}_{i},  \\
 ( \Gbf^{{d}}_t u^{{d}}_t)_i   & =   \sbf^{{d}}_{t,i}  \frac{u^{{d}}_i - u^{{d}}_{i-1}}{h} +\gbf^{{d}}_{t,i} u^{{d}}_{i}  .
\end{align}
By imposing {some boundary conditions, for instance}
\begin{equation}\label{ae10}
 u^{{d}}_{t,0} = u^{{d}}_{t,{{d+1}}} = 0, \qquad t>0,
\end{equation}
the {system of SDEs} \eqref{eq:SPDE_discrete} can be cast in the framework of the previous
section. More precisely, under condition \eqref{ae10}, system \eqref{eq:SPDE_discrete} is
equivalent to
\begin{equation}\label{eq:linear_SDE_SPDE}
\begin{cases}
 \dd \bar{u}^{{d}}_{t} =   A^{{d}}_t \bar{u}^{{d}}_t  \dd t + B^{{d}}_t \bar{u}^{{d}}_t \dd W_t,&\\
 \bar{u}^{{d}}_{0} =  \bar{\phi}^{{d}},&
\end{cases}
\end{equation}
%in the following way. Set the $(m-1)$-dimensional random vector
where we set
\begin{equation}
 \bar{u}^{{d}}_t = (u^{{d}}_{t,1},\dots, u^{{d}}_{t,{{d}}}),\qquad \bar{\phi}^{{d}} = (\phi^{{d}}_{1}, \dots, \phi
^{{d}}_{{{d}}}), %\qquad t\geq 0,
\end{equation}
and $A^{{d}}_t,B^{{d}}_t$ are the random tridiagonal $({{d}}\times {{d}})$-matrices given by
%\vspace{5pt}
\begin{equation}
A^{{d}}_t = \left( \begin{matrix}
-\frac{\abf^{{d}}_{t,1}}{h^2} + \frac{ \bbf^{{d}}_{t,1}}{h} +\cbf^{{d}}_{t,1} & \frac{1}{2} \frac{\abf^{{d}}_{t,1}}{h^2}   & %0 &
 \cdots & 0  \vspace{20pt}  \\
\frac{1}{2} \frac{\abf^{{d}}_{t,2}}{h^2} -\frac{\bbf^{{d}}_{t,2}}{h} &
-\frac{\abf^{{d}}_{t,2}}{h^2} + \frac{ \bbf^{{d}}_{t,2}}{h} +\cbf^{{d}}_{t,2} &
%\frac{1}{2} \frac{\abf^m_{t,2}}{h^2}  &
\ddots & \vdots  \vspace{20pt}  \\
%0& \frac{1}{2} \frac{\abf^m_{t,3}}{h^2} -\frac{\bbf^m_{t,3}}{h} & -\frac{\abf^m_{t,3}}{h^2} + \frac{ \bbf^m_{t,3}}{h} +\cbf^m_{t,3} & \ddots  & 0  \vspace{15pt}  \\
\vdots& \ddots&
%\ddots &
 \ddots & \frac{1}{2} \frac{\abf^{{d}}_{t,{{d -1}}}}{h^2}  \vspace{20pt}   \\
0& \cdots&
%0 &
\frac{1}{2} \frac{\abf^{{d}}_{t,{{d}}}}{h^2} -\frac{\bbf^{{d}}_{t,{{d}}}}{h} &
-\frac{\abf^{{d}}_{t,{{d}}}}{h^2} + \frac{ \bbf^{{d}}_{t,{{d}}}}{h} +\cbf^{{d}}_{t,{{d}}}
\\
\end{matrix}
\right),
\end{equation}
\vspace{5pt}
\begin{equation}
B^{{d}}_t =\left( \begin{matrix}
 \frac{ \sbf^{{d}}_{t,1}}{h} +\gbf^{{d}}_{t,1} & 0   & % 0 &
  \cdots & 0  \vspace{15pt}  \\
 -\frac{\sbf^{{d}}_{t,2}}{h} &  + \frac{ \sbf^{{d}}_{t,2}}{h} +\gbf^{{d}}_{t,2} & % 0  &
  \ddots & \vdots  \vspace{15pt}  \\
%0& -\frac{\sbf^m_{t,3}}{h} &  + \frac{ \sbf^m_{t,3}}{h} +\gbf^m_{t,3} & \ddots  & 0  \vspace{15pt}  \\
\vdots& \ddots& %\ddots &
 \ddots & 0  \vspace{15pt}   \\
0& \cdots& %0 &
 -\frac{\sbf^{{d}}_{t,{{d}}}}{h} &   + \frac{ \sbf^{{d}}_{t,{{d}}}}{h} +\gbf^{{d}}_{t,{{d}}}
\end{matrix}\right).\vspace{5pt}
\end{equation}
%For a given initial condition $u^{{d}}_0$,
Now, the solution to \eqref{eq:linear_SDE_SPDE} can be written as
\begin{equation}
\bar{u}^{{d}}_t = X^{{d}}_t \bar{u}^{{d}}_0, \qquad t\geq 0,
\end{equation}
where $X^{{d}}$ is in turn the solution to the $\mathcal{M}^{{{d}}\times {{d}}}$-valued It\^o SDE
\begin{equation}\label{eq:SDE_linear_bis}
\begin{cases}
 \dd X^{{d}}_t = A^{{d}}_t X^{{d}}_t \dd t + B^{{d}}_t X^{{d}}_t \dd W_t,\\
 X^{{d}}_0=I_{{{d}}}.
\end{cases}
\end{equation}

\begin{remark}
The components of $X^{{d}}$ %to \eqref{eq:SDE_linear_bis}
can be regarded as approximations of the integrals of the fundamental solution of the SPDE in
\eqref{eq:SPDE}, when it exists, on each sub-interval {$[\frac{1}{2}(x^{{d}}_{j-1}+x^{{d}}_{j}), \frac{1}{2}(x^{{d}}_{j}+x^{{d}}_{j+1})]$}, namely
\begin{equation}\label{eq:int_fund_sol}
(X^{{d}}_t)_{i,j}  \approx    \int%\limits
_{\frac{1}{2}(x^{{d}}_{j-1}+x^{{d}}_{j})}^{\frac{1}{2}(x^{{d}}_{j}+x^{{d}}_{j+1})}
p\big(t,x^{{d}}_i;0,\xi\big) \dd \xi  =:  (\mathcal{I}^{{d}}_t)_{i,j}, \qquad i,j =
1,\dots,{{d}}.%,\qquad t>0.
\end{equation}
\end{remark}

%\subsection{A numerical test}\label{sec:spde_numerics}
%\input{Numerics/SPDE/SPDE_commands.tex}

\begin{example}\label{sec:spde_numerics}
We consider a special case of \eqref{eq:SPDE} % for our numerical test, namely
with $\abf_t\equiv \abf >0$, $\bbf,\cbf,\gbf \equiv 0$ and $\sbf_t\equiv \sbf > 0$. Hence, we
consider the stochastic heat equation
\begin{equation}%\label{eq:SPDE}
\dd u_t = \frac{\abf}{2} \partial_{xx}u_t(x)   \dd t + \sbf \partial_x u_t(x) \dd W_t, \qquad t> 0,\ x\in \R,
\end{equation}
with $\abf>\sbf^2$, whose stochastic fundamental solution is given explicitly by
\begin{equation}\label{eq:exp_fund_sol}
 p(t,x;0,\xi)\coloneqq \frac{1}{\sqrt{2\pi (\abf- \sbf^2)t%(t-s)
 }}  \exp\bigg(   - \frac{(   x + \sbf W_t%(W_t - W_s)
 - \xi   )^2}{2 (\abf- \sbf^2)t %(t-s)
 }    \bigg),
 \qquad t>0%s
 ,\ x,\xi \in\R.
\end{equation}
%For a given $m\in \N$, the solution to \eqref{eq:SDE_linear_bis} must be interpreted as an approximation of the fundamental solution, in the sense that
%\begin{equation}
%(X^m_t)_{i,j}  \approx  \int_{x^m_{j-1}}^{x^m_j} p(0,\xi;t,x^m_i) \dd \xi, \qquad i,j \in \{ 1,\cdots,m-1  \},\qquad t>0.
%\end{equation}
%x^m_i = a + i h
The matrices $A_t^{{d}}$ and $B_t^{{d}}$ in \eqref{eq:SDE_linear_bis} now read as
\begin{equation}
A^{{d}}_t \equiv\frac{\abf}{h^2}\left( \begin{matrix} -1  & \frac{1}{2}    & \cdots & 0
\vspace{0pt}  \\ \frac{1}{2}& - 1 &  \ddots & \vdots  \vspace{0pt}  \\
%0& \frac{1}{2}& -1 & \ddots  & 0  \vspace{0pt}  \\
\vdots& \ddots&  \ddots & \frac{1}{2}   \vspace{0pt}   \\
0& \cdots&  \frac{1}{2}  &   -1   \\
\end{matrix}\right), \qquad
B^{{d}}_t \equiv\frac{\sbf}{h}\left( \begin{matrix} 1  & 0     & \cdots & 0  \vspace{0pt}  \\ -1&
1   & \ddots & \vdots  \vspace{0pt}  \\
%0& -1 & 1 & \ddots  & 0  \vspace{0pt}  \\
\vdots& \ddots&  \ddots & 0   \vspace{0pt}   \\
0& \cdots&  -1  &   1   \\
\end{matrix}\right).
\end{equation}
In particular, they do not commute and are constant for fixed $d$.

\begin{table}[!ht]
\centering
    \caption{SPDE. Values of $\Eb[\text{Err}^{{d}}_t]$ (in percentage) for
    \protect\UseVerb{m1} and \protect\UseVerb{m3}, with ${d}=50$, obtained with $50$ independent samples. Parameters as in \eqref{eq:parameters_spdes}.
}
\begin{tabular}{cccccc}
Method                                  &  $t=0.1$  & $t=0.2$  & $t=0.3$  & $t=0.4$  & $t=0.5$  \\
\hline
\protect\UseVerb{euler}  & 9.1364\,\%  & 5.5467\,\% & 5.3231\,\% & 4.8377\,\% & 4.5829\,\% \\
\protect\UseVerb{m1}   & 8.0746\,\%  & 5.3243\,\% & 4.9617\,\% & 4.7273\,\% & 5.3065\,\% \\
%\protect\UseVerb{m2}    & 9.1337\%  & 5.5296\% & 5.3311\% & 4.8315\% & 4.5700\% \\
\protect\UseVerb{m3}    & 9.1337\,\%  & 5.5296\,\% & 5.3310\,\% & 4.8314\,\% & 4.5704\,\%
\end{tabular}
 \label{tab:spde1}
\end{table}
\begin{table}[!ht]
\centering
    \caption{SPDE. Values of $\Eb[\text{Err}^{{d}}_t]$ (in percentage) for
    \protect\UseVerb{m1} and \protect\UseVerb{m3}, with ${d}=100$, obtained with $50$ independent samples. Parameters as in \eqref{eq:parameters_spdes}.
}
\begin{tabular}{cccccc}
Method                                  &  $t=0.1$  & $t=0.2$  & $t=0.3$  & $t=0.4$  & $t=0.5$  \\
\hline
\protect\UseVerb{euler}  & 4.2053\,\%  & 3.4600\,\% & 2.8214\,\% & 2.3524\,\% & 2.0370\,\% \\
\protect\UseVerb{m1}     & 4.4452\,\%  & 5.1232\,\% & 4.7061\,\% & 4.8807\,\% & 4.8397\,\% \\
%\protect\UseVerb{m2}    & 4.2576\%  & 3.4543\% & 2.8172\% & 2.3598\% & 2.0968\% \\
\protect\UseVerb{m3}    & 4.2576\,\%  & 3.4543\,\% & 2.8172\,\% & 2.3598\,\% & 2.0467\,\%
\end{tabular}
 \label{tab:spde2}
\end{table}
\begin{table}[!ht]
    \caption{SPDE. Values of $\Eb[\text{Err}^{{d}}_t]$ (in percentage) for
    \protect\UseVerb{m1} and \protect\UseVerb{m3}, with ${d}=200$, obtained with $50$ independent samples. Parameters as in \eqref{eq:parameters_spdes}.
}
\centering
\begin{tabular}{cccccc}
Method                                  &  $t=0.1$  & $t=0.2$  & $t=0.3$  & $t=0.4$  & $t=0.5$  \\
\hline
\protect\UseVerb{euler}  & 2.1832\,\%  & 1.4403\,\% & 1.4190\,\% & 1.2174\,\% & 1.1532\,\% \\
\protect\UseVerb{m1}    & 4.9891\,\%  & 5.0444\,\% & 4.8249\,\% & 5.0042\,\% & 5.2603\,\% \\
%\protect\UseVerb{m2}    & 2.1690\%  & 1.4366\% & $\inf$   & $\inf$   & $\inf$ \\
\protect\UseVerb{m3}   & 2.1690\,\%  & 1.4364\,\% & 1.4467\,\% & 1.2140\,\% & 1.1420\,\%
\end{tabular}
\label{tab:spde3}
\end{table}

In the next numerical test we compare the approximate solutions to \eqref{eq:SDE_linear_bis},
obtained with the stochastic ME {in the special case of constant coefficients \eqref{eq:ab_const_y3}}, with the $\mathcal{M}^{{{d}}\times{{d}}}$-valued stochastic
process $\mathcal{I}^{{d}}$, whose components are given by the integral in \eqref{eq:int_fund_sol}
with $p$ as in \eqref{eq:exp_fund_sol}. In doing this, we shall keep in mind that the difference
between the latter quantities can be decomposed into two errors, namely: the one between $X^{{d}}$
and its approximation, and the one between $X^{{d}}$ and ${\mathcal{I}^d}$. In turn, the latter is
the result of both space-discretization and the error that stems by imposing null boundary
conditions (see \eqref{ae10}). In particular, this last error cannot be reduced by refining the
space-grid. Therefore, the analysis should be restricted to the ``central" components of
$\mathcal{I}^{{d}}$, namely those which do not depend on the values of the fundamental solution in
the vicinity of the boundary $\{a,b\}$. This motivates the definition that follows. For a given
$\kappa \in \N$ with $\kappa < {d}$, and a given approximation $X^{{{d}},\text{app}}$ of
$X^{{{d}}}$, we define the process
                \begin{align}\label{eq:err_spde_def}  %\hspace{-15pt}
            \text{Err}^{{d}}_t \coloneqq     %\frac{\Delta}{t}
    %                \sum_{k=1}^N
                                                \frac{\fnorm{ \tilde{\mathcal{I}}^{{{d}},\kappa}_{t}- \tilde{X}^{{{d}},\kappa,\text{\text{app}}}_{t}} }{\fnorm{ \tilde{\mathcal{I}}^{{{d}},\kappa}_{t}}}
%                                                \approx
%             \frac{1}{t}
%                \int_{0}^{t}
%                {
%                     \frac{
%                        \big| X^{\text{ref}}_s  -  X^{\text{\text{app}}}_s   \big|}{\big| X^{\text{ref}}_s  \big|}
%                }               \dd s  \qquad  \text{with $\Delta=\frac{t}{N}$}
                , \qquad t>0,
                \end{align}
where $\tilde{\mathcal{I}}^{{{d}},\kappa}_{t}$ and $\tilde{X}^{{{d}},\kappa,\text{\text{app}}}_{t}$ are the projections on $\mathcal{M}^{\kappa\times {{d}}}$ obtained by selecting the central $\kappa$ rows %and $\kappa$ columns
of $\mathcal{I}^{{{d}}}_{t}$ and $X^{{{d}},\text{\text{app}}}_{t}$, respectively. The matrix norm
above is the Frobenius norm. The role of $X^{{{d}},\text{app}}$ will be played by the
time-discretization of the truncated ME \eqref{eq:logarith_real_2}-\eqref{eq:convergence}. In
particular, we will denote by \verb+m1+ and \verb+m3+ the discretized first and third-order MEs of
$X^{{d}}$, respectively. We will not consider the second-order Magnus approximation $\verb+m2+$ as
it appears less stable than the others. Note that, being $A^{{d}}_t$ and $B^{{d}}_t$ constant
matrices, the first three terms of the ME are given explicitly by \eqref{eq:ab_const_y3}.

In the numerical experiments we set
\begin{equation}\label{eq:parameters_spdes}
a=-2,\ b=2,\quad {\bf a}=0.2,\quad \sbf =0.15 .
\end{equation}
Setting the parameter $\kappa$ in \eqref{eq:err_spde_def}, which determines the number of rows
that are taken into account to asses the error,  as $\kappa = \floor*{ {{d}}/2 } $, we study the
expectation of $\text{Err}^{{{d}}}_t$ up to $t=0.5$. Such choice for $\kappa$ and $t$ allows us to
study the error in a region that is suitably away from the boundary. Indeed, choosing $\kappa$ as
above implies $x^{{d}}_i$ in \eqref{eq:int_fund_sol} ranging roughly from $-1$ to $1$. On the
other hand, the standard-deviation parameter associated to the Gaussian density
\eqref{eq:exp_fund_sol} at $t=0.5$ is roughly $0.30$, while the mean parameter is $0.15\times
W_{0.5}$, whose standard deviation is in turn roughly $0.10$. Therefore, both
$(\tilde{\mathcal{I}}^{{{d}},\kappa}_{t})_{i,1}$ and
$(\tilde{\mathcal{I}}^{{{d}},\kappa}_{t})_{i,{{d}}}$ are likely to be very close to zero, thus
meeting the null boundary condition implied by \eqref{ae10}.

In Tables \ref{tab:spde1}, \ref{tab:spde2}, \ref{tab:spde3}, we report the approximate values of
$\Eb[\text{Err}^{{d}}_t]$ for ${{d}}=50, 100$ and $200$, respectively. These were obtained via
simulation of $50$ %\blu{\sout{independently-sampled}}
trajectories of $W$ %\blu{\sout{, discretized}} %respectively. In every simulation the trajectories have been discretized
with time step-size $\Delta = 10^{-4}$. {%\color{red}
Now, let us inspect the Tables
\ref{tab:spde1}--\ref{tab:spde3} in more detail. As a reminder, these results were obtained by
using the \verb+exact+ solution as a reference, which is available in this particular example.
In Table \ref{tab:spde1} it is noticeable that \verb+euler+ and \verb+m3+ can exhibit worse
results for small times compared to \verb+m1+. This is due to the coarse space approximation with
only $52$ space grid points.
Increasing the number $d$ of grid points improves the error of \verb+euler+ and \verb+m3+ for all
displayed times, which can be seen in Tables \ref{tab:spde2} and \ref{tab:spde3} by comparing each
column for the same final time. % in all tables. %Another notable feature can be seen when comparing \verb+euler+ and \verb+m3+ in all three tables.
%The third-order Magnus expansion has the same magnitude of error as the \verb+euler+ scheme for
%all final times.
Finally, notice that the third-order Magnus expansion has the same magnitude of error as the
\verb+euler+ scheme for all final times.}
\end{example}

\subsection{Example: $j=1$, $B=0$ and $A_t$ upper triangular.}\label{subsec:Auppertriang}
We now test the ME on an
%{Now, we consider an example, where we want to apply the ME to an
SDE with time-dependent coefficients and with known explicit solution. %}
%\vspace{5pt}
%\paragraph*{\bf Example: $j=1$, $B=0$ and $A_t$ upper triangular.$\ {}$}
%\input{Numerics/B0_fix/B0_fix_commands.tex}
%\input{Numerics/B0_var/B0_var_commands.tex}
    %We s
    Set% $B\equiv 0$ and
    \begin{equation}\label{eq:AB_upper_triangular}
A_t= \left( \begin{matrix} 2 & t \\ 0 & -1
\end{matrix}\right), \qquad B_t\equiv 0.
\end{equation}
    In this case \eqref{eq:SDE_linear_b} admits an explicit solution, which can be obtained by using It\^o's formula, given by
%    given by{\color{magenta}, which can be obtained by using Yoeurp and Yor's formula
    \begin{equation}
    X_t=
    \left(
        \begin{array}[c]{cc}
            %X11
            e^{
                    2\left(W_t - t\right)
            }
            &
            %X12
            e^{
                    2\left(W_t - t\right)
            }\left(
                    \int_{0}^{t}{
                            s\, e^{
                                    -3 W_s + \frac{3}{2} s
                            }\,
                            dW_s
                    }-2
                    \int_{0}^{t}{
                            s\, e^{
                                    -3 W_s + \frac{3}{2} s
                            }\,
                            ds
                    }
            \right)
            \\
            %X21
            0
            &
            %X22
            e^{
                    -(W_t + \frac{1}{2}t)
            }
        \end{array}
    \right).
\end{equation}
%}
The first three terms of the ME read as
\begin{align}
    \Mlog_t^{(1)}& =
    %-\frac{1}{2}
                    %\left[
                            %\begin{array}[c]{cc}
                                    %4t
                                        %& \frac{1}{2}t^2\\
                                    %0 & 1t
                            %\end{array}
                    %\right]+
                    \left(
                            \begin{array}[c]{cc}
                                    2 W_t
                                    & tW_t-\int_{0}^{t}{W_s ds}\\
                                    0       & - W_t
                            \end{array}
                    \right),
\\
    \Mlog_t^{(2)}&=
    -\frac{1}{2}
                    \left(
                            \begin{array}[c]{cc}
                                    4t
                                        & \frac{1}{2}t^2\\
                                    0 & t
                            \end{array}
                    \right)
                    -\frac{3}{2}
                            \left(
                                    \begin{array}[c]{cc}
                                            0 & W_t\int_{0}^{t}{W_u du}-
                                                            \int_{0}^{t}{W_u^2 du}\\
                                            0       & 0
                                    \end{array}
                            \right),
\\                  \Mlog_t^{(3)}&=
                    %\left[
                            %\begin{array}[c]{cc}
                                    %0 & \frac{15}{16}\left(\int_{0}^{t}{
                                                            %W_s
                                                            %ds
                                                            %}\right)^2-\frac{7}{8}\left(
                                                            %t
                                                                    %\int_{0}^{t}{W_s^2 ds}-
                                                            %\int_{0}^{t}{sW_s^2 ds}
                                                            %\right)-\frac{1}{24}
                                                            %t^3\\
                                    %0 & 0
                            %\end{array}
                    %\right]\\&+
                    \left(
                            \begin{array}[c]{cc}
                                    0 & \frac{3}{4}(
                                                    t-W_t^2)\int_{0}^{t}
                                                                    W_u
                                                                    du
                                                            -
                                                            \frac{3}{2}
                                                            \int_{0}^{t}{
                                                                    W_u
                                                                    u
                                                                    du
                                                            }
                                    \\
                                    0 & 0
                            \end{array}
                    \right)+
                                        \left(
                            \begin{array}[c]{cc}
                                    0 &
                                    \frac{9}{4}
                                                    W_t\int_{0}^{t}{
                                                            W_u^2du
                                                    }-\frac{3}{2}
                                                    \int_{0}^{t}{
                                                            W_u^3 du
                                                    }
                                    +
                                    \frac{3}{8}
                                            t^2W_t\\
                                    0 & 0
                            \end{array}
                    \right).
            \end{align}
Again, all the stochastic integrals appearing in the ME can be solved in terms of Lebesgue
integrals by using It\^o's formula, which allows us to
use one more time a sparser time grid compared to the Euler method and the discretized exact solution. %, for which the discretization of stochastic integrals is necessary.
{In the following numerical tests, we discretize in time with mesh $\Delta$ equal to
$10^{-4}$ for \verb+exact+ and equal to $10^{-2}$ for \verb+m1+, \verb+m2+ and
\verb+m3+. For \verb+euler+ we run two experiments with mesh equal to $10^{-4}$ and $10^{-3}.$} {Note that \verb+euler+ serves here as an alternative approximation and that choosing
a finer time-discretization for \verb+euler+ and \verb+exact+ (our reference method here) is again
essential in order to make them comparable with \verb+m3+.

In Table \ref{tab:B0_fix} we show the expectations $E[\text{Err}_t]$ for different values of $t$,
with \verb+exact+ as benchmark solution, computed via Monte Carlo simulation with $10^{3}$
samples. The same samples are used in Figure \ref{fig:B0_fixcds} to plot the empirical CDF of
$\text{Err}_t$. {It is clear from the results that the time-step size $\Delta=10^{-3}$ is not small enough in order for \verb+euler+ to yield accurate results. Also note that \verb+m3+ outperforms \verb+euler+ with $\Delta=10^{-4}$ up to $t=0.75$.
%Let us direct our focus for the moment on the columns for $t=0.25$ up to $t=1$. We can see that \verb+m3+ is more accurate than \verb+euler+ in the case $\Delta=10^{-3}$, justifying the usage of a finer time grid for \verb+euler+. But even for \verb+euler+ with a finer step size equal to $\Delta=10^{-4}$, \verb+m3+ outperforms \verb+euler+ till $t=0.75$.
}
\begin{table}
\centering
    \caption{$B_t$ and $A_t$ as in \eqref{eq:AB_upper_triangular}. Values of $E[\text{Err}_t]$ (in percentage) for
    \protect\UseVerb{euler}, \protect\UseVerb{m1}, \protect\UseVerb{m2}, \protect\UseVerb{m3}, with \protect\UseVerb{exact} %, using $\Delta=10^{-4}$,
    as benchmark solution, obtained with $10^3$ samples.
}
\begin{tabular}{*{7}{c}}
Method  &
        $t=0.25$    & $t=0.5$     & $t=0.75$    & $t=1$      & $t=2$          &  $t=3$    \\
\hline
\protect\UseVerb{euler} $\Delta=10^{-4}$ &
        $0.503\,\%$ & $0.716\,\%$ & $0.883\,\%$ & $1.08\,\%$ & $2.07\,\%$ & $3.3\,\%$ \\
\protect\UseVerb{euler} $\Delta=10^{-3}$ &
        $1.56\,\%$  & $2.12\,\%$  & $2.56\,\%$  & $2.92\,\%$ & $4.16\,\%$ & $5.6\,\%$ \\
\protect\UseVerb{m1} $\Delta=10^{-2}$ &
        $19.6\,\%$  & $42.9\,\%$  & $70.8\,\%$  & $108\,\%$  & $409\,\%$  & $1490\,\%$ \\
\protect\UseVerb{m2} $\Delta=10^{-2}$ &
        $0.147\,\%$ & $0.659\,\%$ & $1.71\,\%$  & $3.36\,\%$ & $11.3\,\%$ & $20.6\,\%$ \\
\protect\UseVerb{m3} $\Delta=10^{-2}$ &
        $0.117\,\%$ & $0.309\,\%$ & $0.77\,\%$  & $1.61\,\%$ & $6.44\,\%$ & $13.9\,\%$ \\
\end{tabular}
\label{tab:B0_fix}
\end{table}
\begin{table}[!ht]
\centering
    \caption{Computational times (seconds) for $10^3$ sampled trajectories of $X$, up to time $t=1$,
        choosing $\Delta %= \sqrt{10^{-4}}
        = 10^{-2}$ for
    \protect\UseVerb{m1}, \protect\UseVerb{m2} and \protect\UseVerb{m3}, and
        $\Delta = 10^{-4}$ for \protect\UseVerb{euler} and \protect\UseVerb{exact}.
}
\begin{tabular}{@{}*{4}{c}}
\multicolumn{4}{c}{$B_{t}$ and $A_t$ as in \eqref{eq:AB_upper_triangular}}\\
\multicolumn{4}{c}{}\\ \text{Method} &\text{Log} &\text{Matrix Exp} &\text{Total}\\ \hline
\protect\UseVerb{exact}  & 0 & 0 & 0.70544 \\
\protect\UseVerb{euler}  & 0 & 0 & 4.59658 \\
\protect\UseVerb{m1}  & 0.0186238 & 0.51252 & 0.531143 \\
\protect\UseVerb{m2}  & 0.0245188 & 0.517689 & 0.542207 \\
\protect\UseVerb{m3}  & 0.0441915 & 0.530973 & 0.575165 \\
\end{tabular}%
\begin{tabular}{|*{3}{c}@{}}
\multicolumn{3}{|c}{$B_{t}$ and $A_t$ as in \eqref{eq:AB_upper_triangular}}\\
\multicolumn{3}{|c}{normalized by its spectral norm}\\ \text{Log} &\text{Matrix Exp}
&\text{Total}\\ \hline
0 & 0 & 0.807434 \\
0 & 0 & 4.53311 \\
0.0128825 & 0.522425 & 0.535308 \\
0.0557674 & 0.527644 & 0.583412 \\
0.212351 & 0.471555 & 0.683906 \\
\end{tabular}
\label{tab:ctimes_B0_T1_d2_N1000_M1000}
\end{table}

\begin{figure}
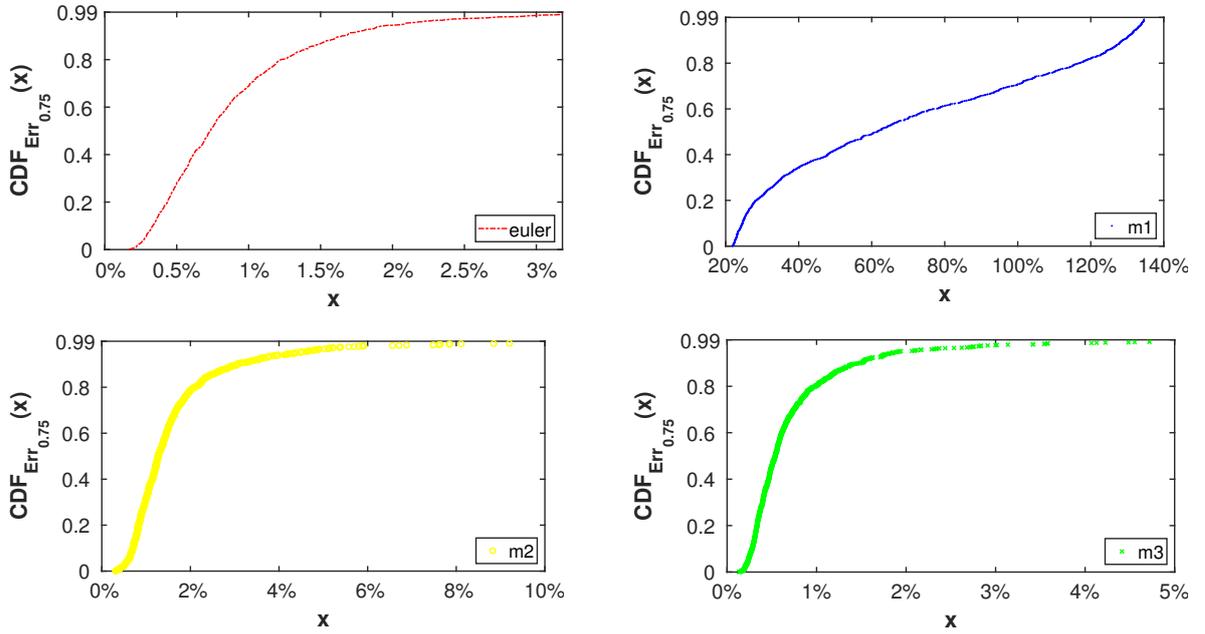
%
    \begin{center}
    \begin{minipage}[c][][c]{.47\linewidth}
        \BfixCDSeuler%
    \end{minipage}\hfill
    \begin{minipage}[c][][c]{.47\linewidth}
        \BfixCDSmone%
    \end{minipage}
    \end{center}
    \begin{center}
    \begin{minipage}[c][][c]{.47\linewidth}
        \BfixCDSmtwo%
    \end{minipage}\hfill
    \begin{minipage}[c][][c]{.47\linewidth}
        \BfixCDSmthree%
    \end{minipage}
    \end{center}
\caption{$B\equiv 0$ and $A_t$ as in \eqref{eq:AB_upper_triangular}. Empirical CDF of $\text{Err}_t$, at
$t=0.75$, for
\protect\UseVerb{euler}, \protect\UseVerb{m1}, \protect\UseVerb{m2}, \protect\UseVerb{m3}, with \protect\UseVerb{exact} as benchmark solution, obtained with $10^3$ samples.}%
\label{fig:B0_fixcds}%
\end{figure}

%The computational time for $10^3$ sampled trajectories, up to time $t=1$, is approximately $9.1$
%seconds for \verb+exact+, $48$ seconds for \verb+euler+ and $40$ seconds for either \verb+m1+,
%\verb+m2+ or \verb+m3+. The latter, however, is divided as follows: nearly $0.1$ seconds to
%compute the %\blu{
%ME %\sout{approximate logarithm}}

{
The computational time for $10^3$ sampled trajectories, up to time $t=1$, which is given in
Table \ref{tab:ctimes_B0_T1_d2_N1000_M1000}, is approximately $0.7$
seconds for \verb+exact+, $4.6$ seconds for \verb+euler+ and $0.6$ seconds for either \verb+m1+,
\verb+m2+ or \verb+m3+. The latter, however, is divided as follows: nearly $0.05$ seconds to
compute the %\blu{
ME %\sout{approximate logarithm}}
and nearly $0.55$ seconds to compute the matrix exponential with the Matlab function \verb+expm+.
{%\color{red}
Let us recall Remark \ref{rem:compTimes}} and note that the computation of the logarithm via ME is very fast thanks to the possibility
of parallelizing the computation of the integrals in $Y^{(1)}, Y^{(2)}$ and $Y^{(3)}$.
}

As it appears in the results above, the accuracy of the ME quickly deteriorates as the time
increases. This is largely due the fact that the spectral norm
\begin{equation}
\snorm{A_t} = \sqrt{\frac{1}{2} \left(t^2+\sqrt{t^4+10 t^2+9}+5\right)}
\end{equation}
is an increasing function of $t$. This behavior shall not come as a surprise, since the proof of
Theorem \ref{th:convergence} already uncovered the relation between the convergence time $\tau$
and the spectral norms of $A_t$ and $B_t$. Such relation is also consistent with the convergence
condition \eqref{eq:converg_cond_determ} that holds in the deterministic case. In order to asses
numerically the impact of the spectral norm of $A_t$ on the quality of the Magnus approximation,
we now repeat the experiments on the equation obtained by normalizing $A_t$ as in
\eqref{eq:AB_upper_triangular} with respect to $\snorm{ A_t}$. As it turns out, the accuracy of
\verb+m1+, \verb+m2+ and \verb+m3+ improves considerably with this normalization.
%$\|A_t \|$ and $\|B_t\|$.
Note that, in this case, \eqref{eq:SDE_linear_b} no longer admits a closed-form solution, while
the representation for the terms $Y^{(1)}, Y^{(2)}$ and $Y^{(3)}$ in the ME is omitted for it
becomes rather tedious to write. In Figure \ref{fig:B0_var} we plot one realization of the
trajectories of the top-right component $(X_t)_{12}$, computed with all the methods above, up to
time $t=10$. In this case we did not plot a diagonal component of the solution because the latter
are exact for \verb+m2+ and \verb+m3+, up to discretization errors of Lebesgue integrals.
%\ref{tab:B0_var}.\\
\begin{figure}[h]
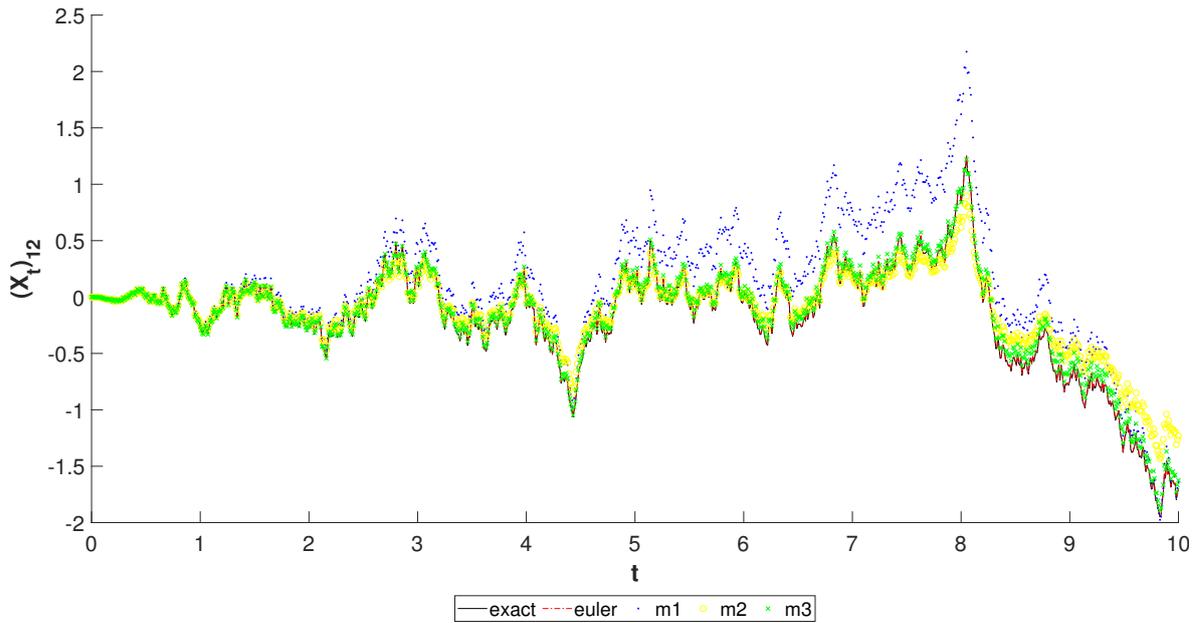
%
\BvarTrajectory
\caption{$B_{t}$ and $A_t$ as in \eqref{eq:AB_upper_triangular} normalized by its spectral norm. One realization of the trajectories of the top-right component $(X_t)_{12}$, computed with \protect\UseVerb{exact}, \protect\UseVerb{euler}, \protect\UseVerb{m1}, \protect\UseVerb{m2}, \protect\UseVerb{m3}.}%
\label{fig:B0_var}%
\end{figure}
\begin{table}
\centering
    \caption{$B_{t}$ and $A_t$ as in \eqref{eq:AB_upper_triangular} normalized by its spectral norm. Values of $E[\text{Err}_t]$ (in percentage) for
    \protect\UseVerb{euler}, \protect\UseVerb{m1}, \protect\UseVerb{m2}, \protect\UseVerb{m3}, with \protect\UseVerb{exact}, using $\Delta=10^{-4}$, as benchmark solution, obtained with $10^3$ samples.
}
\begin{tabular}{*{8}{c}}
Method  &
        $t=0.25$     & $t=0.5$      & $t=0.75$    & $t=1$       & $t=2$       &  $t=3$      & $t=10$\\
\hline \protect\UseVerb{euler} $\Delta=10^{-4}$ &
        $0.134\,\%$  & $0.191\,\%$  & $0.227\,\%$ & $0.263\,\%$ & $0.382\,\%$ & $0.489\,\%$ & $0.715\,\%$ \\
\protect\UseVerb{euler} $\Delta=10^{-3}$ &
        $0.416\,\%$  & $0.572\,\%$  & $0.691\,\%$ & $0.777\,\%$ & $0.965\,\%$ & $1.12\,\%$  & $1.35\,\%$\\
\protect\UseVerb{m1} $\Delta=10^{-2}$ &
        $4.69\,\%$   & $9.25\,\%$   & $13.6\,\%$  & $18.1\,\%$  & $34\,\%$    & $47\,\%$    & $90.9\,\%$\\
\protect\UseVerb{m2} $\Delta=10^{-2}$ &
        $0.047\,\%$  & $0.109\,\%$  & $0.263\,\%$ & $0.529\,\%$ & $2.01\,\%$  & $3.92\,\%$  & $15.2\,\%$\\
\protect\UseVerb{m3} $\Delta=10^{-2}$ &
        $0.0537\,\%$ & $0.0821\,\%$ & $0.126\,\%$ & $0.201\,\%$ & $0.642\,\%$ & $1.28\,\%$  & $5.31\,\%$\\
\end{tabular}
\label{tab:B0_var}
\end{table}
Table \ref{tab:B0_var} and Figure \ref{fig:B0_varcds} are analogous to Table \ref{tab:B0_fix} and
Figure \ref{fig:B0_fixcds} and are obtained again with $10^3$ independent samples.
%\begin{figure}%
%\Bvarcdsplot%
%\caption{CDS plot for $B\equiv 0$ and $A_t$ normalized}%
%\label{fig:B0_varcds}%
%\end{figure}
\begin{figure}
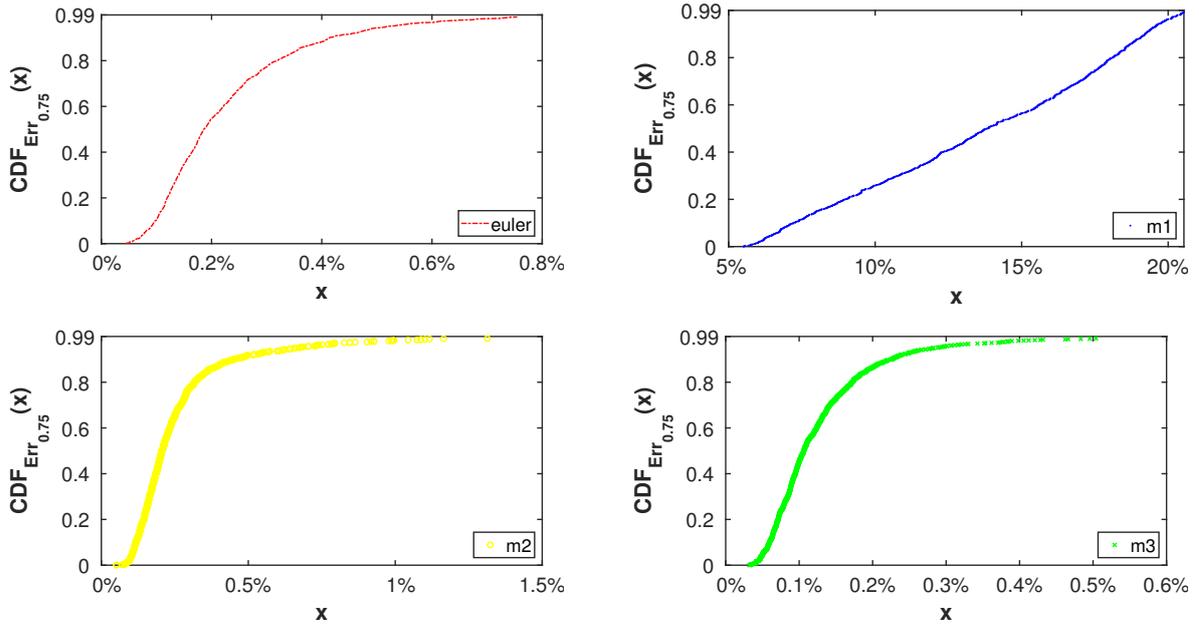
%
    \begin{center}
    \begin{minipage}[c][][c]{.47\linewidth}
        \BvarCDSeuler%
    \end{minipage}\hfill
    \begin{minipage}[c][][c]{.47\linewidth}
        \BvarCDSmone%
    \end{minipage}
    \end{center}
    \begin{center}
    \begin{minipage}[c][][c]{.47\linewidth}
        \BvarCDSmtwo%
    \end{minipage}\hfill
    \begin{minipage}[c][][c]{.47\linewidth}
        \BvarCDSmthree%
    \end{minipage}
    \end{center}
\caption{$B_{t}$ and $A_t$ as in \eqref{eq:AB_upper_triangular} normalized by its spectral norm.
Empirical CDF of $\text{Err}_t$, at $t=0.75$, for
\protect\UseVerb{euler}, \protect\UseVerb{m1}, \protect\UseVerb{m2}, \protect\UseVerb{m3}, with \protect\UseVerb{exact} as benchmark solution, obtained with $10^3$ samples.}%
\label{fig:B0_varcds}%
\end{figure}
{The computational times, reported in Table \ref{tab:ctimes_B0_T1_d2_N1000_M1000}, are comparable with those of the non normalized case. The same can said about the accuracy results reported in Table \ref{tab:B0_var}, which are comparable with those in Table \ref{tab:B0_fix}.}

\appendix

%%% The Appendices part is started with the command \appendix;
%%% appendix sections are then done as normal sections
%\appendix
%\section{Derivatives of matrix exponentials}\label{sec:lem_exp}

\section*{Declarations}
\subsection*{Funding}
This project has received funding from the European Union’s Horizon 2020 research and innovation
programme under the Marie Sklodowska-Curie grant agreement No 813261 and is part of the ABC-EU-XVA project.
\subsection*{Conflicts of interests}

The authors have no relevant financial or non-financial interests to disclose.

\subsection*{Data availability}
All data generated or analysed during this study are included in this published article.
{%\color{red}
In particular the code to produce the numerical experiments is available at\\
\url{https://github.com/kevinkamm/StochasticMagnusExpansion}.
}

{\thispagestyle{scrheadings}
%\nocite{*}
\newpage
\thispagestyle{scrheadings}\ihead{}
\begin{footnotesize}
\bibliographystyle{acm}%{chicago}
\bibliography{bib}
\end{footnotesize}
}

\end{document}